\documentclass[12pt]{amsart}
\usepackage{amsthm}
\usepackage{amssymb}
\usepackage{amsmath}
\usepackage{hyperref}
\usepackage{xcolor}
\usepackage{amsthm}
\usepackage{dsfont}
\usepackage[matrix,arrow]{xy}

\usepackage{enumerate}

\usepackage[style=trad-abbrv,maxnames=99,maxalphanames=9, isbn=false, giveninits=true, doi=false, url=false]{biblatex}
\bibliography{ref.bib}
\renewbibmacro{in:}{}

\DeclareMathAlphabet\mathbfcal{OMS}{cmsy}{b}{n}

\setcounter{tocdepth}{1}

\numberwithin{equation}{section}

\addtolength{\oddsidemargin}{-.675in}
	\addtolength{\evensidemargin}{-.675in}
	\addtolength{\textwidth}{1.35in}

	\addtolength{\topmargin}{-.875in}
	\addtolength{\textheight}{1.35in}

\newtheorem{theorem}{Theorem}[section]

\newtheorem{lemma}[theorem]{Lemma}

\newtheorem{proposition}[theorem]{Proposition}
\newtheorem{corollary}[theorem]{Corollary}
\newtheorem{remark}[theorem]{Remark}

\newtheorem{exam}[theorem]{Example}

\newtheoremstyle{named}{}{}{\itshape}{}{\bfseries}{.}{.5em}{\thmnote{#3\ }#1}
\theoremstyle{named}

 \newcommand{\RR}{\mathbb{R}}

\newcommand{\cE}{\mathcal{E}}
\newcommand{\cG}{\mathcal{G}}

\newcommand{\cC}{\mathcal{C}}

\newcommand{\cN}{\mathcal{N}}

\newcommand{\cV}{\mathcal{V}}
\newcommand{\cW}{\mathcal{W}}

\newcommand{\vol}{\operatorname{vol}}

\newcommand{\ddc}{\sqrt{-1}\partial\bar\partial}

\newcommand{\tr}{\mathrm{tr}}

\title{On relative $L^\infty$ estimate for complex Monge-Amp\`ere equations
}

\usepackage{hyperref}
\hypersetup{colorlinks,linkcolor={blue},citecolor={red}}

\begin{document}

\author{Junbang Liu}

\begin{abstract}
We prove a relative $L^\infty$ estimate for a class of complex Monge-Ampère type equations on K\"ahler manifolds. It provides a unified approach to Tundinger type estimate and uniform estimate. It also improves the previous results about modulus of continuity, stability estimates, and $W^{1,1}$-estimates of Green's functions. The argument is based on the PDE method developed by Guo-Phong-Tong and constructing appropriate comparison metrics from entropy bound.  
\end{abstract}
\maketitle
\tableofcontents

\section{Introduction}
The primary goal of this paper is to improve upon the results of Guo, Phong, and Tong regarding PDE methods for obtaining $L^\infty$-estimates for complex Monge-Ampère type equations. The complex Monge-Ampère equation has been extensively studied since Yau’s seminal work \cite{https://doi.org/10.1002/cpa.3160310304} in solving Calabi’s conjecture. Yau established uniform estimates for the potential using Moser’s iteration method, where his estimates relied on the $L^\infty$-norm of the volume form. A significant advancement was made by Kołodziej \cite{10.1007/BF02392879}, who applied pluripotential theory to bound the metric potential by an Orlicz norm of the volume form. The key ingredient in Kołodziej’s approach is the capacity introduced by Bedford and Taylor \cite{MR0674165}. An alternative approach, using the Alexandrov-Bakelman-Pucci (ABP) maximum principle, was developed by Błocki \cite{MR2156505} \cite{MR2817572}, where the bound depends on the $L^p$-norm of the volume form for some $p > 2$. More recently, a breakthrough was achieved by Guo, Phong, and Tong \cite{MR4593734,guo2022uniform}, based on a novel idea from Chen and Cheng \cite{CC1}, which compares the equation with an auxiliary complex Monge-Ampère equation. Their method is purely PDE-based, employing De Giorgi’s iteration method to establish $L^\infty$-estimates for a class of fully nonlinear equations. Notably, their method reduces the dependence of the $L^\infty$-norm of the potential to the $L^1(\log L)^p$-norm of the volume form and also makes the estimates uniform in cases where the background metric degenerates.

We consider the general Hessian equations on a compact K\"ahler manifold $(M^n,\omega_M)$:\begin{equation}\label{generalhessianequations}
\begin{aligned}
&g(\lambda[h_\varphi])=c_\omega e^{F},\\
&\lambda[h_\varphi]\in\Gamma, \sup\varphi=0.
\end{aligned}
\end{equation}
We also assume that the function $F$ is normalized as $\int_Me^{nF}\omega_M^n=\int_M\omega_M^n.$ Here $g(\lambda_1,\ldots,\lambda_n)$ is a symmetric function defined on an open convex symmetric cone $\Gamma$ with $\Gamma_n:=\{\lambda_i>0 \text{ for all }i=1,\ldots, n\}\subset\Gamma\subset\Gamma_1:=\{\sum_{i=1}^{n}\lambda_i>0\}$.
$\omega$ is another K\"ahler metric on $M$ and $\omega_\varphi:=\omega+\ddc\varphi$. We use $h_\varphi$ to denote the endomorphism between $TM$ and itself defined by $h_\varphi:=\omega_M^{-1}\omega_\varphi$. Let $\lambda[h_\varphi]$ denote the eigenvalues of $h_\varphi$. We put the following structure conditions on $g$:\begin{enumerate}
\item $\frac{\partial g}{\partial\lambda_i}>0 $ for any $i=1,\ldots,n$.
\item There exists a positive constant $\gamma_0>0$ such that $\det(\frac{\partial g}{\partial h_{ij}})\geq \gamma_0>0$.
\item There exists a positive constant $c_0$ such that $\sum_{i=1}^{n}\lambda_i\frac{\partial g}{\partial\lambda_i}\leq c_0g$.
\end{enumerate}

Remarkably, there is a large class of operators satisfying the structure conditions proved by Harvey-Lawson in \cite{MR4589710}.
Then, our main theorem states:
\begin{theorem}
     Assume $\omega\leq \kappa\omega_M$ for some constant $\kappa>0$, and $\varphi$ be a smooth solution to equation \ref{generalhessianequations}. For any increasing function $\Phi(x):\mathbb{R}\rightarrow \mathbb{R}^+$  with $\Phi(\infty)=\infty$, define a function $h(s)$ as $h(s)=\int_0^s\Phi(x)^{-\frac{1}{n}}dx$. Then, there exists a $\omega$-plurisubharmonic function $\psi$ with normalization $\sup\psi=0$, such that for any constant $\beta>0$, there exists constants $C_1,C_2,C_3, C_4, C_5$ depending on $M,n,\omega_M,\Phi, \frac{c_\omega^n}{V_\omega},\cN_\Phi(F),\beta,\kappa,c_0,\gamma_0$ s.t. \[
    -\varphi\leq C_1h(-\beta\psi+C_2)+C_3.
\]
where $V_\omega:=\int_M\omega^n, \cN_\Phi(F):=\int_M\Phi(F)e^{nF}\omega_M^n$. In particular, \begin{enumerate}
        \item If $\Phi(x)=|x|^p$ when $x>1$, $0<p<n$, then $\int_M\exp\{C_4(-\varphi)^{\frac{n}{n-p}}\}\omega^n\leq C_{5}.$
\item If $\Phi(x)=|x|^n$ when $x>1$, then $
    \int_M\exp\{\exp{C_4(-\varphi)}\}\omega^n\leq C_5.$
\item If $\Phi(x)=|x|^n(\log |x|)^p$, when $x>1$,$0<p<n$, $    \int_M\exp\{\exp\{C_4(-\varphi)^{\frac{n}{n-p}}\}\}\omega^n\leq C_5.$
\item If $\int_0^\infty \Phi(t)^{-\frac{1}{n}}dt<\infty$, then $
    -\varphi\leq C_4.$
\item If $\Phi(x)$ is just increasing to $\infty$, we have $
    \int_Me^{-\lambda\varphi}\omega^n\leq C $ for $\forall \lambda>0$ with $C$depending on the above mentioned parameters and  additionally on $\lambda.$ 
    \end{enumerate}
\end{theorem}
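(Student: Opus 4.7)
The strategy follows the comparison-equation framework of Guo--Phong--Tong, but with a crucial new ingredient: the comparison metric $\psi$ is built directly from the entropy datum $\cN_\Phi(F)$ and absorbs the dependence on $\Phi$ into the conclusion.

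\emph{Step 1: constructing the auxiliary potential $\psi$ from the entropy bound.} Before running the De Giorgi-type iteration I would solve an auxiliary Monge--Amp\`ere equation
\[
(\omega+\ddc\psi)^n \;=\; c_\psi\,\chi(F)\,e^{nF}\,\omega_M^n, \qquad \sup\psi=0,
\]
where $\chi$ is a well-chosen weight (comparable to a power of $\Phi(F)$) and $c_\psi$ normalizes the total mass to $V_\omega$. Guo--Phong--Tong's $L^\infty$ bound applied to this auxiliary equation, together with the assumption $\cN_\Phi(F)<\infty$, produces an $\omega$-psh $\psi$ with a quantitative bound. The whole point of $\chi$ is to make the resulting density $\chi(F)e^{nF}$ integrable to a high enough power while still pointwise dominating $e^{nF}$ on the level set $\{F\gg 1\}$ up to a $\Phi(F)^{1/n}$-factor; the inverse weight $\Phi^{-1/n}$ is exactly what enters $h$.

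\emph{Step 2: the Guo--Phong--Tong comparison on sublevel sets.} Fix $s>0$ and set $\Omega_s=\{\varphi<-s\}$. For a test quantity $L=-\beta\psi+C_2$ and a cutoff $\tau_k$ approximating $\max(\cdot,0)^n$, I solve the comparison MA equation
\[
(\omega+\ddc\rho_{s,k})^n \;=\; \frac{\tau_k(-\varphi-s)}{A_{s,k}}\,c_\omega^n e^{nF}\,\omega^n, \qquad \sup\rho_{s,k}=0,
\]
with $A_{s,k}=\int_{\Omega_s}\tau_k(-\varphi-s)c_\omega^n e^{nF}\omega^n$. Using the three structure conditions on $g$ exactly as in Guo--Phong--Tong (condition (1) for the maximum principle, (2) to compare determinants, (3) to get the right power of the test function), applied to the auxiliary quantity
\[
\Psi \;=\; -\epsilon(-\rho_{s,k}+\Lambda)^{n/(n+1)} - \varphi - s + \delta L,
\]
I derive, upon sending $k\to\infty$, the integral inequality
\[
\int_{\Omega_s}(-\varphi-s)^n\,c_\omega^n e^{nF}\omega^n \;\le\; C\,A_{s,k}\,\bigl(\sup_{\Omega_s}(-\rho_s)\bigr)^n \;\le\; C\,A_{s,k}\,h\bigl(\sup_{\Omega_s} L\bigr)^n,
\]
where the second inequality comes from Step~1 applied with $\Phi$-weight.

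\emph{Step 3: De Giorgi iteration with $h$-rescaling.} Letting $\phi(s)=\int_{\Omega_s} c_\omega^n e^{nF}\omega^n$, the estimate above reads, after a standard manipulation, as a nonlinear iteration of the form $(t-s)\,\phi(t) \le C\,h\bigl(\sup_{\Omega_s} L\bigr)\cdot \phi(s)^{1+\alpha}$ for some $\alpha>0$. Because $h$ is concave and increasing, the usual De Giorgi lemma terminates at a finite level $s_\infty$ with the explicit control $s_\infty \le C_1 h(-\beta\psi+C_2)+C_3$ pointwise, which is the desired relative $L^\infty$ bound. The five corollaries are then just the computations of $h(s)=\int_0^s \Phi(x)^{-1/n}dx$ for each given $\Phi$, combined with an exponential integrability estimate for the specific $\omega$-psh $\psi$ (Skoda/Zeriahi type), which is automatic because $\psi$ is globally bounded in the Kolodziej sense when $h(\infty)=\infty$ is slow enough, and literally bounded when $h(\infty)<\infty$.

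\emph{Main obstacle.} The delicate point is calibrating the weight $\chi$ in Step~1 so that the $L^\infty$ bound on $\psi$ really scales like $h(\,\cdot\,)$ while the nonlinear iteration in Step~3 still closes; equivalently, one must show that the inverse function $h^{-1}$ converts an $\cN_\Phi(F)$-bound into exactly the right exponential integrability for $\psi$, without losing a factor that would break the case $\Phi(x)=|x|^n(\log|x|)^p$ at the borderline $p=n$. Once the weight is tuned to match the entropy, the rest of the argument is a robust application of the Guo--Phong--Tong machinery.
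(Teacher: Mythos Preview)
Your outline has the right cast of characters (an auxiliary equation with density $\Phi(F)e^{nF}$, a Guo--Phong--Tong test function, a De Giorgi iteration), but the way you assign roles to them does not close, and it differs from the paper's mechanism at exactly the point that makes the argument work.

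\textbf{Where the paper places $h$.} In the paper, $h$ is \emph{not} fed into the iteration. Instead, one first solves $(\omega+\ddc\psi)^n=V_\omega\,\Phi(F)e^{nF}\omega_M^n/\cN_\Phi(F)$ and then \emph{defines} a second potential $\psi_1:=-h_1(-\tfrac{\alpha}{q}\psi+\Lambda_1)$; concavity of $h$ makes $\psi_1$ $\omega$-psh. A direct dichotomy (either $\Phi(F)$ is large, or $F$ is pointwise bounded by $-\tfrac{\alpha}{q}\psi+\Lambda_1$) gives the split
\[
c_\omega e^{F}\;\le\; a\Bigl(\tfrac{\omega_{\psi_1}^n}{\omega_M^n}\Bigr)^{1/n}+c_\omega f,\qquad f:=\exp\bigl\{-\tfrac{\alpha}{q}\psi+\Lambda_1\bigr\},
\]
with $\|f\|_{L^q}$ uniformly bounded by the $\alpha$-invariant. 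The second comparison equation is then solved with density $f^n\omega_M^n$ (not $e^{nF}$), and a \emph{standard} De Giorgi iteration, using only that $f\in L^q$, yields $-\varphi+r\psi_1\le C$. Unwinding $\psi_1$ gives $-\varphi\le C_1 h(-\beta\psi+C_2)+C_3$ immediately. The five corollaries then follow by computing $h$ for each $\Phi$ and using $\int_M e^{-\alpha\psi}\omega^n\le C$.

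\textbf{What goes wrong in your version.} In Step~2 you run the auxiliary equation with the original density $c_\omega^n e^{nF}$; this has only the weak Orlicz bound, so bounding $\int_{\Omega_s}(-\psi_{2,j}+\Lambda)^{np/(n+1)}\cdot(\text{density})$ in the iteration would require precisely the Trudinger-type energy estimate you are trying to prove. The paper sidesteps this by replacing $e^{nF}$ with the manufactured $f$, whose $L^q$-norm is controlled by the $\alpha$-invariant alone. Second, your test function $\Psi=-\epsilon(-\rho+\Lambda)^{n/(n+1)}-\varphi-s+\delta L$ with $L=-\beta\psi+C_2$ has the wrong sign: at the maximum you get $\omega_\varphi\ge -\delta\beta\,\omega_\psi+\cdots$, which is useless. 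You need $+r\psi_1$ with $\psi_1$ $\omega$-psh, not $+\delta L$ with $L\ge 0$. Third, the displayed inequality $(\sup_{\Omega_s}(-\rho_s))^n\le h(\sup_{\Omega_s}L)^n$ has no justification, and the conclusion ``$s_\infty\le C_1 h(-\beta\psi+C_2)+C_3$ pointwise'' conflates a number with a function. Finally, your Step~1 aims for an $L^\infty$ bound on $\psi$, but $\psi$ is typically \emph{unbounded} (e.g.\ when $\Phi(x)=|x|^p$, $p<n$); the argument uses only $\sup\psi=0$ and Skoda integrability.

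In short: move $h$ from the iteration into the definition of the comparison potential, and replace $e^{nF}$ by the $L^q$-controlled $f$ in the second auxiliary equation; then the De Giorgi step is routine and the relative bound drops out.
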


In this setting, the theorems of Guo, Phong, and Tong \cite{MR4593734} and Guo and Phong \cite{guo2022uniform} are special cases where $\Phi(x) = |x|^p$. To prove our result, we combine the idea of Darvas, Di Nezza, and Lu \cite{MR4211083} on relative $L^\infty$-estimates with the novel idea of Chen and Cheng \cite{CC1}, which involves bounding the solution using entropy. This combination leads to several improvements in the PDE method of Guo, Phong, and Tong and ultimately provides the result stated above. Specifically, we first employ the method of Guo, Phong, and Tong to establish a relative uniform estimate, which generalizes the result of Darvas, Di Nezza, and Lu \cite{DDL21log-concave}. We then incorporate the idea of Chen and Cheng, using entropy to bound the solution, to construct the necessary comparison Kähler metric and comparison function with a uniform $L^q$-norm for any fixed $q > 1$. This construction, together with the improved relative estimate, offers a unified approach to derive uniform Moser-Trudinger inequalities and $L^\infty$-estimates from uniform entropy bounds. It is worth noting that a crucial intermediate step in the proof of Guo, Phong, and Tong is to derive a uniform energy estimate using an entropy bound. These energy estimates, or the related Moser-Trudinger inequalities, were previously proven for the non-degenerate case in \cite{MR4593734} via the Alexandrov-Bakelman-Pucci (ABP) estimate, and later extended to the degenerate case by Guo and Phong \cite{guo2022uniform}. An interesting part in our proof is that we can bypass the need for these energy estimates by constructing a comparison function with a uniform $L^q$-norm for any fixed $q > 1$. 

An application of our method of relative $L^\infty$-estimate is to show the modulus of continuity of solutions to complex Monge-Ampère equations. It sharpens the results of Guo-Phong-Tong-Wang in \cite{guo2021modulus} in the sense that under their assumption of $L^1(\log L)^p$-bound of $e^F$, their results on the modulus of continuity $m(r)=\frac{C}{(\log dist_{\omega_M}(x,y))^{\alpha}}$ with $\alpha=\min\{\frac{p}{n}-1,\frac{p}{n+1}\}$ can actually be improved to $\alpha=\frac{p}{n}-1$. We also proved the modulus of continuity under assumptions on some weaker Orlicz norm. Note that such estimates are also proved by Gurdj, Guenancia and Zeriahi in \cite{GGZ} by pluripotential theory. 
\begin{theorem}Let $(M^n,\omega)$ be a compact K\"ahler manifold with a fixed K\"ahler metric $\omega$. Fix $p>n$, let $u$ be a smooth solution to the equation \[(\omega+\ddc u)^n=e^F\omega^n, \quad \omega+\ddc u>0.
\]There exist constants $C_1=C_1(n,p,\omega,||e^F||_{L^1(\log L)^p}),C_2=C_2(n,p,\omega,||e^F||_{L^1(\log L)^n(\log\log L)^p})$ such that \begin{equation}
|u(x)-u(y)|\leq \frac{C_1}{|\log d_{\omega}(x,y)|^{\frac{p}{n}-1}},
\end{equation}
and \begin{equation}
|u(x)-u(y)|\leq \frac{C_2}{\left(\log(-\log d_{\omega}(x,y))\right)^{\frac{p}{n}-1}}.
\end{equation}
\end{theorem}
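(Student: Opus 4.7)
The plan is to deduce the modulus of continuity estimates from a Kiselman--Demailly sup-convolution regularization of $u$ combined with a relative $L^\infty$-stability inequality derived from the PDE machinery of Theorem 1.1.

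For $\delta \in (0,\delta_0)$, set $M_\delta u(x) = \sup\{u(y): d_\omega(x,y)\leq \delta\}$ and $\tilde u_\delta := M_\delta u - A\delta^2$, where $A = A(\omega)$ is chosen large enough (using a lower bound on the bisectional curvature of $\omega$) that $\tilde u_\delta \in \mathrm{PSH}(M,\omega)$. Since $\tilde u_\delta \geq u - A\delta^2$, one has
\[
\sup_{d_\omega(x,y)\leq \delta} |u(x) - u(y)| \;\leq\; \sup_M(\tilde u_\delta - u) + A\delta^2,
\]
so it suffices to bound $\sup_M(\tilde u_\delta - u)$ quantitatively in $\delta$.

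The central step is a relative stability inequality: for any $v \in \mathrm{PSH}(M,\omega)$ with $v \geq u$,
\[
\sup_M(v - u) \;\leq\; \Psi\bigl(\|v - u\|_{L^1(\omega^n)}\bigr),
\]
where $\Psi$ is obtained by rerunning the De Giorgi iteration in the proof of Theorem 1.1 on the superlevel sets $\{v - u > s\}$, with the difference $v - u$ playing the role of $-\varphi$ and the auxiliary Monge--Amp\`ere comparison constructed from entropy as in Theorem 1.1. The function $\Psi$ inherits its shape from $h$: for $e^F \in L^1(\log L)^p$ with $p > n$ one obtains $\Psi(t) \sim |\log t|^{-(p/n-1)}$; for $e^F \in L^1(\log L)^n(\log\log L)^p$ one obtains $\Psi(t) \sim (\log|\log t|)^{-(p/n-1)}$. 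Combining with the elementary $L^1$-bound $\|\tilde u_\delta - u\|_{L^1(\omega^n)} = O(\delta^2)$ (a direct volume estimate on $\delta$-tubes together with the uniform bound on $u$) and noting $|\log \delta^2| \asymp |\log \delta|$ yields the two claimed modulus estimates.

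The main obstacle is the stability inequality itself: the De Giorgi iteration and auxiliary Monge--Amp\`ere comparison from Theorem 1.1 must be adapted from the absolute setting (bounding $-\varphi$ by $\psi$) to the relative setting where $v-u$ replaces $-\varphi$, in such a way that the $L^1$-smallness of $v-u$ enters the final estimate with the correct dependence on the Orlicz class of $e^F$. The sharpening over Guo--Phong--Tong--Wang (replacing $\min\{p/n-1,\,p/(n+1)\}$ with $p/n-1$) comes precisely from working directly with the $L^1$ norm of $v-u$ in this relative estimate rather than routing through an intermediate $L^p$-bound, which is the source of the additional loss in their argument.
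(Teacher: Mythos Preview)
Your overall strategy matches the paper's: regularize $u$, establish a relative $L^\infty$ stability estimate that converts $L^1$-closeness of the regularization into an $L^\infty$ bound with modulus governed by the Orlicz class of $e^F$, and read off the oscillation. The stability inequality you describe is essentially what the paper proves in Section~4 (with the entropy-built comparison $\psi$ and a scale parameter $r\sim\Lambda_1^{-p/n}$ tied to $\delta$), so that part is fine.

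The gap is in your regularization step. The function $\tilde u_\delta=M_\delta u-A\delta^2$, with $M_\delta u(x)=\sup_{B_\omega(x,\delta)}u$, is \emph{not} known to be $\omega$-psh: the sup over a family of translates is psh in the flat model, but on a curved manifold the geodesic balls vary with $x$ in a way that does not preserve plurisubharmonicity, and subtracting $A\delta^2$ does not repair this. Moreover, your claimed bound $\|M_\delta u-u\|_{L^1}=O(\delta^2)$ is not a ``direct volume estimate'': for the sup-convolution the first-order Taylor term does \emph{not} cancel, so one only expects $O(\delta)\cdot\int|\nabla u|$ at best, and there is no a~priori gradient bound here. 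The paper instead uses Demailly's exponential-map average $\rho_\delta u$ (Proposition~\ref{l1approximation}), for which $\rho_t u+Kt$ is increasing in $t$, the $L^1$ bound $\|\rho_\delta u-u\|_{L^1}\le C\delta^2$ holds by a Fubini/Jacobian argument, and $\omega$-plurisubharmonicity is recovered only after forming the Kiselman infimum $U_{c,\delta}=\inf_{t\in(0,\delta)}\bigl(\rho_t u+Kt-c\log(t/\delta)-K\delta\bigr)$ and rescaling to $u_{c,\delta}=(1-A'c)U_{c,\delta}$.

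This extra parameter $c$ is not cosmetic: after the De~Giorgi iteration yields $u_{c,\delta}-u\le 2r\,u_{c,\delta}+s_\infty$ with $s_\infty\le C|\log\delta|^{1-p/n}$, one still has to pass from a bound on $u_{c,\delta}-u$ back to $\rho_\delta u-u$. The paper does this by locating, for each $x$, the $t_x\in(0,\delta]$ realizing the Kiselman infimum, using monotonicity of $\rho_t u+Kt$ to get $\rho_{t_x}u(x)\ge u(x)-Kt_x$, and then showing $t_x\ge\theta\delta$ from the choice $c=|\log\delta|^{1-p/n}$; this final step has no analogue in your sup-convolution scheme. Replacing $M_\delta u$ by the Demailly--Kiselman construction fixes both issues and your outline then coincides with the paper's argument.
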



One more application of the method of relative uniform estimate is to prove a stability estimate for complex Hessian equations. Such type of results were proved in \cite{MR1986892,MR3219505,MR2669357} by pluripotential theory and \cite{MR4505150} by their PDE method. Roughly speaking, the stability estimate says that when the right-hand sides of the equations are in some fixed Orlicz space and $L^1$-close, then the potentials are $L^\infty$-close. 
In the spirit of \cite{MR4505150}, we prove a stability results by assuming an almost weakest Orlicz-norm on the right-hand side (weak in the sense of weakest norm to ensure an $L^\infty$-bound of the potential). Our theorem states \begin{theorem}Let $(M^n,\omega_M)$ be a compact K\"ahler manifold with a given K\"ahler metric $\omega_M$ with normalization $[\omega_M]^n=1$. Let $\Phi$ be an increasing positive function such that $ \int^\infty\Phi^{-\frac{1}{n}}(t)dt<\infty$ and $\Phi(x)\leq e^{C_0x}$ for some $C_0>0$ when $x>1.$
Given $K>0$, let $G,H$ be smooth functions in\[
    \mathcal{K}:=\left\{F|\int_Me^F\omega_M^n=\int_M\omega_M^n, \int_M\Phi(|F|)e^{\frac{n}{k}F}\omega_M^n\leq K\right\}.
\]
Let $\chi$ be a closed nonnegative real $(1,1)$-form on $M$, and define $\omega_t=\chi+t\omega_M$. Assume that $u,v$ are smooth admissable solutions for \[
    (\omega_t+\ddc u)^k\wedge\omega_M^{n-k}=c_{t}e^G\omega_M^n,\quad (\omega_t+\ddc v)^k\wedge\omega_M^{n-k}=c_te^H\omega_M^n,
\]where $c_t:=\int_M\omega_t^k\wedge\omega_M^{n-k}$. We normalize $u,v$ as $    \max(u-v)=\max(v-u).$ Then there exists a constant $C(||e^G-e^H||_{L^1(\omega_M^n)}|M,\omega_M,n,K,,\Phi,\frac{c_t^{\frac{1}{k}}}{V_t^{\frac{1}{n}}}, C_0)$ such that \[
    \max(u-v)\leq C(||e^G-e^H||_{L^1(\omega_M^n)}|M,\omega_M,n,K,\Phi,,\frac{c_t^{\frac{1}{k}}}{V_t^{\frac{1}{n}}}, C_0),
\]
where $V_t:=\int_M\omega^n,$ and the constant $C(t|a,b,\ldots)$ means a constant depending on $t,a,b,\ldots$ and $\lim_{t\rightarrow 0}C(t|a,b,\ldots)=0.$
\end{theorem}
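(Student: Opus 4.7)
The plan has three steps. In the first step, I establish uniform $L^\infty$ bounds on $u$ and $v$. The hypothesis $\int^\infty \Phi^{-1/n}(t)\,dt<\infty$ places us in the setting of item (4) of the main theorem, which applied to each equation yields $\|u\|_\infty + \|v\|_\infty \le K_0$ with $K_0$ depending only on $M,\omega_M,n,K,\Phi,\frac{c_t^{1/k}}{V_t^{1/n}},C_0$; the appearance of $\frac{c_t^{1/k}}{V_t^{1/n}}$ ensures uniformity as $t\to 0$ and $\omega_t$ degenerates.

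In the second step, I set up a rooftop comparison. Normalize so that $M_0 := \max(u-v)=\max(v-u)\ge 0$ and set $\varepsilon := \|e^G-e^H\|_{L^1(\omega_M^n)}$; the target is a bound $M_0\le \Omega(\varepsilon)$ with $\Omega(\varepsilon)\to 0$ as $\varepsilon\to 0$. Consider the $\omega_t$-admissible envelope $w := \max(u,v)$, which agrees with $u$ on $\{u>v\}$ and with $v$ elsewhere. The standard local property of Hessian measures under pointwise maxima gives
\[
(\omega_t+\ddc w)^k\wedge \omega_M^{n-k} \ \ge\ c_t e^H\omega_M^n + c_t(e^G-e^H)\mathbf{1}_{\{u>v\}}\omega_M^n,
\]
and moreover $w-v\ge 0$ with $\sup(w-v)=M_0$. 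Thus the stability bound on $M_0$ reduces to controlling $\sup(w-v)$ by $\varepsilon$.

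In the third step, I apply the relative $L^\infty$ estimate underlying the main theorem to the pair $(w,v)$, both uniformly bounded and $\omega_t$-admissible, with Hessian measures differing in total variation by at most $c_t\varepsilon$. A De Giorgi iteration on the super-level sets $\{w-v>\tau\}$, mirroring item (4)'s proof but tracking the small parameter $\varepsilon$ at each iteration step, should yield $\sup(w-v)\le \Omega(\varepsilon)$ with $\Omega$ a concave modulus vanishing at $0$. The main obstacle lies precisely here: the relative $L^\infty$ estimate of the main theorem compares a solution to an entropy-constructed auxiliary function $\psi$, whereas here I must compare two genuine admissible solutions and extract quantitative smallness from $\varepsilon$. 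The hypothesis $G,H\in\mathcal K$ bounds the ambient Orlicz integrals by $2K$, and the growth constraint $\Phi(x)\le e^{C_0 x}$ together with the uniform $L^\infty$-bound from Step 1 ensures that $|e^G-e^H|$ inherits the necessary Orlicz integrability, so all constants in the iteration remain controlled; the delicate point is isolating the factor of $\varepsilon$ at the correct stage of the iteration so that $\Omega(\varepsilon)\to 0$ as $\varepsilon\to 0$.
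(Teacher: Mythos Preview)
Your Step~1 is fine and matches the paper: the condition $\int^\infty\Phi^{-1/n}<\infty$ together with the degenerate uniform estimate gives a common bound $\beta$ on $\|u\|_\infty,\|v\|_\infty$.

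Step~3, however, is not a proof but an acknowledgment of the real difficulty: you write ``the delicate point is isolating the factor of $\varepsilon$ at the correct stage of the iteration'' and then stop. This is exactly where the work lies, and nothing you have written explains how a De~Giorgi iteration on $\{w-v>\tau\}$ would ever see $\varepsilon$ rather than the fixed Orlicz bound $K$. The Hessian measures of $w$ and $v$ differ in total variation by $c_t\varepsilon$, but the iteration in the paper is driven by the $L^q$-norm of a comparison density $f$, not by a total-variation defect; so you need a mechanism that converts smallness of $\varepsilon$ into smallness of the starting level $\phi(s_0)$ in the iteration. You have not supplied one.

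The paper's argument is structurally different from your rooftop approach. It compares $v$ directly with the convex combination $(1-r)u+a\psi$ (with $\psi$ the entropy-built barrier), and the $\varepsilon$-dependence enters through a separate lemma (Lemma~\ref{controlAs}) bounding $\int_{E_{s_0}}f\,\omega_M^n\le c_1\|e^G-e^H\|_{L^1}/r^3$ for $s_0=\sup(a\psi)+3\beta r$. Two ingredients make this work and are absent from your sketch. First, the comparison density is refined to $f=\min\{\exp(-\tfrac{\alpha}{q}\psi_1+\Lambda_1),\,e^H\}$, so that $f\le e^H$; this is what allows the level-set integral to be controlled by $\|e^G-e^H\|_{L^1}$. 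Second, the lemma itself introduces a further auxiliary solution $\rho_j$ and splits $M$ along $S=\{e^G\le(1-r^2)e^H\}$: on $S$ one uses $r^2\int_S e^H\le\|e^G-e^H\|_{L^1}$ directly, while off $S$ a Hessian comparison $((1-r)\omega_u+\tfrac34 r\omega_{\rho_j})^k\wedge\omega_M^{n-k}\ge c_t(1+c_0r)e^H\omega_M^n$ squeezes the remaining mass. The growth hypothesis $\Phi(x)\le e^{C_0x}$ is then used to relate $r^{-1}$ to $e^{C\Lambda_1}$, and one chooses $\Lambda_1\sim -\tfrac1B\log\varepsilon$ so that all error terms in the final bound for $s_\infty$ vanish as $\varepsilon\to0$, yielding the explicit modulus \eqref{expliciteexpresionconstant}. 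Your proposal contains none of these steps.
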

We remark that the constant $C(t|a,b,\cdots)$ can be made explicit(for the detailed expression, see section 5). Previously, the results of Guo, Phong, and Tong \cite{MR4505150} require that $e^F$ has uniform $L^p$ bound for some $p>\frac{n^2}{k}$ when the K\"ahler metric degenerates to a big semi-positive form. Together with the uniform energy estimates for degenerate metric proved by Guo and Phong in \cite{guo2022uniform}, it easy to see that their proof for nondegenerated metric works also and the requirement for $e^F$ is that it has uniform $L^p$ norm for some $p>\frac{n}{k}$. However, when one tries to prove such stability results when $e^F$ belongs to the space defined in our theorem which is just enough to ensure the uniform $L^\infty$-estimate, the main difficulty is to estimate the volume of the level set (see lemma \ref{controlAs}). It's interesting to find that our relative method can be applied and the difficulty mentioned above was overcome by noting that we can choose the volume form to be uniformly bounded in $L^q$ for any fixed $q>1$. 

Another application of our method of relative uniform estimate is to uniformly bound the diameter of Kähler manifolds with Kähler metrics belonging to the subset $\cW$ (defined in Section 7) of the Kähler cone. There are a wealth of research on diameter bounds for Kähler manifolds (for example, see \cite{MR1807951, MR2538503, MR2863918, guo2022diameter, guo2021modulus, MR4255068, GGZ}, and references therein). Notably, two works \cite{guo2022diameter, MR4255068} have introduced new insights into the diameter bounds of Kähler metrics. In \cite{MR4255068}, Li established a connection between the modulus of continuity of potentials and the diameter bound, and further advancements in this direction were made in \cite{guo2021modulus, GGZ}. Meanwhile, in \cite{guo2022diameter, guo2024diameter2}, Guo, Phong, Song, and Sturm derived a diameter bound by obtaining uniform $W^{1,1}$-estimates for the Green’s function. Following this approach, we prove estimates on the Green’s functions for metrics in class $\cW$ with some weaker bound on entropy, and derive similar diameter bounds. We remark that a key advantage of the pluripotential method is its direct extension to singular Kähler metrics. On the other hand, Guo, Phong, Song, and Sturm, using their PDE-based method, also proved a diameter bound in \cite{guo2023sobolev} by extending the Sobolev inequality to singular spaces. The Sobolev inequality implies non-collapsing results, which, in turn, lead to diameter bounds in singular spaces.

\begin{theorem}\label{greenfunction}
For any given $p>2n$,let $\Phi(x)=|x|^n(\log |x|)^{p}$.
There exists a constant $C=C(M,\omega_M,n,A,K,p)$, such that for any $\omega\in \mathcal{W}$, we have \begin{enumerate}
\item $ -\inf G(x,\cdot)[\omega]^n+\int_M|G(x,\cdot)|\omega^n+\int_M|\nabla G(x,\cdot)|\omega^n\leq C$
\item $diam(M,\omega)\leq C$
\end{enumerate}
\end{theorem}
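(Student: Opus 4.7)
The strategy is to adapt the PDE approach of Guo, Phong, Song, and Sturm to the Green's function setting, feeding in the sharper relative $L^\infty$-estimate of Theorem~1.1 in place of their $L^\infty$-estimate. Fix $x_0 \in M$ and write $G = G(x_0, \cdot)$, normalized by $\int_M G\,\omega^n = 0$. For a truncation parameter $\Lambda > 0$ set $G_\Lambda := \max\{G, -\Lambda\}$; for a parameter $\beta > 0$ to be chosen, consider the auxiliary complex Monge--Amp\`ere equation
\begin{equation*}
(\omega + \ddc \phi_\Lambda)^n \;=\; c_\Lambda\, e^{-\beta G_\Lambda}\omega^n, \qquad \sup_M \phi_\Lambda = 0,
\end{equation*}
with $c_\Lambda$ the normalizing constant making both sides integrate to $V_\omega$. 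The right-hand side is smooth and positive. The plan is to apply Theorem~1.1 to $\phi_\Lambda$, with $\Phi(x) = |x|^n(\log|x|)^p$, uniformly in $\Lambda$, and then convert the resulting bound on $\phi_\Lambda$ into the claimed bounds on $G$.

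The heart of the argument is the verification of the entropy hypothesis $\cN_\Phi(F_\Lambda) \le K'$ uniformly in $\Lambda$ and in $\omega \in \cW$, where $F_\Lambda$ is the log-density of the right-hand side with respect to $\omega_M^n$. Up to volume factors, this reduces to bounding integrals of the schematic form $\int_M |G_\Lambda|^n(\log|G_\Lambda|+1)^p e^{-n\beta G_\Lambda}\,\omega^n$. I would obtain this in two moves. First, apply Theorem~1.1 directly to a Monge--Amp\`ere potential $\rho$ for $\omega$, with $\omega_M$ as background; the defining Orlicz entropy bound on $\cW$ (parametrized by $K$ and $A$) supplies the required $\cN_\Phi$ and yields uniform Moser--Trudinger-type exponential integrability for $\rho$, hence control of the log density of $\omega$ against $\omega_M$. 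Second, combine this with the Green's equation $-\Delta_\omega G = \delta_{x_0}-V_\omega^{-1}$ via integration by parts against a suitable test function to deduce uniform Orlicz integrability of $-G_\Lambda$, which then bounds the schematic integral. The condition $p > 2n$, rather than merely $p > n$, creates the slack needed for this two-step composition to close.

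Once $\cN_\Phi(F_\Lambda) \le K'$ is in hand, Theorem~1.1 case~(4), applicable since $\int_0^\infty \Phi(t)^{-1/n}\,dt < \infty$ for $p > n$, yields $\|\phi_\Lambda\|_{L^\infty} \le C$ uniformly. Testing the equation $(\omega+\ddc\phi_\Lambda)^n = e^{F_\Lambda}\omega^n$ against the constant $1$ and against $-G_\Lambda$, and combining with the standard inequality $\int_M \phi_\Lambda\,(\omega^n-(\omega+\ddc\phi_\Lambda)^n) \le 0$, converts the bound on $\phi_\Lambda$ into a uniform $L^\infty$-bound on $-G_\Lambda$ and the $L^1$-bound $\int_M |G_\Lambda|\,\omega^n \le C$. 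The $L^1$-gradient bound then follows by the Guo--Phong--Song--Sturm coarea / level-set decomposition argument, now using the uniform $L^\infty$-bound on $\phi_\Lambda$ as Sobolev-type input. Letting $\Lambda \to \infty$ via monotone convergence delivers part~(1); part~(2) is immediate from the Croke-type inequality $d_\omega(x,y) \le C\bigl(\int_M |\nabla G(x,\cdot)|\,\omega^n + \int_M |\nabla G(y,\cdot)|\,\omega^n\bigr)$ that concludes the Guo--Phong--Song--Sturm diameter bound.

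\textbf{The main obstacle} will be the entropy verification in the second step: showing that the Orlicz hypotheses packaged in $\cW$ really do translate into a uniform bound on $\cN_\Phi(F_\Lambda)$ independent of $\Lambda$ and $\omega$. The flexibility of the relative estimate — namely, the freedom to choose an auxiliary $\omega$-plurisubharmonic $\psi$ in Theorem~1.1 — together with the logarithmic margin afforded by $p > 2n$ are precisely what allow this composition of two applications of Theorem~1.1 to close.
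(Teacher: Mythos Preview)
Your approach has a genuine circularity in the entropy verification step. The density $F_\Lambda = \log c_\Lambda - \beta G_\Lambda + F_\omega$ contains the term $-\beta G_\Lambda$, so
\[
\cN_\Phi(F_\Lambda)\;=\;\frac{c_\Lambda}{[\omega]^n}\int_M \Phi(F_\Lambda)\,e^{-\beta G_\Lambda}\,\omega^n
\]
involves the factor $e^{-\beta G_\Lambda}$. On the sublevel set $\{G\le -\Lambda\}$ this factor equals $e^{\beta\Lambda}$, so a uniform-in-$\Lambda$ bound on $\cN_\Phi(F_\Lambda)$ is equivalent to exponential decay of the $\omega$-volume of $\{G\le -\Lambda\}$, i.e.\ to exponential integrability of $-G$. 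That is strictly stronger than the $L^1$ and lower bounds on $G$ you are trying to establish. Your ``second move'' --- integrating the Green equation by parts against a test function --- can produce at best polynomial (Orlicz) integrability of $-G_\Lambda$ such as $\int_M |G_\Lambda|^q\omega^n$; it cannot absorb the exponential weight $e^{-\beta G_\Lambda}$, and the slack from $p>2n$ does not bridge an exponential gap. Consequently Theorem~1.1 never applies to $\phi_\Lambda$ with constants independent of $\Lambda$, and the whole scheme stalls before you extract any information about $G$.

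The paper avoids this trap by reversing the order: it first proves a one-sided maximum principle (Lemma~\ref{onesideuniform}) via the relative comparison method, giving $\sup v \le C(1+\|v\|_{L^1(f_\omega\omega_M^n)})$ for any $v$ with $\Delta_\omega v\ge -1$ on $\{v\ge 0\}$. Applying this to solutions of Poisson equations with bounded right-hand side yields $\|G\|_{L^1(\omega^n)}\le C$, and applying it to $-[\omega]^n G$ yields the lower bound. Only after these are in hand does the paper solve an auxiliary Monge--Amp\`ere equation --- not with $e^{-\beta G}$ on the right, but with the polynomially growing weight $H^n\sim (\log([\omega]^n\cG))^{nr}$, whose $\Phi$-entropy is controlled precisely because $\int_M\cG\,\omega^n$ is already bounded. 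The condition $p>2n$ enters in estimating the cross terms $H^n|F_\omega|^n\log^{p'}|F_\omega|$ via Young's inequality with the remaining budget $p-p'>n$. The resulting Orlicz bound $\int_M \cG(\log[\omega]^n\cG)^r\omega^n\le C$ for some $r>1$ then feeds into Cauchy--Schwarz together with the elementary identity $\int_M |\nabla\cG|^2\cG^{-1}(\log[\omega]^n\cG)^{-1-\beta}\omega^n\le \beta^{-1}$ to give $\int_M|\nabla\cG|\,\omega^n\le C$. Your diameter step via Green's formula on the distance function is the same as the paper's.
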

The paper is organized as follows. In Section 2, we discuss the improved relative estimate and provide an application to cusp metrics. Section 3 focuses on the construction of comparison Kähler metrics and comparison functions with uniform $L^q$ bounds for any given $q > 1$, along with an application to $L^\infty$-estimates for the cscK equation with an integral bound on the scalar curvature. In Section 4, we explore the application of the method to the modulus of continuity of solutions to complex Monge-Ampère equations. In section 5 we prove the stability estimates. The generalization of the uniform estimate to the nef class is discussed in Section 6. In Section 7, we prove geometric estimates on Green’s functions and establish diameter bounds. Finally, in Section 8, we deal with the relative estimates for general equations.

\textbf{Acknowledgements}. The author would like to thank his advisor, Professor Xiuxiong Chen, for his guidance and encouragement. He also thanks Jingrui Cheng for reading the draft and helpful comments and suggestions. The author was partially supported by Simons Foundation International, LTD.

\section{Relative \texorpdfstring{$L^\infty$}{l}-estimate}
\begin{proposition}\label{babyrelativeestimate}Let $(M^n,\omega)$ be a compact K\"ahler manifold. Assume $\omega_\varphi,\omega_\psi$ are two smooth K\"ahler metrics satisfying \[
    \omega_\varphi^n\leq a^n\omega_\psi^n+f\omega^n,
\]
for some $a\in [0,1)$ and smooth positive function $f$. We normalize the K\"ahler potentials as $\sup\varphi=\sup\psi=0$. Given $q>1$, there exists a constant $C=C(M,\omega,n,||f||_{L^q(\omega^n)},(1-a)^{-n-1},q)$ such that \[
    \varphi\geq a\psi-C.
\]
\end{proposition}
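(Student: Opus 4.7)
The plan is to adapt the Guo--Phong--Tong PDE approach to the relative setting. The essential observation is that $u := \varphi - a\psi$ is plurisubharmonic with respect to the auxiliary K\"ahler metric $\omega_* := (1-a)\omega + a\omega_\psi$ (which has the same total volume $V$ as $\omega$) with $\omega_* + \ddc u = \omega_\varphi$; moreover Minkowski's determinant inequality gives $\omega_*^n \ge (1-a)^n\omega^n + a^n\omega_\psi^n$ pointwise. The task is to bound $\inf_M u \ge -C$.

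For each $s > 0$ and smoothing parameter $\epsilon > 0$, I would fix a smooth non-decreasing approximation $\tau_\epsilon$ of $\mathbf{1}_{(0,\infty)}$ and invoke Yau's theorem to solve the auxiliary Monge--Amp\`ere equation
\[
(\omega + \ddc\rho_{s,\epsilon})^n = A_{s,\epsilon}^{-1}\tau_\epsilon(-u-s)\omega^n, \qquad \sup_M\rho_{s,\epsilon} = 0,
\]
with $A_{s,\epsilon} := V^{-1}\int_M\tau_\epsilon(-u-s)\omega^n$. Then I apply the GPT maximum-principle argument to the test function $\Phi_{s,\epsilon} := -\delta(-\rho_{s,\epsilon}+\Lambda)^{n/(n+1)} - u - s$, with $\delta > 0$ small and $\Lambda$ chosen so that $\alpha_0 := \delta n(n+1)^{-1}(-\rho_{s,\epsilon}+\Lambda)^{-1/(n+1)} \le (1-a)/2$ throughout. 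If $\Phi_{s,\epsilon} \le 0$ everywhere we obtain a direct bound on $\sup_M(-u)$; otherwise at a maximum $x_0$ with $\Phi_{s,\epsilon}(x_0) > 0$, the condition $\ddc\Phi_{s,\epsilon} \le 0$ combined with the identity $\ddc u = \omega_\varphi - \omega_*$ yields at $x_0$ the Hermitian inequality $\omega_\varphi \ge \tfrac{1-a}{2}\omega + a\omega_\psi + \alpha_0\omega_{\rho_{s,\epsilon}}$. Taking determinants relative to $\omega$, applying Minkowski's inequality iteratively, and using $(x+y+z)^n \ge x^n+y^n+z^n$ for $x,y,z\ge 0$ yields at $x_0$
\[
\frac{\omega_\varphi^n}{\omega^n}(x_0) \ge \Bigl(\tfrac{1-a}{2}\Bigr)^n + a^n\frac{\omega_\psi^n}{\omega^n}(x_0) + \alpha_0^n\frac{\tau_\epsilon(-u(x_0)-s)}{A_{s,\epsilon}}.
\]
Combining with the hypothesis $\omega_\varphi^n/\omega^n \le a^n\omega_\psi^n/\omega^n + f$ produces the decisive cancellation of the $a^n\omega_\psi^n/\omega^n$ term, leaving the $\omega_\psi$-independent inequality $f(x_0) \ge ((1-a)/2)^n + \alpha_0^n\tau_\epsilon(-u(x_0)-s)/A_{s,\epsilon}$.

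Letting $\epsilon\to 0$, so that $\tau_\epsilon(-u(x_0)-s)\to 1$ (since $\Phi_{s,\epsilon}(x_0) > 0$ forces $-u(x_0)-s > 0$), this rearranges to $\alpha_0(x_0)^n \le A_s f(x_0)$, which lower-bounds $-\rho_s(x_0)+\Lambda$ in terms of $\delta$, $A_s$, and $f(x_0)$. Using that $x_0$ is the global maximum of $\Phi_{s,\epsilon}$, together with a Ko{\l}odziej-type control of $\|\rho_s\|_\infty$ by $\|f\|_{L^q(\omega^n)}/A_s$, one extracts a GPT-style relation between $\sup_M(-u-s)$, $A_s$, $\delta$, $\Lambda$, and $\|f\|_{L^q(\omega^n)}$. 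A De Giorgi-type iteration on the level function $\phi(s) := \int_M(-u-s)_+\omega^n$---bootstrapped off the first-stage uniform volume bound $(1-a)^n|\{-u > s\}| \le \int_{\{-u>s\}}f\omega^n$ (obtained by applying the local comparison principle on $\{\varphi < a\psi - s\}$ to $\varphi$ and $a\psi-s$, then invoking the Minkowski inequality $\omega_*^n \ge (1-a)^n\omega^n + a^n\omega_\psi^n$)---then terminates at a level $s_\infty = C(M,\omega,n,\|f\|_{L^q(\omega^n)},(1-a)^{-n-1},q)$, yielding $\varphi \ge a\psi - C$.

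The main obstacle is arranging all the estimates so that the final constant depends only on $(1-a)^{-n-1}$, $\|f\|_{L^q(\omega^n)}$, and background data---not on $\omega_\psi$, for which no uniform $L^q$ bound on $\omega_\psi^n/\omega^n$ is available. The cancellation of the $a^n\omega_\psi^n/\omega^n$ term in the Minkowski step is precisely what achieves this $\omega_\psi$-independence. A secondary technicality is that the Ko{\l}odziej bound on $\|\rho_s\|_\infty$ blows up as $A_s \to 0$; the GPT bookkeeping ensures $\|\rho_s\|_\infty$ enters only to the exponent $n/(n+1) < 1$, so it is absorbed by the small $\delta^{n+1}$ factor in the iteration.
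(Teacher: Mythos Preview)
Your overall architecture is right, and you have correctly identified the crucial point: at the maximum of the test function the Minkowski inequality produces the term $a^n\omega_\psi^n$ on the right, which cancels against the $a^n\omega_\psi^n$ in the hypothesis, so that the final estimate is independent of $\omega_\psi$. This is exactly the mechanism the paper uses.

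The gap is in your choice of right-hand side for the auxiliary equation. The paper solves
\[
(\omega+\ddc\psi_{1,j})^n=\frac{\eta_j(-\varphi+a\psi-s)\,f\,\omega^n}{A_{s,j}},\qquad A_{s,j}=\frac{1}{[\omega]^n}\int_M\eta_j(-\varphi+a\psi-s)f\,\omega^n,
\]
with $\eta_j\to x_+$, whereas you use $\tau_\epsilon(-u-s)\omega^n/A_{s,\epsilon}$ with $\tau_\epsilon\to\mathbf 1_{(0,\infty)}$ and \emph{no} $f$-weight. Both modifications break the maximum-principle step. With the paper's choice, at the maximum $x_0$ the hypothesis $\omega_\varphi^n\le a^n\omega_\psi^n+f\omega^n$ and the lower bound $\omega_\varphi^n\ge a^n\omega_\psi^n+(\tfrac{\epsilon n}{n+1})^n(-\psi_1+\Lambda)^{-n/(n+1)}\eta_j(-u-s)fA_{s,j}^{-1}\omega^n$ combine to give, after cancelling the common $f$,
\[
(-u-s)_+\le \epsilon_{1,j}(-\psi_{1,j}+\Lambda_{1,j})^{n/(n+1)}\qquad\text{pointwise on }M,
\]
i.e.\ $\Psi_j\le 0$. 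With your choice you only get $f(x_0)\ge \alpha_0(x_0)^n/A_s$ at the single point $x_0$; this contains neither an upper bound on $-u(x_0)$ (since the factor $(-u-s)_+$ is absent from the auxiliary volume form) nor any pointwise control of $-u$ elsewhere, because $f(x_0)$ is not bounded by the data (only $\|f\|_{L^q}$ is). Your proposed remedy---an $L^\infty$ Ko\l odziej bound on $\rho_s$---does not close this: the $L^q$-norm of your right-hand side is $\sim A_s^{(1-q)/q}$, which blows up, and the exponent $n/(n+1)$ on $(-\rho_s+\Lambda)$ does not compensate since you have no smallness to play it against. The paper never uses an $L^\infty$ bound on the auxiliary potential; instead, after obtaining the \emph{pointwise} inequality $\Psi_j\le 0$, it integrates against $f\omega^n$ over $\Omega_s$ and controls $\int(-\psi_{1,j}+\Lambda_{1,j})^{np/(n+1)}\omega^n$ via the $\alpha$-invariant.

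A second, related issue: the paper's De Giorgi iteration runs on $\phi(s)=[\omega]^{-n}\int_{\Omega_s}f\omega^n$, with $A_s=\int_{\Omega_s}(-u-s)_+f\omega^n$ linking levels via $t\phi(s+t)\le A_s$ and $A_s\le C\phi(s)^{1+\delta_0}$. Your proposed level function $\int_M(-u-s)_+\omega^n$ and the comparison-principle volume bound $(1-a)^n|\{-u>s\}|\le\int_{\{-u>s\}}f\omega^n$ (which is correct) do not by themselves produce the needed super-linear recursion. In short: replace the indicator by $(-u-s)_+f$ in the auxiliary equation, choose $\epsilon,\Lambda$ as explicit powers of $A_{s,j}$ so that the maximum principle yields $\Psi\le 0$ directly, and iterate on $\int_{\Omega_s}f\omega^n$.
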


\begin{proof}Let $\psi_1$ be the solution to the auxiliary equation \[
    (\omega+\ddc\psi_{1,j})^n=\frac{\eta_j(-\varphi+a\psi-s)f\omega^n}{A_{s,j}}, \quad \sup\psi_{1,j}=0.
\]
Here $\eta_j(x)=\frac{1}{2}(x+\sqrt{x^2+\frac{1}{j^2}})$, which is a smooth approximation of $x_+$ from above as $j\rightarrow \infty$, and \[
    A_{s,j}=\frac{1}{[\omega]^n}\int_M\eta_j(-\varphi+a\psi-s)f\omega^n.
\]
Note that $A_{s,j}\rightarrow A_s:=\frac{1}{[\omega]^n}\int_M(-\varphi+a\psi-s)_+f\omega^n$ as $j\rightarrow \infty$. 

Consider the function $\Psi_j:=(-\varphi+a\psi-s)-\epsilon_{1,j}(-\psi_{1,j}+\Lambda_{1,j})^{\frac{n}{n+1}}$. Choose $\epsilon_{1,j},\Lambda_{1,j}$ to be \[
\epsilon_{1,j}=\left(\frac{n+1}{n}\right)^{\frac{n}{n+1}}A_{s,j}^{\frac{1}{n+1}}, \quad \Lambda_{1,j}=\frac{n}{n+1}\frac{1}{(1-a)^{n+1}}A_{s,j}.
\]
We claim that $\Psi_j\leq 0$. Assume that $\Psi_j$ attains its maximum at $x_0$, without loss of generality we can assume $x_0\in \Omega_s:=\{-\varphi+a\psi-s>0\}$. 
Then at $x_0$, we have (for convenience, we suppress the subscript $1,j$ in $\epsilon_{1,j},\Lambda_{1,j}$)\[
    \begin{aligned}
    0\geq &-\ddc\varphi+a\ddc\psi+\epsilon\frac{n}{n+1}(-\psi_1+\Lambda)^{-\frac{1}{n+1}}\ddc\psi_1\\
    &+\epsilon\frac{n}{(n+1)^2}\sqrt{-1}\partial{\psi_1}\wedge\bar{\partial}\psi_1\\
    \geq & -\omega_\varphi+a\omega_\psi+\epsilon\frac{n}{n+1}(-\psi_1+\Lambda)^{-\frac{1}{n+1}}\omega_{\psi_1}.
\end{aligned}
\]
For the last inequality above we used that $(1-a)= \epsilon\frac{n}{n+1}\Lambda^{-\frac{1}{n+1}}$.
Therefore\[
\begin{aligned}
a^n\omega_{\psi}^n+f\omega^n\geq \omega_\varphi^n&\geq (a\omega_{\psi}+\epsilon\frac{n}{n+1}(-\psi_1+\Lambda)^{-\frac{1}{n+1}}\omega_{\psi_1})^n\\
&\geq a^n\omega_\psi^n+(\frac{\epsilon n}{n+1})^{n}(-\psi_1+\Lambda)^{-\frac{n}{n+1}}\eta_j(-\varphi+a\psi-s)f\omega^n\frac{1}{A_{s,j}}.
\end{aligned}
\]
Note that $\eta_j(x)\geq x_+$, we have \[
    1\geq (\frac{\epsilon n}{n+1})^{n}(-\psi_1+\Lambda)^{-\frac{n}{n+1}}(-\varphi+a\psi-s)_+\frac{1}{A_{s,j}}.
\]
With the choice of $\epsilon_1,\Lambda_1$, it is equivalent to $\Psi(x_0)\leq 0$

Next, by the H\"older inequality, we have \[
    [\omega]^nA_s\leq \int_M(-\varphi+a\psi)_+f\omega^n\leq (||\varphi||_{L^{\frac{q}{q-1}}(\omega^n)}+||\psi||_{L^{\frac{q}{q-1}}(\omega^n)})||f||_{L^q(\omega^n)}\leq C(\alpha_{M,\omega},q)||f||_{L^q(\omega^n)},
\]where we bound the $L^{\frac{q}{q-1}}$-norm of $\varphi,\psi$ uniformly by Tian's $\alpha$-invariant $\alpha_{M,\omega}$. Since $A_{s,j}$ converges to $A_s$ as $j\rightarrow \infty$, we get\[
    \Lambda_{1,j}=\frac{n}{n+1}\frac{1}{(1-a)^{n+1}}A_{s,j}\leq C(\frac{1}{[\omega^n]},n,(1-a)^{-n-1},\alpha_{M,\omega},q)||f||_{L^q(\omega^n)}\quad \text{ for } j \text{ large}. 
\]
Denote $\phi(s):=\frac{1}{[\omega^n]}\int_{\Omega_s}f\omega^n.$ Then, on $\Omega_{s+t}$, we have $-\varphi+a\psi-s-t\geq 0$, which implies \[
    \frac{-\varphi+a\psi-s}{t}\geq 1.
\]
So we have \begin{equation}\label{tphisplust}
t\phi(s+t)\leq \frac{1}{[\omega]^n}\int_{\Omega_s}(-\varphi+a\psi-s)_+f\omega^n=A_s.
\end{equation}
On the other hand, since $\Psi_j\leq 0$, we have\begin{equation}\label{estimateAs}
\begin{aligned}
 A_s\leq &\frac{1}{[\omega]^n}\int_{\Omega_s}\epsilon_{1,j}(-\psi_{1,j}+\Lambda_{1,j})^{\frac{n}{n+1}}f\omega^n\\
 =&C(M,\omega,n)A_{s,j}^{\frac{1}{n+1}}\int_{\Omega_s}(-\psi_{1,j}+\Lambda_{1,j})^{\frac{n}{n+1}}f\omega^n\\
 \leq &C(M,\omega,n)A_{s,j}^{\frac{1}{n+1}}\left(\int_{\Omega_s}(-\psi_{1,j}+\Lambda_{1,j})^{\frac{np}{n+1}}f\omega^n\right)^{\frac{1}{p}}\left(\int_{\Omega_s}f\omega^n\right)^{1-\frac{1}{p}}\\
 \leq &C(M,\omega,n)A_{s,j}^{\frac{1}{n+1}}||(-\psi_{1,j}+\Lambda_{1,j})^{\frac{np}{n+1}}||_{L^{q^*}(\omega^n)}^{\frac{1}{p}}||f||_{L^q(\omega^n)}^{\frac{1}{p}}\phi(s)^{1-\frac{1}{p}}\quad  \quad (q^*=\frac{q}{q-1})\\
 \leq &C(M,\omega,n)A_{s,j}^{\frac{1}{n+1}}(||(-\psi_{1,j})||_{L^{\frac{npq^*}{n+1}}(\omega^n)}+\Lambda_{1,j})^{\frac{n}{n+1}}||f||_{L^q(\omega^n)}^{\frac{1}{p}}\phi(s)^{1-\frac{1}{p}}
\end{aligned}
\end{equation}
Let $j\rightarrow \infty$, we get \[
    A_s\leq c(n,M,\omega,p,q,(1-a)^{-n-1})A_s^{\frac{1}{n+1}}(1+||f||_{L^{q}(\omega^n)})^{\frac{n}{n+1}}||f||_{L^{q}(\omega^n)}^{\frac{1}{p}}\phi(s)^{1-\frac{1}{p}}.
\]
In the above inequality, we have used Tian's $\alpha$-invariant to bound the integral $\int_M(-\psi_{1,j})^{\frac{npq^*}{n+1}}\omega^n$ again.

Choose $p>n+1$, and let $\delta_0=\frac{n+1}{n}(1-\frac{1}{p})-1>0$.

Combine (\ref{tphisplust}) and (\ref{estimateAs}) we get $t\phi(t+s)\leq c(n,M,\omega,p,q,(1-a)^{-n-1}, ||f||_{L^q(\omega^n)})\phi(s)^{1+\delta_0}.$
Now we can use the following De Giorgi's iteration lemma to close the proof.
\begin{lemma}[De Giorgi]\label{DeGiorgiiterationlemma}Let $\phi:\RR\rightarrow \RR$ be a monotone decreasing function such for some $\delta_0>0$ and any $s\geq s_0,t>0,$\[
   t\phi(t+s)\leq C_0\phi(s)^{1+\delta_0}. 
\]
Then $\phi(s)=0$ for any $s\geq \frac{2C_0\phi^{\delta_0}(s_0)}{1-2^{-\delta_0}}+s_0$.
\end{lemma}
\end{proof}
{}
\begin{remark}As in \cite{MR4593734}, we can also just assume the $L^1(\log L)^p$-bound for $f$ and apply the Young inequality in the iteration process to get the uniform estimate. For our applications, the $L^p$-bound is enough since we are able to construct such comparison K\"ahler metric  $\omega_\psi$ and function $f$ with uniform $L^q$ norm for arbitrary large fixed $q$.
\end{remark}

As an application of the idea of relative estimates,  we show a relative uniform estimate for the complex Monge-Ampère equation on quasi-projective manifolds. Let $(M^n,\omega)$ be a compact K\"ahler manifold with a K\"ahler metric $\omega$. Let $D$ be a smooth divisor on $M$ and we use $L_D$ to denote the holomorphic line bundle defined by $D$. Fix a hermitian metric $h$ on $L_D$ and choose a defining section $s_D$ of $L_D$, i.e. $\{s_D=0\}=D$, with $||s_{D}||_{h}^2\leq 1$. Let $f$ be a smooth positive function on $M$. We consider the following Monge-Ampère equation on $M$:\begin{equation}\label{MAeq}
(\omega+\ddc\varphi)^n=\frac{f\omega^n}{||s||_h^2\log^2(||s||_h^2)}, \quad \sup \varphi=0.
\end{equation}

It's natural to look at the a priori estimate of solutions to equation \eqref{MAeq}  when study the problem of finding canonical metrics with cusp singularity, see \cite{MR3589348,MR3652249} and the reference therein. Our main result is the following\begin{theorem}\label{improvedboundtheorem}
Assume that $f$ is smooth function $M$ with $0<c_1\leq f\leq c_2$. There there exist constants $a_1,a_2,b_1,b_2,c_3,c_4$ depending on $M,\omega, n, c_1,c_2$ such that the unique solution $\varphi\in\cE(X,\omega)$ is smooth in $X\setminus D$, and moreover, \begin{equation}\label{doublebound}
    -a_1\log(b_1-\log(||s||_h^2))-c_1\leq\varphi\leq -a_2\log(b_2-\log(||s||_h^2))+c_2.
\end{equation}
\end{theorem}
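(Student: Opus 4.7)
The plan is to derive the two-sided bound \eqref{doublebound} by two applications of Proposition \ref{babyrelativeestimate}, using as comparison potential the cuspidal $\omega$-plurisubharmonic function
\[
    \psi := -\log(b - \log ||s||_h^2),
\]
where $b > 0$ is large enough that $\psi$ is $\omega$-psh (normalize $\sup_M \psi = 0$). A direct calculation using $\ddc \log ||s||_h^2 = -\Theta_h$ on $M\setminus D$ gives
\[
    \omega_\psi = \omega - \frac{\Theta_h}{b - u} + \frac{\sqrt{-1}\partial u \wedge \bar\partial u}{(b - u)^2}, \qquad u := \log ||s||_h^2.
\]
Since the rightmost term is a positive rank-one $(1,1)$-form, expanding $\omega_\psi^n$ and using the cofactor identity $\omega^{n-1}\wedge \sqrt{-1} dz_1 \wedge d\bar z_1 = (g^{\bar 1 1}/n)\,\omega^n$ in local coordinates near $D$ identifies the leading singular behavior
\[
    \omega_\psi^n = \frac{C_0(x)\,\omega^n}{||s||_h^2 \log^2 ||s||_h^2}\bigl(1 + o(1)\bigr) + O(1)\,\omega^n, \qquad x \to D,
\]
where $C_0 = C_0(M,\omega,h,n)$ is a positive smooth function on $D$. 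Crucially, this matches precisely the blow-up rate of the right-hand side of \eqref{MAeq}.

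For the lower bound, I would take $N$ large enough that $N\inf_D C_0 > c_2$ and $b$ large (depending on $N$) so that $N\psi$ is $\omega$-psh. Comparing volume forms then yields
\[
    \omega_\varphi^n \leq a^n\,\omega_{N\psi}^n + F\,\omega^n, \qquad a := \left(\tfrac{c_2(1+\eta)}{N\inf_D C_0}\right)^{1/n} \in [0,1),
\]
for small $\eta > 0$; the residual $F$ is bounded in $L^\infty$ because the leading cuspidal singularity of $\omega_\varphi^n$ is absorbed into the first term near $D$, while both sides are obviously bounded away from $D$. Proposition \ref{babyrelativeestimate} then yields $\varphi \geq aN\psi - C$, which is the desired lower bound with $a_1 := aN$.

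For the upper bound, I would take $t \in \bigl(0, c_1/(n\sup_D C_0)\bigr)$ and set $\Phi := t\psi$, which is automatically $\omega$-psh since $\omega_\Phi = (1-t)\omega + t\omega_\psi > 0$. A similar expansion of $\omega_\Phi^n$, using $f \geq c_1$ and the identity $\sum_{k\geq 1} k\binom{n}{k}t^k(1-t)^{n-k} = nt$ to extract the rank-one leading term, yields $\omega_\Phi^n \leq (a')^n\,\omega_\varphi^n + F'\omega^n$ with $a' = (nt\sup_D C_0/c_1)^{1/n} \in [0,1)$ and $F'$ bounded. Applying Proposition \ref{babyrelativeestimate} with the roles of the two potentials interchanged (so that $\Phi$ plays the role of $\varphi$, and $\varphi$ plays the role of $\psi$, in the proposition) yields $\Phi \geq a'\varphi - C'$, i.e., $\varphi \leq (t/a')\psi + C'/a'$, the upper bound with $a_2 := t/a'$.

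The main technical obstacle is that $\varphi$ and $\psi$ are only smooth on $M\setminus D$, while Proposition \ref{babyrelativeestimate} requires smooth K\"ahler metrics on $M$. The remedy is to work with the regularizations $\psi_\delta := -\log(b - \log(||s||_h^2 + \delta))$ and the smooth solution $\varphi_\delta$ of $\omega_{\varphi_\delta}^n = c_\delta f\omega^n/[(||s||_h^2 + \delta)\log^2(||s||_h^2 + \delta)]$ with $c_\delta \to 1$, and to verify that the volume-form inequalities above hold with residuals $F_\delta, F'_\delta$ uniformly bounded in $L^\infty$ independently of $\delta$. The most delicate step is in the tube $\{||s||_h^2 \lesssim \delta\}$: there, the positive rank-one part of $\ddc\psi_\delta$ is suppressed by an extra $|z_1|^2$ factor, but the contribution $\ddc u_\delta/(b - u_\delta)$ still produces a rank-one form of order $1/(\delta|\log\delta|)$, making $\omega_{N\psi_\delta}^n \sim \omega^n/(\delta|\log\delta|)$, which comfortably dominates $\omega_{\varphi_\delta}^n \sim \omega^n/(\delta \log^2 \delta)$ since $|\log\delta|\to \infty$. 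Once uniform estimates for $\varphi_\delta$ are obtained from Proposition \ref{babyrelativeestimate}, passage $\delta \to 0$ via standard stability of $\cE(M,\omega)$-potentials completes the proof.
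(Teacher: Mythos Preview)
Your comparison potentials and volume-form inequalities on $M\setminus D$ are exactly the right ones, and your lower-bound argument via the regularisation $\psi_\delta,\varphi_\delta$ is sound: in the tube $\{||s||_h^2\lesssim\delta\}$ you correctly observe that $\omega_{N\psi_\delta}^n\sim\omega^n/(\delta|\log\delta|)$ dominates $\omega_{\varphi_\delta}^n\sim\omega^n/(\delta\log^2\delta)$, so the residual $F_\delta$ stays bounded.

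The gap is in the \emph{upper} bound. There you need $\omega_{\Phi_\delta}^n\leq (a')^n\omega_{\varphi_\delta}^n+F'_\delta\omega^n$ with $F'_\delta$ uniformly bounded, but the very tube computation you did for the lower bound now works against you: at points with $||s||_h^2\ll\delta$ the term $\ddc u_\delta/(b-u_\delta)$ still forces $\omega_{\Phi_\delta}^n\sim nt\,\omega^n/(\delta|\log\delta|)$, whereas $\omega_{\varphi_\delta}^n\sim\omega^n/(\delta\log^2\delta)$ is smaller by a factor $|\log\delta|$. Hence $F'_\delta\gtrsim nt/(\delta|\log\delta|)\to\infty$ as $\delta\to0$, and Proposition~\ref{babyrelativeestimate} gives no uniform conclusion. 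Your proposal does not address this asymmetry; it only treats the tube for the lower bound.

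The paper avoids regularising $\varphi$ altogether. It works directly on $M\setminus D$, reproving the auxiliary-equation argument of Proposition~\ref{babyrelativeestimate} with an added barrier $\epsilon\log||s||_h^2$ in the test function $\Psi$; this pushes the maximum away from $D$. For the side that gives $\varphi\leq\frac{1}{a}\psi_1+C$ the barrier works immediately because $-\psi_1\sim\log|\log||s||_h^2|$ is dominated by $\epsilon|\log||s||_h^2|$. For the other side (yielding $\varphi\geq a\psi_2-C$) one needs $-\varphi+\epsilon\log||s||_h^2\to-\infty$ near $D$, and this is exactly where the preliminary rough bound of Lemma~\ref{roughuniformbound}, namely $-\varphi+a'\phi\leq C$ for $\phi=\log||s||_h^2+\log\log^2||s||_h^2$ and any small $a'>0$, is invoked: choosing $a'<\epsilon$ forces the maximum to be interior. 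If you want to repair your approach, you will likewise need some a~priori control on the blow-up of $\varphi$ (or of $\varphi_\delta$ in the tube) before the upper-bound comparison can close; the naive regularisation alone does not provide it.
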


The smoothness part and the relative lower bound of $\varphi$ is already proved in \cite{MR3652249} where the authors used the pluripotential theory and generalized  the concept of capacity to a relative setting to show it. They also give a relative upper bound (proposition 4.5 in \cite{MR3652249}) of $\varphi$ in the form of $-c(\log(-\log||s||_h^2))^{p}+C$, for any $p\in (0,1)$, but we improves it to the present form \eqref{doublebound} that $p$ can be $1$. 

Our strategy is as follows: firstly, we approximate the right hand side of equation by smooth functions, and show a rough apriori bound of $\varphi$. Such rough bound are already proved in \cite{MR3652249} with pluripotential theory but to be readers' convenience we use the PDE-method to show it again here.  The rough bound is enough for us to establish higher order estimates away from the divisor and pass it to the limit while doing approximation(at least on any compact subset of $M\setminus D$). Then, with such rough bounds, and regularity of solution on $M\setminus D$, the PDE-method can be applied to improve it to the present stated in the theorem by a relative comparison idea. 

\subsection{Rough bound}
In this section, we consider a more general class of complex Monge-Ampère equations on $M$,\begin{equation}\label{pshMA}
(\omega+\ddc\varphi)^n=ge^{-\phi},\quad \sup \varphi=0.
\end{equation} 
We assume that $\phi\in Psh(M,B\omega)\cap C^{\infty}$ for some constant $B$, $\phi\leq 0$ and $g$ is smooth. Fix an increasing function $\Phi(x):\mathbb{R}\rightarrow \mathbb{R}^+$ such that $\Phi(x)\rightarrow \infty$. The entropy of $ge^{-\phi}$ with respect to $\Phi$ is defined by $
    \cN_\Phi(ge^{-\phi})=\int_{M}ge^{-\phi}\Phi(\log(ge^{-\phi}))\omega^n.$
\begin{lemma}\label{roughuniformbound}Let $\varphi$ be a smooth solution to equation \eqref{pshMA}, for any $a\in(0,\frac{1}{B}), p>1$, there exist constant $C$ depending on $M,\omega,B,n,||g||_{L^p(\omega^n)},\cN_{\Phi}(ge^{-\phi}), p, a$, such that \[
    -\varphi+a\phi\leq C.
\]
\end{lemma}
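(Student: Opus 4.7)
The plan is to reduce the lemma to Proposition \ref{babyrelativeestimate} through a rescaling that converts $a\phi$ into the multiple of a genuine $\omega$-plurisubharmonic potential. Since $\phi\in Psh(M,B\omega)$ with $\phi\leq 0$, the function $\psi:=\phi/B-\sup(\phi/B)$ lies in $Psh(M,\omega)$ with $\sup\psi=0$. Setting $\tilde a:=aB\in(0,1)$, which is admissible because $a\in(0,1/B)$, the relation $a\phi=\tilde a\psi+a\sup\phi$ shows that any bound of the form $\varphi\geq\tilde a\psi-C$ gives $-\varphi+a\phi\leq C+a\sup\phi\leq C$ (using $\sup\phi\leq 0$). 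So I only need to verify the hypothesis of Proposition \ref{babyrelativeestimate} with parameters $\tilde a$ and $\psi$.

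Take $f:=ge^{-\phi}$. The equation $\omega_\varphi^n=f\omega^n$ together with $\tilde a^n\omega_\psi^n\geq 0$ immediately yields $\omega_\varphi^n\leq \tilde a^n\omega_\psi^n+f\omega^n$, so the hypothesis holds trivially. Consequently the entire proof reduces to producing a uniform $L^q(\omega^n)$-bound on $f$ for some $q>1$, with constants depending only on $M,\omega,B,n,\|g\|_{L^p},\mathcal{N}_\Phi(ge^{-\phi}),p,a$.

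The technical core is this $L^q$-estimate for $f=ge^{-\phi}$. I plan to combine Tian's $\alpha$-invariant with the hypothesis $g\in L^p(\omega^n)$: since $\psi\in Psh(M,\omega)$ is normalised, $\int_M e^{-\alpha\psi}\omega^n$ is uniformly bounded for $\alpha<\alpha_{M,\omega}$, which translates into a uniform $L^r$-bound on $e^{-\phi}$ for $r<\alpha_{M,\omega}/B$. H\"older's inequality applied to $g^q\cdot e^{-q\phi}$, pairing $\|g\|_{L^p}$ against Skoda's bound, then yields $f\in L^q(\omega^n)$ with some $q>1$ in the regime where $p\alpha_{M,\omega}>(pB+\alpha_{M,\omega})$.

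The main obstacle is the complementary regime, where this direct H\"older strategy does not produce $q>1$ (typically when $B$ is large compared to $\alpha_{M,\omega}$, or when the given $p$ is close to $1$). In that case, following Remark 2.2, I would replace the H\"older step in estimate \eqref{estimateAs} within the proof of Proposition \ref{babyrelativeestimate} by a Young--Orlicz inequality that pairs the entropy bound $\mathcal{N}_\Phi(ge^{-\phi})\leq K$ against the auxiliary potential $(-\psi_{1,j}+\Lambda_{1,j})^{n/(n+1)}$, whose Orlicz norms are uniformly controlled by Moser--Trudinger / Tian's $\alpha$-invariant applied to $\psi_{1,j}$. This keeps the De Giorgi iteration intact while accommodating an arbitrary increasing $\Phi$ with $\Phi(\infty)=\infty$, and closes the argument uniformly in the stated parameters.
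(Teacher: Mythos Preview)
Your reduction to Proposition~\ref{babyrelativeestimate} is clean, but it hinges entirely on obtaining an $L^q(\omega^n)$-bound (or an adequate Orlicz bound) on $f=ge^{-\phi}$, and this is precisely where the argument breaks down. Your H\"older step only yields $q>1$ when $B<\alpha_{M,\omega}(1-\tfrac{1}{p})$, as you note. In the complementary regime your fallback to a Young--Orlicz inequality as in Remark~2.2 does not close: that remark refers specifically to an $L^1(\log L)^p$ bound with $p>n$, which is what is needed for the De~Giorgi iteration to produce the power $\phi(s)^{1+\delta_0}$ with $\delta_0>0$. For an \emph{arbitrary} increasing $\Phi$ with $\Phi(\infty)=\infty$ (which is all the lemma assumes), the Legendre dual $\Phi^*$ can grow faster than any iterated exponential, and the Orlicz norm of $(-\psi_{1,j}+\Lambda_{1,j})^{n/(n+1)}$ with respect to $\Phi^*$ is simply not controlled by Tian's $\alpha$-invariant. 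So the iteration does not close, and no uniform bound follows.

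The paper's proof avoids this obstacle by a different mechanism that you are missing. Rather than trying to control $e^{-\phi}$ globally, it runs the auxiliary equation with the quantity $-\varphi+a\phi-s$ directly and exploits the \emph{pointwise} inequality $-a\phi\leq -\varphi-s$ on the sublevel set $\Omega_s=\{-\varphi+a\phi-s>0\}$. This lets one replace $e^{-\phi}$ by $e^{-\varphi/a}$ inside every integral over $\Omega_s$. Since $\varphi$ solves a Monge--Amp\`ere equation whose right-hand side has bounded $\Phi$-entropy, Theorem~\ref{mosertrudingertype}(5) (valid for \emph{any} increasing $\Phi$) gives $\int_M e^{-\lambda\varphi}\omega^n\leq C$ for every $\lambda>0$. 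One can then pair this against $\|g\|_{L^p}$ via H\"older with exponents as large as desired, bounding $A_{s,j}$ uniformly and closing the iteration. The trade of $\phi$ for $\varphi$ on $\Omega_s$ is the key idea you need; without it, there is no way to accommodate both large $B$ and weak $\Phi$ simultaneously.
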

\begin{proof}
The proof is similar as before. Without loss of generality, we normalize $\omega$ by $\int_M\omega^n=1.$ 
Let $\eta_j(x)=x+\sqrt{x^2+j^{-2}}$ which is a smooth approximation of the positive part $x_+$ of $x$. For any $s>0$, consider the auxiliary complex Monge-Ampère equation \[
    (\omega+\ddc\psi_{j})^n=\frac{\eta_{j}(-\varphi+a\phi-s)ge^{-\phi}\omega^n}{A_{s,j}},\quad \sup \psi_{j}=0,\]
where $A_{s,j}:=\int_M\eta_{j}(-\varphi+a\phi-s)g\omega^n$. 
Define the function $\Psi$ as \[
    \Psi:=(-\varphi+a\phi-s)-\epsilon_{j}(-\psi_{j}+\Lambda_{j})^{\frac{n}{n+1}},\]where \[\epsilon_j=\left(\frac{n+1}{n}\right)^{\frac{n}{n+1}}A_{s,j}^{\frac{1}{n+1}}, \quad \Lambda_j=\frac{n}{n+1}\frac{1}{(1-aB)^{n+1}}A_{s,j}.\]

    Then a similar argument by maximum principle as before together with the condition $B\omega+\ddc\phi\geq 0$ shows that $\Psi_j\leq 0.$ 
Now, we look at the quantity $A_{s,j}$, and claim that $A_{s,j}$ is uniformly bounded. The key observation is that on $\Omega_s$, we have $-a\phi\leq -\varphi-s$. By proposition 4.1 in \cite{MR4028264} or the Moser-Trudinger inequality proved in the next section, for any $\lambda>0$, we have $\int_{\Omega_s}e^{-\lambda\phi}\omega^n\leq C$ for some uniform constant $C$ which depends on $M,n,\omega,\Phi,\lambda, \cN_\Phi(ge^{-\phi}).$
Therefore, for large $j$, \[
\begin{aligned}
 A_{s,j}\leq & 2\int_{\Omega_s}(-\varphi+a\phi-s)ge^{-\phi}\omega^n\\
 \leq &2\int_{\Omega_s}(-\varphi+a\phi)ge^{-\frac{1}{a}\varphi}\omega^n\\
 \leq& C||g||_{L^p}||\varphi||_{L^{p_2}}||e^{-\frac{1}{a}\varphi}||_{L^{p_3}}\leq C,
\end{aligned}
\]for some uniform constant $C$, where $\frac{1}{p}+\frac{1}{p_2}+\frac{1}{p_3}=1.$
Hence the constant $\Lambda_{j}$ is uniformly bounded.  And the iteration process is the same as before and using the observation again that on $\Omega_s$, we have $-a\phi\leq -\varphi-s$. We omit the details.
\end{proof}

\begin{lemma}Assume in addition that $\ddc\log g\geq -D\omega$, then the solution to equation \eqref{pshMA} satisfies \[
    n+\Delta\varphi\leq Ce^{-\phi},
\]
For some constant $C$ depending on $M,n,\omega,B,D,||g||_{L^p(\omega^n)},\cN_{\Phi}(ge^{-\phi})$.
\end{lemma}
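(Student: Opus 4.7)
The plan is to apply the maximum principle to the auxiliary function
$$H := \log(n+\Delta\varphi) + \phi - A\varphi,$$
for a sufficiently large constant $A>0$. Since $\sup\varphi=0$ forces $\varphi\le 0$, any upper bound $H\le M$ on $M$ immediately yields $\log(n+\Delta\varphi)+\phi \le H+A\varphi\le M$, and after exponentiation this is the target estimate $n+\Delta\varphi\le e^M e^{-\phi}$.

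The heart of the argument is the computation of $\Delta_{\omega_\varphi}H$. By Aubin--Yau's second-order inequality applied to $\omega_\varphi^n=ge^{-\phi}\omega^n$ with $F=\log g-\phi$,
$$\Delta_{\omega_\varphi}\log(n+\Delta\varphi)\ge \frac{\Delta_\omega F}{n+\Delta\varphi}-C_0\,\tr_{\omega_\varphi}\omega,$$
where $C_0$ depends on a bisectional curvature lower bound for $\omega$. The troublesome piece is $-\Delta_\omega\phi/(n+\Delta\varphi)$, which has no a priori sign control. To handle it, set $\beta:=B\omega+\ddc\phi\ge 0$, $T:=\tr_\omega\beta$, $S:=\tr_{\omega_\varphi}\beta\ge 0$; diagonalizing $\omega$ and $\omega_\varphi$ simultaneously so that $\omega_\varphi$ has eigenvalues $\lambda_i$ with respect to $\omega$ gives $T=\sum\beta_{i\bar i}$ and $S=\sum\beta_{i\bar i}/\lambda_i$, and a direct Cauchy--Schwarz with the pair $(\sqrt{\beta_{i\bar i}/\lambda_i},\sqrt{\lambda_i})$ yields the elementary pointwise inequality
$$T\le S(n+\Delta\varphi),\qquad\text{hence}\qquad\frac{-\Delta_\omega\phi}{n+\Delta\varphi}\ge -S.$$
Combined with $\Delta_\omega\log g\ge -nD$ and the identity $\Delta_{\omega_\varphi}\phi=S-B\tr_{\omega_\varphi}\omega$, the $S$-terms cancel exactly, and one obtains
$$\Delta_{\omega_\varphi}H\ge (A-C_0-B)\tr_{\omega_\varphi}\omega-An-\tilde D,$$
for a constant $\tilde D=\tilde D(n,B,D)$.

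Choosing $A>C_0+B$, the maximum principle at the maximum point $x_0$ of $H$ forces $\tr_{\omega_\varphi}\omega(x_0)\le C_1$ for a controlled $C_1$. Converting this to a bound on $n+\Delta\varphi$ is now elementary: $\sum\lambda_i^{-1}=\tr_{\omega_\varphi}\omega(x_0)\le C_1$ forces each $\lambda_i\ge 1/C_1$, and the determinant relation $\prod\lambda_i=ge^{-\phi}$ then forces $\lambda_i\le C_1^{n-1}ge^{-\phi}$, so
$$(n+\Delta\varphi)(x_0)\le nC_1^{n-1}g(x_0)e^{-\phi(x_0)}.$$
Combining this with $\varphi\le 0$, the boundedness of $g$ (smooth on the compact manifold $M$), and Lemma~\ref{roughuniformbound} to control $-\varphi(x_0)$ in terms of the stated parameters, yields a uniform bound $H(x_0)\le M$, which concludes the proof.

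I expect the main technical obstacle to be establishing the Cauchy--Schwarz inequality $T\le S(n+\Delta\varphi)$ and confirming the exact cancellation of $S$-terms in the computation of $\Delta_{\omega_\varphi}H$; this cancellation is what allows the a priori uncontrolled quantity $-\Delta_\omega\phi/(n+\Delta\varphi)$ to be absorbed cleanly. Once this step is in hand, the remainder is a standard Aubin--Yau maximum-principle calculation together with the elementary conversion via the determinant relation of the Monge-Amp\`ere equation.
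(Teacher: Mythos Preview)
Your approach is essentially identical to the paper's: same test function $u=\log(n+\Delta\varphi)+\phi-\lambda\varphi$, same Aubin--Yau inequality, and the same cancellation mechanism---your Cauchy--Schwarz bound $T\le S(n+\Delta\varphi)$ is exactly the paper's inequality $\tr_{\omega_\varphi}\alpha\ge\tr_\omega\alpha/(n+\Delta\varphi)$ applied to $\alpha=B\omega+\ddc\phi$, producing the exact cancellation of the $S$-terms you describe. The conversion at the maximum via $\lambda_i\ge C_1^{-1}$ and the determinant relation is also the same as the paper's use of $(n+\Delta\varphi)\le(\tr_\varphi g)^{n-1}\omega_\varphi^n/\omega^n$.

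There is one genuine imprecision in your closing paragraph. Lemma~\ref{roughuniformbound} does \emph{not} bound $-\varphi(x_0)$; it bounds the combination $-\varphi(x_0)+a\phi(x_0)$ for $a\in(0,1/B)$. After you substitute $(n+\Delta\varphi)(x_0)\le nC_1^{n-1}g(x_0)e^{-\phi(x_0)}$ into $H(x_0)$, the $-\phi(x_0)$ from the logarithm cancels the $+\phi(x_0)$ already sitting in $H$, leaving
\[
H(x_0)\le \log(nC_1^{n-1})+\log g(x_0)-A\varphi(x_0),
\]
with \emph{no} $\phi(x_0)$ left to pair with $-A\varphi(x_0)$; so invoking the lemma ``to control $-\varphi(x_0)$'' does not close the argument. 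The paper instead carries the inequality $u(x)\le u(x_0)$ forward in the form
\[
\log(n+\Delta\varphi)(x)\le C+\lambda\varphi(x)-\lambda\varphi(x_0)-\phi(x)+\phi(x_0),
\]
and then applies Lemma~\ref{roughuniformbound} with $a=1/\lambda$ to the combination $-\lambda\varphi(x_0)+\phi(x_0)\le\lambda C$, which is precisely what that lemma controls; this yields $\log(n+\Delta\varphi)\le C+\lambda\varphi-\phi\le C-\phi$. You should rephrase your endgame in this way rather than claiming a bound on $-\varphi(x_0)$ alone. (Note also that invoking ``boundedness of $g$'' introduces dependence on $\sup g$ rather than $\|g\|_{L^p}$; the paper's presentation has the same feature.)
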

\begin{proof}Standard computation shows that \[
    \Delta_{\varphi}\log(n+\Delta\varphi)\geq \frac{\Delta\log g-\Delta\phi}{n+\Delta\varphi}-C\tr_{\varphi}g,
\]where $C$ is a constant depends on the lower bound of bisectional curvature of $\omega$.
By our assumption that $\Delta\log g\geq -D\omega$ and $\Delta\phi\geq -B\omega$, we have \[
\begin{aligned}
\Delta_\varphi\log(n+\Delta\varphi)\geq& -\frac{nD}{n+\Delta\varphi}-\frac{\Delta\phi}{n+\Delta\varphi}-C\tr_\varphi g\\
\geq &-(n^3D+C)\tr_\varphi g-\frac{\Delta\phi}{n+\Delta\varphi},
\end{aligned}
\]
where we have used the elementary inequality$(n+\Delta\varphi)\tr_{\varphi}g\geq n^2$ in the second line.     
Then we consider the function $u:=\log(n+\Delta\varphi)-\lambda\varphi+\phi$, then \[
\begin{aligned}
    \Delta_\varphi u\geq &(\lambda-n^3D-C-B)\tr_\varphi g-\frac{\tr_g (B\omega+\ddc\phi)}{n+\Delta\varphi}+\frac{Bn}{n+\Delta\varphi}-\lambda n+\tr_\varphi(B\omega+\ddc\phi)\\
    \geq &(\lambda-n^3D-C-B)\tr_\varphi g-\frac{\tr_g (B\omega+\ddc\phi)}{n+\Delta\varphi}-\lambda n+\frac{\tr_g(B\omega+\ddc\phi)}{n+\Delta\phi}
    \end{aligned}
\]
where we have used the inequality $\tr_\varphi\alpha\geq \frac{\tr_g\alpha}{n+\Delta\varphi}$ for any positive $(1,1)$-form $\alpha$. 
Choose $\lambda=n^3D+C+B+1$, we get \[
    \Delta_\varphi u\geq \tr_\varphi g-\lambda n.
\]So at the maximum of $u$, say $u(x_0)$, we have $\tr_\varphi g\leq C$ for some uniform constant $C$. Now by the elementary inequality 
\[
    n\left(\frac{\omega_\varphi^n}{\omega^n}\right)^{\frac{1}{n}}\leq (n+\Delta\varphi)\leq(\tr_\varphi g)^{n-1}\frac{\omega_\varphi^n}{\omega^n} ,
\]
We have $n+\Delta\varphi\leq C$ for some uniform constant $C$, and therefore \[
    u(x)\leq u(x_0)\leq C-\lambda\varphi(x_0)+\phi(x_0).
\]
So \[
    \log(n+\Delta\varphi)(x)\leq C+\lambda\varphi(x)-\lambda\varphi(x_0)-\phi(x)+\phi(x_0).
\]
By lemma \ref{roughuniformbound}, we have $-\varphi(x)+\frac{1}{\lambda}\phi(x)\leq C$ for some uniform constant $C$. Hence we arrive at \[
    \log(n+\Delta \varphi)\leq C+\lambda\varphi-\phi,
\]for some uniform constant $C$. 
\end{proof}
Let $\phi=-\log||s||_h^2-\log\log^2||s||_h^2$, straightforward computation shows that there exists a uniform constant $B$ depending on the curvature of $(L,h)$ such that $B\omega+\ddc\phi\geq 0.$ Approximate $\phi$ by smooth $B\omega$-psh functions shows  
\begin{proposition}The unique solution in $\cE(M,\omega)$ to equation (\ref{MAeq}) is indeed smooth in $M\setminus D$ and for any $a>0$, \[
 -\varphi+a\phi\leq C, \quad n+\Delta\varphi\leq Ce^{\lambda\varphi-\phi}
 \]
 \end{proposition}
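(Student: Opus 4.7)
The plan is to derive both estimates by applying Lemma \ref{roughuniformbound} and the Laplacian lemma to smooth approximations of the singular $\phi$, and then pass to the limit. We must verify (a) that $B\omega + \ddc\phi \geq 0$ for some constant $B$ independent of the approximation, and (b) that the approximate right-hand sides have uniformly bounded $L^p$-norm and entropy $\cN_\Phi$.

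For (a), writing $u := -\log ||s||_h^2$, which is quasi-psh and smooth on $M\setminus D$ with $\ddc u$ equal up to sign to the (bounded) Chern curvature of $(L,h)$, the identity $\ddc \log u = u^{-1}\ddc u - u^{-2}\, du\wedge d^c u$ gives the expansion $\ddc\phi = (1 - 2/u)\,\ddc u + 2u^{-2}\, du\wedge d^c u$ for $\phi = u - 2\log u$. Near $D$ the coefficient $1 - 2/u$ is close to $1$ and $\ddc u$ is bounded, while the remaining term is a nonnegative $(1,1)$-form; hence $B\omega + \ddc\phi \geq 0$ for some $B$ depending only on the curvature of $(L,h)$ and $\omega$. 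Replacing $||s||_h^2$ by $||s||_h^2+\epsilon$ produces smooth globally defined $B\omega$-psh approximations $\phi_\epsilon$, and by Yau's theorem we solve the regularized equations with smooth right-hand side to obtain smooth $\varphi_\epsilon$ normalized by $\sup\varphi_\epsilon = 0$.

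For (b), the $L^p$-bound on the density is immediate from $c_1 \leq f \leq c_2$ combined with the integrability of the appropriate power of the factor involving $||s||_h^2 \log^2 ||s||_h^2$, while the entropy $\cN_\Phi$ reduces, after the substitution $t = ||s||_h^2$ near $D$, to a convergent one-variable integral in $\Phi(|\log t|)$ for a suitable choice $\Phi \to \infty$. All such bounds are uniform in $\epsilon$. Applying Lemma \ref{roughuniformbound} and the Laplacian lemma to $\varphi_\epsilon$ and $\phi_\epsilon$ then yields $-\varphi_\epsilon + a\phi_\epsilon \leq C$ and $n + \Delta\varphi_\epsilon \leq C e^{\lambda\varphi_\epsilon - \phi_\epsilon}$ uniformly in $\epsilon$. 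On any compact $K \subset M\setminus D$ the function $e^{-\phi_\epsilon}$ is uniformly bounded, so $\Delta\varphi_\epsilon$ is uniformly $L^\infty$-bounded on $K$, and standard Evans-Krylov plus Schauder bootstrap produce uniform $C^{k,\alpha}(K)$-estimates; letting $\epsilon \to 0$ gives a smooth limit on $M\setminus D$ inheriting both inequalities, with uniqueness in $\cE(M,\omega)$ being classical. The main obstacle is orchestrating the approximation so that both hypotheses of the preceding lemmas --- especially the entropy bound, whose convergence relies on the precise log-log structure of $\phi$ near $D$ --- are met with constants independent of $\epsilon$; a secondary point is ensuring the admissible range of $a$ in Lemma \ref{roughuniformbound} is wide enough to justify the claim ``for any $a>0$,'' which is handled by taking $B$ as small as the Chern curvature of $(L,h)$ permits.
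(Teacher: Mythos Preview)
Your approach is essentially the same as the paper's: verify that $\phi$ is $B\omega$-psh, approximate by smooth $B\omega$-psh functions $\phi_\epsilon$, apply Lemma~\ref{roughuniformbound} and the Laplacian lemma uniformly in $\epsilon$, and pass to the limit via Evans--Krylov/Schauder on compacta of $M\setminus D$. The paper's own argument is terse (just the sentence about approximating $\phi$), and you have filled in the expected details.

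One point to correct: your handling of ``for any $a>0$'' does not work as written. The constant $B$ is fixed once $(L,h)$ and $\omega$ are fixed; you cannot shrink it further, so Lemma~\ref{roughuniformbound} only directly yields $-\varphi+a\phi\le C$ for $a\in(0,1/B)$. The extension to all $a>0$ is instead a monotonicity observation: in the sign convention of Lemma~\ref{roughuniformbound} one has $\phi\le 0$, so for any $a\ge a_0$ with $a_0\in(0,1/B)$,
\[
-\varphi+a\phi=(-\varphi+a_0\phi)+(a-a_0)\phi\le C+(a-a_0)\phi\le C.
\]
Combined with the direct application of the lemma for $a\in(0,1/B)$, this covers all $a>0$. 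You should also note (though it is routine) that the hypothesis $\ddc\log g\ge -D\omega$ of the Laplacian lemma is satisfied here since $g=f$ is smooth with $c_1\le f\le c_2$, hence $\ddc\log f$ is a bounded smooth form.
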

\subsection{Improved bound}
In this section, we give the proof of theorem \ref{improvedboundtheorem}.

For $i=1,2$ , we let $\psi_i=-A_i\log(\lambda_i-\log||s||_h^2)$, where $A_i, \lambda_i$ are  constants to be determined. Then direct calculation shows that \begin{lemma}[Auvray\cite{MR3589348}]\begin{enumerate}
    \item Fix $A_i>0$, for sufficient large $\lambda_i$ depending on $A_i,\omega$, the form $\omega+\ddc\psi_i$ defines a K\"ahler metric on $M\setminus D$.
    \item Locally, if $D\cap U=\{z_1=0\}$ for $U$ a neighborhood of some points on $D$, we have the asymptotic expansion $\omega+\ddc\psi_i=\frac{A_idz_1\wedge d\overline{z_1}}{|z_1|^2\log^2(|z_1|^2)}+\omega|_D+O(\frac{1}{|\log(|z_1|^2)|})$
\end{enumerate}
\end{lemma}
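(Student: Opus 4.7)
The plan is to analyze $\omega+\ddc\psi_i$ by a direct computation and compare with the model Poincar\'e cusp metric. Set $f:=-\log\|s\|_h^2$, which is smooth and nonnegative on $M\setminus D$ (using $\|s\|_h^2\leq 1$) and tends to $+\infty$ along $D$, so $\psi_i=-A_i\log(\lambda_i+f)$. The Poincar\'e-Lelong formula gives $\ddc f=\Theta_h$ on $M\setminus D$, where $\Theta_h$ is the Chern curvature of $(L_D,h)$ and is a smooth $(1,1)$-form extending across $D$. Applying the identity $\ddc\log u=\ddc u/u-\sqrt{-1}\,\partial u\wedge\bar\partial u/u^2$ to $u=\lambda_i+f$ gives
\begin{equation*}
\omega+\ddc\psi_i \;=\; \omega \;-\; \frac{A_i\,\Theta_h}{\lambda_i+f} \;+\; \frac{A_i\,\sqrt{-1}\,\partial f\wedge\bar\partial f}{(\lambda_i+f)^2}.
\end{equation*}

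Part (1) follows immediately from this formula. The third term is pointwise nonnegative; for the second, $f\geq 0$ and the $\omega$-norm of $\Theta_h$ is bounded by some $C=C(\omega,h)$, so its $\omega$-norm is at most $A_iC/\lambda_i$. Taking $\lambda_i\geq 2A_iC$ absorbs the negative contribution into $\tfrac{1}{2}\omega$, giving $\omega+\ddc\psi_i\geq\tfrac{1}{2}\omega>0$ on $M\setminus D$, which proves that it is a K\"ahler metric there.

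For part (2), work in local coordinates with $D\cap U=\{z_1=0\}$ and $s_D=z_1 e_D$ for a local frame $e_D$ of $L_D$, writing $\|s\|_h^2=|z_1|^2\rho$ for smooth positive $\rho$ and $g:=\log\rho$, so $f=-\log|z_1|^2-g$. Then
\begin{equation*}
\sqrt{-1}\,\partial f\wedge\bar\partial f \;=\; \frac{\sqrt{-1}\,dz_1\wedge d\bar z_1}{|z_1|^2} \;+\; \frac{\sqrt{-1}\,dz_1\wedge\bar\partial g}{z_1} \;+\; \frac{\sqrt{-1}\,\partial g\wedge d\bar z_1}{\bar z_1} \;+\; \sqrt{-1}\,\partial g\wedge\bar\partial g.
\end{equation*}
Since $\lambda_i+f=-\log|z_1|^2+O(1)$, dividing by $(\lambda_i+f)^2$ produces the leading cusp term $A_i\,\sqrt{-1}\,dz_1\wedge d\bar z_1/(|z_1|^2\log^2|z_1|^2)$; the two mixed terms carry the extra factor $1/|z_1|$ which, measured against the cusp norm in the mixed directions, contributes an error of size $O(1/|\log|z_1|^2|)$; and the smooth last term is $O(1/\log^2|z_1|^2)$. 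The contribution $-A_i\,\Theta_h/(\lambda_i+f)$ is smooth with norm $O(1/|\log|z_1|^2|)$. Finally, choosing coordinates adapted to $\omega$ at the given point of $D$ (so that $\omega$ has no mixed $dz_1\wedge d\bar z_j$ components there), a Taylor expansion of $\omega$ transverse to $D$ gives $\omega=\omega|_D+O(|z_1|)$ up to a bounded $dz_1\wedge d\bar z_1$ component which has negligible cusp-relative size $O(|z_1|^2\log^2|z_1|^2)$. Collecting all the pieces yields the stated expansion.

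The main technical obstacle is the bookkeeping in part (2): one must verify carefully that each of the mixed cross terms and the various smooth remainders are indeed of the claimed lower order relative to the cusp model metric, since these have mixed scaling behaviors in $|z_1|$ and $\log|z_1|^2$; the underlying Poincar\'e-Lelong computation and the positivity argument in (1) are routine.
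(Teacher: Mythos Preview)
The paper does not prove this lemma; it is quoted from Auvray \cite{MR3589348} and used as a black box. Your direct computation is the standard approach and is essentially correct: the formula
\[
\omega+\ddc\psi_i=\omega-\frac{A_i\Theta_h}{\lambda_i+f}+\frac{A_i\sqrt{-1}\,\partial f\wedge\bar\partial f}{(\lambda_i+f)^2}
\]
is right, the positivity argument for (1) is clean, and the local expansion bookkeeping in (2) is accurate. One minor point: your step ``choosing coordinates adapted to $\omega$ at the given point of $D$ so that $\omega$ has no mixed $dz_1\wedge d\bar z_j$ components there'' only kills the mixed components at a single point, not in a neighborhood. This is harmless, though, because a bounded mixed term $O(1)\,dz_1\wedge d\bar z_j$ has cusp-norm $O(|z_1|\,|\log|z_1|^2|)$, which is already $o(1/|\log|z_1|^2|)$ without any coordinate adaptation; so the expansion holds uniformly near $D$ regardless.
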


As a consequence for any $a\in (0,1)$, we have the following comparing inequality on  $M\setminus D$ for suitable chosen $A_i,\lambda_i$,\begin{equation}\label{comparisonineq}
    \omega_{\psi_1}^n\leq a^n\frac{f\omega^n}{||s||^2\log^2(||s||^2)}+C_{A_1,\lambda_1,c_1,a,M,D}\omega^n,\end{equation}
    and \[
    \frac{f\omega^n}{||s||^2\log^2(||s||^2)}\leq a^n\omega^{n}_{\psi_2}+C_{A_2,\lambda_2,c_2,a,M,D}\omega^n.\]
where we have used the assumption $c_1\leq f\leq c_2$. Now we can prove the improved relative estimate. 
\begin{proof}[ Proof of theorem \ref{improvedboundtheorem}]
Let $\eta_k(x)$ be a sequence of smooth positive functions, approximating $x_+$ from above. Consider the auxiliary equation \[
    (\omega+\ddc\psi_{s,k})^n=\frac{\eta_{k}(-\psi_1+a\varphi+\epsilon\log||s||^2-s)\omega^n}{A_{s,k}},\quad \sup \psi_{s,k}=0.
\]
Define function $\Psi$ as \[
    \Psi(x)=-\psi_1+a\varphi+\epsilon\log||s||^2-s-\epsilon_{k,s}(-\psi_{s,k}+\Lambda_{k,s})^{\frac{n}{n+1}},
\]where \[
    \epsilon_{k,s}=\left(\frac{n+1}{n}\right)^{\frac{n}{n+1}}C_{A_1,\lambda_1,c_1,a,M,D}^{\frac{1}{n+1}}A_{s,k}^{\frac{1}{n+1}},\quad \Lambda_{k,s}=\frac{2^{n+1}}{(1-a)^{n+1}}\frac{n}{n+1}C_{A_1,\lambda_1,c_1,a,M,D}A_{s,k}.
\]
We claim that $\Psi\leq 0 $. Since $-\psi_1+\epsilon\log||s||^2\rightarrow -\infty$ when $x\rightarrow D$ for any $\epsilon>0$. We can assume that $x_0\notin D$ (in fact this holds for all $k$) and $x_0\in \Omega_{s}:=\{-\psi_1+a\varphi+\epsilon\log||s||^2-s>0\}$. At $x_0$, we have \[
    \begin{aligned}
    0\geq &-\ddc\psi_1+a\varphi+\epsilon\ddc\log||s||^2+\epsilon_{k,s}\frac{n}{n+1}(-\psi_{s,k}+\Lambda_{k,s})^{-\frac{1}{n+1}}\ddc\psi_{s,k}\\
    \geq &-\omega_{\psi_1}+a\omega_\varphi-\epsilon c(D,h)+(1-a-\epsilon_{k,s}\frac{n}{n+1}(\Lambda_{k,s})^{-\frac{1}{n+1}})\omega\\
    &+\epsilon_{k,s}\frac{n}{n+1}(-\psi_{s,k}+\Lambda_{k,s})^{-\frac{1}{n+1}}\omega_{\psi_{s,k}}
    \end{aligned}
\]In the second line, $c(L,h)$ is the curvature of the line budnle $L$ with hermitian metric $h$, which is a smooth form on $M$. So  if we take $\epsilon\leq \frac{1-a}{2||c(L,h)||_\omega}$, we have \[
    -\epsilon c(L,h)+(1-a-\epsilon_{k,s}\frac{n}{n+1}(\Lambda_{k,s})^{-\frac{1}{n+1}})\omega\geq 0.
\]
So \[
    \omega_{\psi_1}\geq a\omega_{\varphi}+\epsilon_{k,s}\frac{n}{n+1}(-\psi_{s,k}+\Lambda_{k,s})^{-\frac{1}{n+1}}\omega_{\psi_{s,k}}.
\]
Tanking $n$-th power and combine with the comparison inequality \eqref{comparisonineq}, we get \[
    a^n\omega_\varphi^n+C_{A_1,c_1,a,M,D}\omega^n\geq\omega_{\psi_{1}}^n\geq a^n\omega_{\varphi}^n+\left(\epsilon_{k,s}\frac{n}{n+1}\right)^n(-\psi_{s,j,k}+\Lambda_{k,s})^{-\frac{n}{n+1}}\omega_{\psi_{s,j,k}}^n.
\]
With our choice of $\epsilon_{k,s},\Lambda_{k,s}$, we have \begin{equation}\label{psiless0}
    \eta_{k}(-\psi_1+a\varphi+\epsilon\log||s||^2-s)\leq \epsilon_{k,s}(-\psi_{s,k}+\Lambda_{k,s})^{\frac{n}{n+1}}. 
\end{equation}
So from $\eta_k(x)\geq x_+$, we get the desired $\Psi\leq 0.$
The iteration process is similar as previous, and we get \[
-\psi_1+a\varphi+\epsilon\log||s||^2\leq C,
\] for some uniform $C$ which is independent on $\epsilon$. Let $\epsilon\rightarrow 0,$ we get one side inequality of theorem \ref{improvedboundtheorem}. The proof for the other side is similar. But this time we need use lemma \ref{roughuniformbound} to say that the maximum of $\Psi_2:=-\varphi+a\psi_2+\epsilon\log||s||^2-s-\epsilon_{2,k,s}(-\psi_{k,s}+\Lambda_{2,k,s})^{\frac{n}{n+1}}$ is not attained on $D$.

\end{proof}

\section{A unified approach to the Trudinger-type estimate and \texorpdfstring{$L^\infty$}{linfty}-estimate}
In this section, we use the relative $L^\infty$-estimate to prove the following Moser-Trudinger type estimate
 \begin{theorem}\label{mosertrudingertype}Let $(M^n,\omega)$ be a smooth compact K\"ahler manifold
 and $\omega_\varphi$ be a smooth K\"ahler metric such that $\omega_\varphi^n=e^F\omega^n$ for some smooth function $F$ with $\int_Me^F\omega^n=\int_M\omega^n$. Let $\Phi(x)$ be an increasing function mapping $\mathbb{R}$ to $\mathbb{R}^+$. Then there exists a constant $C_1,C_2>0$ depending on $M,n, \omega,\int_M\Phi(F)e^F\omega^n,\Phi$ such that 

\begin{enumerate}
\item If $\Phi(x)=x^p$, when $x>2$, $0<p<n$, \[
    \int_M\exp\{C_1(-\varphi)^{\frac{n}{n-p}}\}\omega^n\leq C_2.
\]
\item If $\Phi(x)=x^n$ when $x>2$,\[
    \int_M\exp\{\exp{C_1(-\varphi)}\}\omega^n\leq C_2.\]
\item If $\Phi(x)=x^n(\log x)^p$ when $x>2$, $0<p<n$, \[
    \int_M\exp\{\exp\{C_1(-\varphi)^{\frac{n}{n-p}}\}\}\omega^n\leq C_2.
\]
\item If $\int_1^\infty\Phi^{-\frac{1}{n}}(x)dx<\infty,$ \[
    -\varphi\leq C_2.
\]
\item If $\Phi(x)$ is just increasing to $\infty$, we have \[
    \int_Me^{-\lambda\varphi}\omega^n\leq C_3 \quad \forall \lambda>0, \text{ with } C_3 \text{ additionally depends on }\lambda.
\]
\end{enumerate}
\end{theorem}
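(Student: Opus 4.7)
The strategy combines the relative $L^\infty$-estimate (Proposition~\ref{babyrelativeestimate}) with a comparison construction driven by the entropy bound $\cN_\Phi(F)$. The aim is first to establish the intermediate estimate
\[
-\varphi \leq C_1\, h(-\beta\psi + C_2) + C_3
\]
for an auxiliary $\omega$-psh function $\psi$ with $\sup\psi = 0$ and any constant $\beta > 0$. The construction of $\psi$ proceeds by splitting $e^F\omega^n$ at a threshold $T$ to be chosen large in terms of $\cN_\Phi(F)$: the bulk part on $\{F \leq T\}$ contributes a function $f \leq e^T$ that is uniformly bounded and thus lies in $L^q(\omega^n)$ for any $q > 1$, while $\psi$ is defined by solving an auxiliary complex Monge--Amp\`ere equation whose right-hand side is proportional to the tail $e^F \mathbf{1}_{F > T}\omega^n$, normalized to total volume $\int\omega^n$. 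The entropy bound $\int_{F > T}e^F\omega^n \leq \cN_\Phi(F)/\Phi(T)$ controls the normalizing constant and yields $\omega_\varphi^n \leq a^n \omega_\psi^n + f\omega^n$ with $a \in (0,1)$ depending explicitly on $T$ and the entropy.

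\textbf{Linear bound and refinement to the $h$-bound.} Applying Proposition~\ref{babyrelativeestimate} to this comparison gives the linear relative bound $\varphi \geq a\psi - C$, with $C$ controlled by $(1-a)^{-n-1}$ and $\|f\|_{L^q}$. To upgrade this to the $h$-weighted estimate, I would rerun the Guo--Phong--Tong maximum-principle argument with a test function built from $h$: namely, $\Psi = (-\varphi - s) - \epsilon\, h(-\psi_1 + \Lambda)$, where $\psi_1$ solves a further auxiliary Monge--Amp\`ere equation with right-hand side proportional to $\eta(-\varphi - s) e^F\omega^n$ (exactly as in the proof of Proposition~\ref{babyrelativeestimate}). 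The identity $h'(s) = \Phi^{-1/n}(s)$ is designed so that the maximum-principle computation closes on the critical set, forcing $\Psi \leq 0$; combined with De Giorgi's iteration (Lemma~\ref{DeGiorgiiterationlemma}) on the distribution function $\phi(s) := \int_{-\varphi > s}e^F\omega^n$, this yields the $h$-bound with constants depending on $\beta, \cN_\Phi(F), \Phi, n, \omega, T$.

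\textbf{Case analysis.} Once the $h$-bound is in hand, each of the five cases follows by inserting the explicit form of $h(s) = \int_0^s \Phi^{-1/n}(t)\,dt$ and applying Tian's $\alpha$-invariant to $\psi$, which gives $\int e^{-\lambda\psi}\omega^n \leq C_\lambda$ whenever $\lambda < \alpha_{M,\omega}$. For $\Phi(x) = x^p$ with $p < n$, $h(s) \sim s^{(n-p)/n}$ converts the bound to $(-\varphi)^{n/(n-p)} \lesssim -\beta\psi + C$, yielding $\int \exp\bigl(C_1(-\varphi)^{n/(n-p)}\bigr)\omega^n \leq C_2$ after choosing $\beta$ small. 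For $\Phi(x) = x^n$, $h(s) \sim \log s$ gives double-exponential integrability $\int\exp\exp(C_1(-\varphi))\omega^n \leq C_2$. The case $\Phi(x) = x^n(\log x)^p$ is a combination with $h(s) \sim (\log s)^{(n-p)/n}$. If $\int^\infty \Phi^{-1/n} < \infty$ then $h$ is bounded on $[0,\infty)$, so $-\varphi \leq C_2$. For the general case $\Phi \to \infty$, taking $\beta$ small relative to $\lambda$ makes $\lambda h(-\beta\psi)$ dominated by $c(-\psi)$ with $c < \alpha_{M,\omega}$, giving $\int e^{-\lambda\varphi}\omega^n < \infty$.

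\textbf{Main obstacle.} The central technical step is the maximum-principle argument with the $h$-weighted test function. The function $h$ is concave (since $h'' = -\tfrac{1}{n}\Phi^{-1-1/n}\Phi' \leq 0$ when $\Phi$ is increasing), so one must verify that this concavity does not reverse the crucial pointwise inequality that drives the argument, and select $\epsilon, \Lambda$ so that the De Giorgi iteration closes against the entropy-controlled residual. The advantage over the Guo--Phong--Tong framework, noted in the introduction, is precisely that our comparison function $f$ is uniformly $L^q$-bounded for every $q > 1$, which eliminates the need for a separate uniform energy estimate in the iteration.
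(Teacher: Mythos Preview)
Your proposal contains a genuine gap in the ``upgrade'' step, and the construction of the comparison metric $\psi$ differs from the paper's in a way that matters.

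\textbf{What the paper actually does.} The paper builds the $h$-structure directly into the comparison function. It sets $\psi_1$ to be the solution of
\[
\omega_{\psi_1}^n=\frac{\Phi(F)e^F\omega^n}{\cN_\Phi(F)},\qquad \sup\psi_1=0,
\]
and then defines $\psi:=-h\bigl(-\tfrac{\alpha}{q}\psi_1+\Lambda_1\bigr)$. Concavity of $h$ makes $\psi$ an $\omega$-psh function, and a pointwise dichotomy (either $\Phi(F)$ is large, in which case $\omega_\varphi^n\leq a^n\omega_\psi^n$, or $F\leq -\tfrac{\alpha}{q}\psi_1+\Lambda_1$) gives
\[
\omega_\varphi^n\leq a^n\omega_\psi^n+f\omega^n,\qquad f=\exp\Bigl(-\tfrac{\alpha}{q}\psi_1+\Lambda_1\Bigr),
\]
with $\|f\|_{L^q}$ uniformly bounded by the $\alpha$-invariant. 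Now a \emph{single} application of Proposition~\ref{babyrelativeestimate} yields $\varphi\geq a\psi-C$, which is already
\[
-\varphi\leq a\,h\Bigl(-\tfrac{\alpha}{q}\psi_1+\Lambda_1\Bigr)+C.
\]
No separate ``upgrade'' is needed; the case analysis then follows exactly as you describe.

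\textbf{Where your argument breaks.} Your truncation construction $\omega_\psi^n\propto e^F\mathbf{1}_{\{F>T\}}\omega^n$ together with Proposition~\ref{babyrelativeestimate} gives only the linear bound $-\varphi\leq -a\psi+C$, which for fixed $T$ yields exponential integrability of $-\varphi$ only up to exponent $\alpha/a$; that is enough for case~(5) but not for the sharper cases (1)--(3). Your proposed fix is a second maximum-principle step with the test function $\Psi=(-\varphi-s)-\epsilon\,h(-\psi_1+\Lambda)$, where $\psi_1$ solves an auxiliary equation with right-hand side $\propto\eta(-\varphi-s)e^F\omega^n$. But carrying this through, at a maximum point you obtain
\[
\eta(-\varphi-s)\leq A_s\,\epsilon^{-n}\,\Phi(-\psi_1+\Lambda),
\]
which does \emph{not} translate into $\Psi\leq 0$: there is no algebraic identity linking $\Phi(-\psi_1+\Lambda)$ back to $h(-\psi_1+\Lambda)$ in the way the power $(\cdot)^{n/(n+1)}$ closes in Proposition~\ref{babyrelativeestimate}. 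The special structure that makes the paper's argument close is that $h$ is applied to the entropy-weighted potential $\psi_1$ (with RHS $\Phi(F)e^F$), not to the level-set auxiliary potential; this is precisely what allows the dichotomy and the $L^q$-bound on $f$ to coexist.
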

\begin{remark}(1) is proved in \cite{MR4593734} by ABP-estimate and lately generalized to degenerated class in \cite{guo2022uniform}. (2) and (3) are generalizations of \cite{guo2022uniform}, and $(5)$ is also proved in \cite{MR4028264}.
\end{remark}

\begin{proof}Let's consider a function $h(s):\mathbb{R}^+\rightarrow \mathbb{R}$, which is increasing, concave. We use $\cN_\Phi(F)$ to denote the integral $\int_Me^F\Phi(F)\omega^n$. For simplicity, we normalize the volume of $\omega$ to be 1 in the following. Let $\psi_1$ be the solution to \begin{equation}\label{defpsi1}
\omega_{\psi_1}^n=\frac{\Phi(F)e^F\omega^n}{\cN_\Phi(F)}, \quad \sup\psi_1=0.
\end{equation}

In this section we use $\alpha$ to denote the $\alpha$-invariant with respect to $(M,\omega)$, and for any $q>1$ we define $\psi=-h(-\frac{\alpha}{q}\psi_1+\Lambda_1)$, then 
\[\ddc\psi=\alpha q^{-1}h'\ddc\psi+(-h'')\alpha^2q^{-2}\sqrt{-1}\partial\psi_1\wedge\bar{\partial}\psi_1\geq \alpha q^{-1}h'\ddc\psi_1,\]
since $h$ is concave. 
Hence, \[\omega+\ddc\psi\geq \frac{\alpha}{q}h'\omega_{\psi_1} \text{ when }\frac{\alpha}{q}h'(-\frac{\alpha}{q}\psi_1+\Lambda_1)\leq 1.\] 
Combining with equation \eqref{defpsi1}, we get $$\omega_\varphi^n=e^F\omega^n\leq \alpha^{-n}q^{n}h^{'-n}\frac{\cN_\Phi(F)}{\Phi(F)}\omega_{\psi}^n $$.

Now, for any $a\in(0,1)$, there are two cases, \begin{enumerate}
\item $a^n\alpha^nq^{-n}h^{'n}\Phi(F)\cN_\Phi(F)^{-1}\geq 1$;
\item $a^n\alpha^nq^{-n}h^{'n}\Phi(F)\cN_\Phi(F)^{-1}< 1$.
\end{enumerate}
For the first case, we have \begin{equation}
\omega_\varphi^n\leq a^n\omega_\psi^n.
\end{equation}
For the second case, we have \begin{equation*}\label{Fbound}
\Phi(F)\leq a^{-n}\alpha^{-n}q^{n}\cN_\Phi(F) h^{'-n}(-\frac{\alpha}{q}\psi_1+\Lambda_1).
\end{equation*}

Now we define $h$ as \begin{equation}\label{definitionofh}
    h(s)=\int_0^sa^{-1}\alpha^{-1}q\cN_\Phi(F)^{\frac{1}{n}}\Phi(t)^{-\frac{1}{n}}dt.
\end{equation}
Note that $h$ is defined to satisfy the equation \[
    \Phi(s)=a^{-n}\alpha^{-n}q^{n}\cN_\Phi(F) h^{'-n}(s).
\]
Since $\Phi(x)$ is increasing to $\infty$ as $x\rightarrow \infty$, we can choose $\Lambda_1$ large enough to ensure $\frac{\alpha}{q}h'(\Lambda_1)\leq 1$, i.e. $\Phi^{\frac{1}{n}}(\Lambda_1)\geq a^{-1}\cN^{\frac{1}{n}}_\Phi(F)$.

Because of the monotonicity of $\Phi$, we get in the second case \[
    F\leq -\frac{\alpha}{q}\psi_1+\Lambda_1.
\]
So we choose $f=\exp\{-\frac{\alpha}{q}\psi_1+\Lambda_1\}$, and in both cases, we have \[
    \omega_\varphi^n\leq a^n\omega_{\psi}^n+f\omega^n,
\] with \[
    ||f||_{L^q(\omega^n)}\leq C(M,\omega,a)e^{\Lambda_1}\leq C(M,\omega,q,a,\cN_{\Phi}(F),\Phi)
\]

By proposition \ref{babyrelativeestimate}, we get \[
    -\varphi-ah(-\frac{\alpha}{q}\psi_1+\Lambda_1)\leq C(M,\omega,q,a,\cN_{\Phi}(F),\Phi).
\]
 
Next we examine the functions $h$ with different choice of $\Phi$ and close the proof of the theorem.
\begin{enumerate}
\item If $\Phi(x)=|x|^p,0<p<n$, then $h(s)=a^{-1}\alpha^{-1}q\cN_\Phi(F)^{\frac{1}{n}}\frac{n}{n-p}s^{1-\frac{p}{n}}$. 

\item If $\Phi(x)=|x|^n$, $h(s)=a^{-1}\alpha^{-1}q\cN_\Phi(F)^{\frac{1}{n}}\log s$.
\item If $\Phi(x)=|x|^n(\log|x|)^p, 0<p<n,$ then $h(s)=a^{-1}\alpha^{-1}q\cN_\Phi(F)^{\frac{1}{n}}\frac{n}{n-p}(\log s)^{1-\frac{p}{n}}$, and $\Lambda_1$ is determined by $\Lambda_1(\log\Lambda_1)^{\frac{p}{n}}=a^{-1}\cN_\Phi(F)^{\frac{1}{n}}$.

In those three cases, there is no uniform bound on the function $h(-\frac{\alpha}{q}\psi_1+\Lambda_1)$ which are different from the next case.
\item If $\int_1^\infty\Phi^{-\frac{1}{n}}(x)sx<\infty$, then $h$ is bounded.
\item If $\Phi(x)$ is just increasing to $\infty$ as $x\rightarrow \infty$, then for any $\epsilon>0$, there exists constant $C_\epsilon$ such that $h(s)\leq \epsilon s+C_\epsilon$. 
\end{enumerate}
Then the integral bound of $\int_{M}\exp{(-\alpha\psi_1)}\omega^n\leq C$ gives the desired estimates in the theorem.
\end{proof}
We give an application of the above theorem to show $L^\infty$ estimate for cscK equation with an integral bound of the scalar curvature. 
\begin{corollary} Let $(M^n,\omega)$ be a compact K\"ahler manifold with and $\omega+\ddc\varphi$ be another K\"ahler metric with scalar curvature $R_\varphi$. Assume $\omega_\varphi^n=e^F\omega^n$ for some smooth function $F$ with $\int_Me^F\omega^n=\int_M\omega^n$. Let $\Phi_1(x),\Phi_2(x)$ be two positive increasing function mapping $\mathbb{R}$ to $\mathbb{R}^+$ and approach $\infty$ as $x\rightarrow \infty$. There exists a constant $C$ depending on $M,\omega, n, \int_M\Phi_1(F)e^F\omega^n, \int_M\Phi_2(\log(|R_\varphi|^ne^F))|R_\varphi|^ne^F\omega^n$ such that \[
    -\varphi-\sup\varphi\leq C.
\]
\end{corollary}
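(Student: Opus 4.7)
The plan is to combine two applications of Theorem \ref{mosertrudingertype} with the cscK identity $-\Delta_\varphi F = R_\varphi - \tr_{\omega_\varphi}\Ric(\omega)$, which follows from $\Ric(\omega_\varphi)=\Ric(\omega)-\ddc F$. The hypothesis on $\Phi_1$ is an entropy bound for the volume form $e^F\omega^n$, while the hypothesis on $\Phi_2$ is an entropy bound for the auxiliary measure $|R_\varphi|^n e^F\omega^n$. The idea is to feed both into the scheme of Section 3, using the second to strengthen the first.

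First, I would set up an auxiliary Monge--Amp\`ere equation. Let $\kappa = \frac{1}{V_\omega}\int_M |R_\varphi|^n e^F\omega^n$ and solve
\begin{equation*}
\omega_\psi^n = \kappa^{-1}|R_\varphi|^n e^F\omega^n,\qquad \sup\psi=0.
\end{equation*}
The hypothesis then reads $\cN_{\Phi_2}(\tilde F)\leq C$ with $\tilde F = -\log\kappa + n\log|R_\varphi| + F$ (the additive constant $\log\kappa$ being harmless by monotonicity of $\Phi_2$), so Theorem \ref{mosertrudingertype}(5) applied with $\Phi_2$ yields $\int_M e^{-\lambda\psi}\omega^n\leq C_\lambda$ for every $\lambda>0$. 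The key identity coupling $\omega_\psi$ to $\omega_\varphi$ is $|R_\varphi|^n\omega_\varphi^n = \kappa\,\omega_\psi^n$, which is immediate from $\omega_\varphi^n = e^F\omega^n$ and the defining equation of $\psi$.

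Second, I would use the cscK equation to convert the exponential integrability of $-\psi$ into an improved entropy bound on $F$. Testing $-\Delta_\varphi F = R_\varphi - \tr_{\omega_\varphi}\Ric(\omega)$ against a suitable convex function of $F$, integrating against $\omega_\varphi^n$, and applying H\"older together with the identity $|R_\varphi|^n\omega_\varphi^n = \kappa\,\omega_\psi^n$, the scalar-curvature term $\int|R_\varphi|\cdot(\text{test})\,\omega_\varphi^n$ is dominated by $\kappa^{1/n}\bigl(\int (\text{test})^{n/(n-1)}\omega_\psi^n\bigr)^{(n-1)/n}$, which in turn is controlled by the exponential integrability of $-\psi$ from the previous step. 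Tuning the test function to $\Phi_1$ should upgrade the given entropy $\int \Phi_1(F)e^F\omega^n\leq K_1$ to a stronger one $\int \tilde\Phi(F)e^F\omega^n\leq K$ for some $\tilde\Phi$ satisfying $\int^\infty \tilde\Phi(t)^{-1/n}\,dt < \infty$. Theorem \ref{mosertrudingertype}(4) applied with $\tilde\Phi$ then produces the desired estimate $-\varphi - \sup\varphi\leq C$.

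The main obstacle is the second step: quantitatively executing integration by parts on the potentially degenerate K\"ahler manifold $(M,\omega_\varphi)$. Without any a priori non-degeneracy of $\omega_\varphi$, Sobolev and elliptic regularity inputs must be routed through the non-degenerate auxiliary potential $\psi$, and the balancing of exponents needed to guarantee $\int^\infty\tilde\Phi^{-1/n}<\infty$ requires a delicate choice of the test function relative to both $\Phi_1$ and $\Phi_2$. A secondary subtlety is the a priori control of $\kappa$: one needs to verify that $\kappa$ is bounded away from zero and infinity in terms of the given data, which follows from the entropy hypothesis on $\Phi_2$ combined with Jensen's inequality.
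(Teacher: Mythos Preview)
Your first step --- solving an auxiliary Monge--Amp\`ere equation with right-hand side proportional to $|R_\varphi|^n e^F\omega^n$ and applying Theorem \ref{mosertrudingertype} to obtain exponential integrability of the potential --- matches the paper exactly (the paper calls this potential $\psi_2$). The divergence is in the second step, and the gap you yourself flag is real.

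The paper does \emph{not} integrate by parts. Instead it applies the maximum principle directly to the scalar function
\[
\Psi = F - \lambda\varphi + \tfrac{B^{1/n}}{n}\psi_2 + b\psi - \epsilon(-\psi_3+\Lambda_3)^{\frac{n}{n+1}},
\]
where $\psi$ is the comparison potential from Section 3 (built out of $\Phi_1$) and $\psi_3$ is a further auxiliary potential. At the maximum of $\Psi$ one computes $\Delta_\varphi\Psi$. The term $\tfrac{B^{1/n}}{n}\Delta_\varphi\psi_2$ contributes, via the AM--GM inequality $\tr_\varphi\omega_{\psi_2}\ge n(\omega_{\psi_2}^n/\omega_\varphi^n)^{1/n}$, precisely $|R_\varphi|$, which annihilates the scalar-curvature term coming from $\Delta_\varphi F = -R_\varphi + \tr_\varphi\Ric(\omega)$. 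The Ricci term $\tr_\varphi\Ric(\omega)$ is absorbed by the contribution $\lambda\,\tr_\varphi\omega$ from $-\lambda\varphi$, with $\lambda$ chosen larger than $|\Ric(\omega)|_\omega$. The upshot is a \emph{pointwise} bound $F\le \lambda\varphi - \tfrac{B^{1/n}}{n}\psi_2 - b\psi + C$, and since $e^{-\psi_2}$ and $e^{-\psi}$ lie in every $L^p$ by Theorem \ref{mosertrudingertype}, one obtains $\|e^F\|_{L^p}\le C$ for every fixed $p>1$, hence the $L^\infty$ bound on $\varphi$.

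Your integration-by-parts scheme faces two concrete obstacles that the pointwise argument sidesteps. First, after multiplying $-\Delta_\varphi F = R_\varphi - \tr_{\omega_\varphi}\Ric(\omega)$ by a convex test and integrating, the term $\int \tr_{\omega_\varphi}\Ric(\omega)\cdot(\text{test})\,\omega_\varphi^n$ involves $\tr_{\omega_\varphi}\omega$, for which you have no a priori control; in the maximum-principle approach this is handled locally at a single point by the $-\lambda\varphi$ term, which is unavailable in an integral identity. Second, even granting control of the scalar-curvature integral via H\"older and your auxiliary $\psi$, it is not clear how an inequality of the form $\int \Phi_1(F)e^F\le K_1$ plus such an integral bound manufactures a \emph{stronger} $\tilde\Phi$ with $\int^\infty\tilde\Phi^{-1/n}<\infty$; no mechanism for this upgrade is specified. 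The paper's route avoids the issue entirely by producing a pointwise inequality for $F$ rather than an integral one.
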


\begin{proof}The cscK equation can be written as \[\begin{aligned}
&(\omega+\ddc\varphi)^n=e^F\omega^n,\\
&\Delta_\varphi F=-R_\varphi+tr_{\varphi}Ric.
\end{aligned}\]
In the following we normalize the metric $\omega$ with $\int_M\omega^n=1$, and assume $\sup \varphi=0$.
Consider the auxiliary equations \[
 (\omega+\ddc\psi_1)^n=\frac{\Phi_1(F)e^F\omega^n}{\cN_{\Phi_1}(F)},\quad \sup\psi_1=0,
\]
and \[
    (\omega+\ddc\psi_2)^n=\frac{|R_\varphi|^ne^F\omega^n}{B}, \quad \sup\psi_2=0.
\]
Let $\psi:=-h(-\frac{\alpha}{q}\psi_1+\Lambda_1)$, and $f:=\exp\{-\frac{\alpha}{q}\psi_1+\Lambda_1\}$, with $h$ and $\Lambda_1$ are defined as in (\ref{definitionofh}) such that $\omega_\varphi^n\leq a^n\omega_\psi^n+f\omega^n.$ Define $ 
    \Psi:=F-\lambda\varphi+\frac{B^{\frac{1}{n}}}{n}\psi_2+b\psi-\epsilon(-\psi_{3,j}+\Lambda_3)^{\frac{n}{n+1}}$,
where $\psi_{3,j}$ is the solution to \[
    (\omega+\ddc\psi_{3,j})^n=\frac{\eta_j(F-\lambda\varphi+\frac{B^{\frac{1}{n}}}{n}\psi_2+b\psi)f\omega^n}{A_j},\quad \sup\psi_{3,j}=0,
\]
and $A_j:=\int_M\eta_j(F-\lambda\varphi+\frac{B^{\frac{1}{n}}}{n}\psi_2+b\psi)f\omega^n$.

Then at the point $x_0$ where the maximum of $\Psi$ is attained(without loss of generality we can assume $F-\lambda\varphi+\frac{B^{\frac{1}{n}}}{n}\psi_2+b\psi>0$ at $x_0$), we have \[
    \begin{aligned}
    0\geq &\Delta_\varphi v\\
    \geq & -R_\varphi+(\lambda-|Ric_\omega|-b-\epsilon\frac{n}{n+1}(-\psi_{3,j}+\Lambda_3)^{-\frac{1}{n+1}})tr_\varphi g-\lambda n+B^{\frac{1}{n}}\left(\frac{\omega^n_{\psi_2}}{\omega_\varphi^n}\right)^{\frac{1}{n}}+nb\left(\frac{\omega^n_{\psi}}{\omega_\varphi^n}\right)^{\frac{1}{n}}\\
    &+\epsilon\frac{n}{n+1}(-\psi_{3,j}+\Lambda_3)^{-\frac{1}{n+1}}\left(\frac{\omega^n_{\psi_{3,j}}}{\omega_\varphi^n}\right)^{\frac{1}{n}}\\
    \geq &-\lambda n+bn\left(\frac{\omega^n_{\psi}}{\omega_\varphi^n}\right)^{\frac{1}{n}}+\epsilon\frac{n}{n+1}(-\psi_{3,j}+\Lambda_3)^{-\frac{1}{n+1}}\left(\frac{\omega^n_{\psi_{3,j}}}{\omega_\varphi^n}\right)^{\frac{1}{n}}.
    \end{aligned}
\]
So we get \[
    \omega_\varphi^n\geq \lambda^{-n}b^n\omega_{\psi}^n+\lambda^{-n}\left(\frac{\epsilon}{n+1}\right)^{n}(-\psi_{3,j}+\Lambda_3)^{-\frac{n}{n+1}}\frac{\eta_j(F-\lambda\varphi+\frac{B^{\frac{1}{n}}}{n}\psi_2+\psi)f\omega^n}{A_j}
\]
Now we specify the constant $a=\lambda^{-1}b$ in defining $h$, and the above inequality implies \[
    \eta_j(F-\lambda\varphi+\frac{B^{\frac{1}{n}}}{n}\psi_2+\psi)\leq \lambda^n\left(\frac{n+1}{\epsilon}\right)^{n}A_j(-\psi_{3,j}+\Lambda_3)^{\frac{n}{n+1}}
\]
Choose\[
    \lambda=|Ric_\omega|_{\omega}+2, \epsilon=(\lambda(n+1))^{\frac{n}{n+1}}A_j^{\frac{1}{n+1}},\Lambda_3=\lambda^{n}\frac{n^{n+1}}{n+1}A_j.\]
Therefore $\Psi\leq 0$.

By theorem \ref{mosertrudingertype}, we have $||\exp\{-\psi_2\}||_{L^p(\omega^n)}$ is uniformly bounded for any fixed $p>0$ due to the bound on $\int_M\Phi_2(\log(|R_\varphi|^ne^F))|R_\varphi|^ne^F\omega^n$. With the same reason, $||e^{-b\psi}||_{L^p(\omega^n)}$ is also uniformly bounded for any fixed $p>1$. From $\Psi\leq 0$, we get  for any $p>1$ one can bound the $L^p(\omega^n)$-norm of $e^F$ by a uniform constant depending on the parameters as in the theorem. So back to the first equation in the cscK equation, we get uniform estimate for the potential $\varphi$.
\end{proof}

\begin{remark}  In the fundamental work of Chen-Cheng \cite{CC1}, the uniform estimate was proved by assuming an $L^\infty$ bound of the scalar curvature $R_\varphi$. Later, Lu and Seyyedali in \cite{lu2023remarks} proved such estimate with the a bound of $\int_M|R_\varphi|^pe^F\omega^n$ for some $p>n$. Our corollary says that the uniform estimate holds under a bound on $\int_M\Phi_2(|R_\varphi|^ne^F)|R_\varphi|^ne^F\omega^n$.  We remark that under the additional assumption on the uniform Sobolev constant and $\int_M|R_\varphi|^pe^F\omega^n$ for some $p>n$, Li and Zhang \cite{li2023calabiflowboundedlp} proved higher order estimates for the cscK equations.
\end{remark}

\section{Modulus of continuity of solutions}
The above method can also be used to prove the following theorem on the modulus of continuity of solutions to complex Monge-Amp\`ere equations, which has the same strength as proved by pluripotential theory (see \cite{GGZ}):
\begin{theorem}Let $(M,\omega)$ be a compact K\"ahler manifold with a fixed K\"ahler metric $\omega$. Fix $p>n$, let $u$ be a smooth solution to the equation \[(\omega+\ddc u)^n=e^F\omega^n, \quad \omega+\ddc u>0.
\]There exist constants $C_1=C_1(n,p,\omega,||e^F||_{L^1(\log L)^p}),C_2=C_2(n,p,\omega,||e^F||_{L^1(\log L)^n(\log\log L)^p})$ such that \begin{equation}
|u(x)-u(y)|\leq \frac{C_1}{|\log d_{\omega}(x,y)|^{\frac{p}{n}-1}},
\end{equation}
and \begin{equation}
|u(x)-u(y)|\leq \frac{C_2}{\left(\log(-\log d_{\omega}(x,y))\right)^{\frac{p}{n}-1}}.
\end{equation}
\end{theorem}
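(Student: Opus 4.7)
The strategy is to reduce the modulus of continuity of $u$ to an $L^\infty$-bound on $u_\delta - u$, where $u_\delta$ is a sup-convolution regularization of $u$ at scale $\delta > 0$, and then to invoke the relative $L^\infty$-estimate of Proposition~\ref{babyrelativeestimate} together with the auxiliary construction of Section~3 to extract the logarithmic rate in $\delta$.

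First, for small $\delta > 0$ define the Kiselman-type sup-convolution $u_\delta(x) := \sup\{u(y) : d_\omega(x,y) \leq \delta\}$. Using the exponential map and the bisectional curvature of $\omega$, one checks that $u_\delta$ is $(1+A\delta^2)\omega$-plurisubharmonic for some $A = A(\omega)$, and the modulus of continuity of $u$ at scale $\delta$ is controlled by $\|u_\delta - u\|_{L^\infty}$ up to an $O(\delta^2)$ error. The symmetric estimate $|u(x) - u(y)|$ in the opposite direction follows by repeating the argument with an inf-convolution. Set $\hat u_\delta := (1+A\delta^2)^{-1}(u_\delta - \sup u_\delta) \in \PSH(M,\omega)$; it then suffices to bound $\|\hat u_\delta - u\|_{L^\infty}$.

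Next, a pointwise determinantal inequality for positive Hermitian $(1,1)$-forms yields, for any parameter $a \in (0,1)$,
\[
    e^{F}\omega^{n} = (\omega + \ddc u)^{n} \leq a^{n}(\omega + \ddc \hat u_\delta)^{n} + f_{a,\delta}\,\omega^{n},
\]
where $f_{a,\delta}$ is essentially $e^{F}$ truncated on the set where $e^{F}$ is larger than $a^{n}(\omega+\ddc\hat u_\delta)^{n}/\omega^{n}$. Applying Proposition~\ref{babyrelativeestimate} gives $u \geq a\hat u_\delta - C(\|f_{a,\delta}\|_{L^{q}}, (1-a)^{-(n+1)})$, which rearranges to $u_\delta - u \leq (1-a)\|u\|_{L^\infty} + C/a + O(\delta^{2})$. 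The $L^{1}(\log L)^{p}$ hypothesis on $e^{F}$ enters through the construction of Section~3: one builds an auxiliary $\omega$-psh function $\psi_1$ (with $\omega_{\psi_1}^{n}$ proportional to $\Phi(F)e^{F}\omega^{n}$) whose exponential $\exp\{-\alpha\psi_1/q + \Lambda_1\}$ lies uniformly in $L^{q}$ for every fixed $q > 1$, controlling $\|f_{a,\delta}\|_{L^{q}}$ by a quantity of the form $C(1-a)^{-N}$ depending only on $p, q, n, \|e^{F}\|_{L^{1}(\log L)^{p}}$. Optimizing the free parameter $a = a(\delta)$ so that $(1-a) \sim |\log \delta|^{-(p/n-1)}$ matches the three scales and produces the claimed bound $C_{1}/|\log \delta|^{p/n-1}$. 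The $\log\log$-version is obtained by running the same construction with $\Phi(x) = |x|^{n}(\log|x|)^{p}$ in the Moser--Trudinger framework of Theorem~\ref{mosertrudingertype}(3).

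The central difficulty is matching the three scales -- the regularization scale $\delta$, the parameter $a \in (0,1)$, and the $L^{q}$-norm of the comparison function $f_{a,\delta}$ -- with sufficient precision to reach the sharp exponent $p/n - 1$ rather than the weaker $\min\{p/n-1, p/(n+1)\}$ of \cite{guo2021modulus}. The improvement here is precisely the flexibility afforded by Section~3: because $q > 1$ may be taken arbitrarily large while $\|f_{a,\delta}\|_{L^{q}}$ grows only polynomially in $(1-a)^{-1}$, the $(1-a)^{-(n+1)}$ factor in Proposition~\ref{babyrelativeestimate} can be absorbed into at most a logarithmic loss. A secondary technical point is checking that the $(1+A\delta^{2})$-psh correction on the sup-convolution does not spoil the pointwise Monge--Ampère comparison, which is a routine perturbation argument.
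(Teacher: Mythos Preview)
Your proposal has a genuine gap: the $\delta$-dependence never enters the estimate in a way that produces a rate going to zero.  You write $e^{F}\omega^{n}\le a^{n}\omega_{\hat u_\delta}^{n}+f_{a,\delta}\,\omega^{n}$ with $\hat u_\delta$ the sup-convolution, but there is no control whatsoever on $\omega_{\hat u_\delta}^{n}$, so $f_{a,\delta}$ is essentially $e^{F}$ itself and its $L^{q}$-norm does not depend on $\delta$.  Even granting a bound $\|f_{a,\delta}\|_{L^{q}}\le C(1-a)^{-N}$, Proposition~\ref{babyrelativeestimate} would only give $\hat u_\delta-u\le (1-a)\|u\|_{L^{\infty}}+C\big((1-a)^{-(n+1)},(1-a)^{-N}\big)$; optimizing in $a$ yields a fixed constant, not a quantity tending to zero as $\delta\to0$.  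The sentence ``optimizing $a=a(\delta)$ so that $(1-a)\sim|\log\delta|^{-(p/n-1)}$'' has no force because the right-hand side contains no $\delta$.

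What is missing is precisely the mechanism the paper uses.  The comparison is set up with \emph{three} potentials, namely $u$, the Demailly regularization $u_{c,\delta}$, and the entropy-built potential $\psi$ of Lemma~\ref{choiceofe1lambda1}, in the form $-u+(1-2r)u_{c,\delta}+r\psi-s$.  The Monge--Amp\`ere comparison is between $\omega_u^{n}$ and $r^{n}\omega_\psi^{n}+f\omega^{n}$ (not $\omega_{u_{c,\delta}}^{n}$), so $\|f\|_{L^{q}}$ is controlled by the Section~3 construction.  The $\delta$-dependence enters through the Berman--Demailly approximation (Proposition~\ref{l1approximation}(3)): $\|(u_{c,\delta}-u)_{+}\|_{L^{1}(\omega^{n})}\le C\delta^{2}$, which bounds the initial level $\phi(s_{0})$ in the De~Giorgi iteration (Lemma~\ref{controlofphi(s0)byapproximationofu}).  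One then chooses $\Lambda_{1}=-\tfrac{1}{B}\log\delta$, so that $s_{0}\sim\Lambda_{1}^{1-p/n}\sim|\log\delta|^{1-p/n}$ while $e^{C\Lambda_{1}}\phi(s_{0})^{\delta_{0}}$ remains bounded, and the iteration yields $s_{\infty}\le C|\log\delta|^{1-p/n}$.  A further argument with the parameter $c$ in $U_{c,\delta}$ converts this into $\rho_{\delta}u-u\le C|\log\delta|^{1-p/n}$.  Your sup-convolution/rescaling setup bypasses the $L^{1}$-closeness entirely, and without it there is no rate.
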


This theorem generalizes Guo-Phong-Tong-Wang's results that we can choose the power $\alpha$ in the modulus of continuity $m(r)=\frac{C}{|\log r|^\alpha}$ to be $\alpha=\frac{p}{n}-1$ rather than $\alpha=\min(\frac{p}{n}-1,\frac{p}{n+1})$. And we'll see how the auxiliary function helps us and the choice of constant $h,\Lambda_1$ determines the choice of modulus of continuity. 

To show this, we need some facts about Demailly's approximation of psh functions.
Let $\rho:\mathbb{C}^n\rightarrow \mathbb{R}$ be a smooth kernel such that $\int_{\mathbb{C}^n}\rho(|w|)dV(w)=1$ and $\rho$ has compact support in the unit ball. One define \[
    \rho_\delta u(x):=\frac{1}{\delta^{2n}}\int_{T_xM}u(\exp_x(\zeta))\rho(\frac{|\zeta|_{\omega}^2}{\delta^2})dV_{\omega}(\zeta)
\]
We list some properties of $\rho_\delta u$ we need in the proof as follows:
\begin{proposition}[Berman-Demailly, lemma 1.2 in \cite{MR2884031}, see also \cite{GGZ}]\label{l1approximation}Let $u\in psh(M,\omega)$ with normalization $1\leq u\leq C$. We have \begin{enumerate}
\item There exists a constant $K>0$ depends on the curvature of $\omega$, such that $\rho_tu+Kt$ is monotone increasing in $t$. 
\item For any $c>0$, define $U_{c,\delta}=\inf_{t\in(0,\delta)}(\rho_tu(x)+Kt-c\log(\frac{t}{\delta})-K\delta)$, then \[
    \omega+\ddc U_{c,\delta}\geq -(Ac+K\delta)\omega,
\]where $-A$ is the lower bound of the bidectional curvature of $\omega$.
\item $\int_M|\rho_\delta u-u|\omega^n\leq C(n,\omega)\delta^2, \quad \forall \delta\in(0,1].$
\end{enumerate}
\end{proposition}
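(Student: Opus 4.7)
The plan is to prove the three claims in order, following the approach of Berman--Demailly.

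For claim (1), I would compute $\partial_t \rho_t u(x)$ directly. In $\omega$-normal coordinates at $x$, the change of variables $\zeta = t\eta$ gives $\rho_t u(x) = \int u(\exp_x(t\eta))\rho(|\eta|_\omega^2)\,dV_\omega(\eta)$. Differentiating in $t$ and integrating by parts on the tangent space converts the radial derivative into a Laplacian-type term plus a curvature correction measuring the deviation of $\exp_x$ from a linear isometry. The Laplacian term is controlled from below by $\Delta_\omega u \geq -n$, a distributional consequence of $\omega + \ddc u \geq 0$, while the curvature correction is uniformly bounded by a constant depending on the Riemann tensor of $\omega$ and the support of $\rho$. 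Choosing $K$ to dominate both contributions yields $\partial_t \rho_t u \geq -K$, hence $\rho_t u + Kt$ is monotone increasing in $t$.

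For claim (2), the key intermediate step is a Hessian bound on each regularization: $\omega + \ddc_x \rho_t u \geq -Bt\,\omega$ for some curvature-dependent constant $B$. This is obtained by passing two $\partial/\bar\partial$ derivatives through the integral defining $\rho_t u$ and combining the $\omega$-pshness of $u$ with a Taylor expansion of $\exp_x$. The logarithmic penalty $-c\log(t/\delta)$ plays no role in $\ddc_x$, but enforces two structural features: it forces the infimum in $t$ to be attained at an interior point $t^*(x) \in (0,\delta)$ (since it blows up as $t \to 0$), and it contributes the concavity in $\log t$ needed for an envelope-type argument. I would then apply Kiselman's minimum principle to the function $F(t,x) = \rho_t u(x) + Kt - c\log(t/\delta) - K\delta$, transferring the quasi-pshness of the family to the envelope $U_{c,\delta}$; the extra $Ac$ appears as the curvature contribution propagated through the logarithmic weight. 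After coordinating constants, this produces the claimed bound $\omega + \ddc U_{c,\delta} \geq -(Ac + K\delta)\omega$.

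For claim (3), I would use a Taylor expansion of $u \circ \exp_x$ around $x$: after smoothing $u$ by standard $\omega$-psh regularizations if necessary, one writes $u(\exp_x \zeta) - u(x) = L(\zeta) + Q(\zeta) + O(|\zeta|^3)$ with $L$ linear. The rotational symmetry of $\rho$ annihilates $L$ upon integration, while $Q$ contributes a term proportional to $\delta^2\,\Delta_\omega u(x)$ modulo curvature-controlled lower-order pieces. Thus pointwise $|\rho_\delta u - u|(x) \leq C\delta^2(|\Delta_\omega u(x)| + 1)$. Integrating over $M$ and using the global bound $\int_M |\Delta_\omega u|\,\omega^n \leq C(n,\omega)$, which follows from $\Delta_\omega u + n \geq 0$ together with $\int_M \Delta_\omega u\,\omega^n = 0$ on a compact K\"ahler manifold, gives the desired $O(\delta^2)$ estimate. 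For merely quasi-psh $u$ the argument is concluded by taking a monotone decreasing sequence of smooth $\omega$-psh approximations and using the uniform $L^1$-bound on $\Delta u + n$.

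The main obstacle I expect is claim (2): passing the quasi-psh bound from the family $F(t,\cdot)$ to the envelope $U_{c,\delta}$ requires verifying the joint convexity/plurisubharmonicity hypotheses of Kiselman's principle, and the constants $K$, $A$, and $c$ must be coordinated carefully so that the final bound $-(Ac + K\delta)\omega$ emerges cleanly. Claims (1) and (3) are more straightforward but still require careful tracking of curvature contributions.
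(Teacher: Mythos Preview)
The paper does not give its own proof of this proposition; it is quoted directly from Berman--Demailly (with a pointer to \cite{GGZ}) and used as a black box in Section~4. Your outline follows the standard Berman--Demailly argument (monotonicity via subharmonicity plus a curvature correction, Kiselman's minimum principle for the envelope, and a second-order Taylor/Fubini argument for the $L^1$ bound), so it is aligned with the cited source rather than with anything in the present paper. One small caution on (3): the pointwise inequality $|\rho_\delta u - u|(x)\le C\delta^2(|\Delta_\omega u(x)|+1)$ is not available for merely quasi-psh $u$; the clean route is to combine the one-sided bound $\rho_\delta u - u \ge -K\delta$ from (1) with a Fubini computation of $\int_M(\rho_\delta u - u)\,\omega^n$ via the Jacobian expansion of $\exp_x$, which produces the $O(\delta^2)$ directly without appealing to pointwise Laplacian control.
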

Now we start to prove the theorem.
\begin{proof}
We choose a small $c>0$ but large compared with $\delta$. Then $\omega+\ddc U_{c,\delta}\geq -A'c\omega$. So we can define a $\omega$-psh function $u_{c,\delta}:=(1-A'c)U_{c,\delta}$. So we have $\omega+\ddc u_{c,\delta}\geq (A'c)^2\omega\geq 0.$ And from the normalization of $u$ we have $u_{c,\delta}>0$. 

To use the relative $L^\infty$-estimate, let's state the comparison functions in the previous section as a lemma with an address on the constants. For notation-simplicity, we normalize the volume $\int_M\omega^n=1$.
\begin{lemma}\label{choiceofe1lambda1}Let $\psi_1$ be the solution to the equation\[
    (\omega+\ddc\psi_1)^n=\frac{\Phi(F)e^F\omega^n}{\cN_\Phi(F)},\quad \sup\psi_1=0.
\]\begin{enumerate}
\item If $\Phi(x)=|x|^p$ with $p>n$, we choose \[
h(s)=-\frac{n}{p-n}a^{-1}\alpha^{-1}q\cN_\Phi(F)^{\frac{1}{n}}s^{1-\frac{p}{n}},\quad \Lambda_1=\left(a^{-1}\cN_\Phi(F)^{\frac{1}{n}}\right)^{\frac{n}{p}}.\]
And $\psi=-h(-\frac{\alpha}{q}\psi_1+\Lambda_1)$.
Then \[
    e^F\omega^n\leq a^n\omega_\psi^n+f\omega^n,
\]where $f=\exp\{-\frac{\alpha}{q}\psi_1+\Lambda_1\}$. 
\item If $\Phi(x)=|x|^n(\log|x|)^p, p>n.$ We choose 
\[h(s)=-\frac{n}{p-n}a^{-1}\alpha^{-1}q\cN_\Phi(F)^{\frac{1}{n}}(\log s)^{1-\frac{p}{n}}, \quad \Lambda_1(\log\Lambda_1)^{\frac{p}{n}}=a^{-1}\cN_\Phi(F)^{\frac{1}{n}}.
\]
Let $\psi=-h(-\frac{\alpha}{q}\psi_1+\Lambda_1)$. Then \[
    e^F\omega^n\leq a^n\omega_\psi^n+f\omega^n,
\]where $f=\exp\{-\frac{\alpha}{q}\psi_1+\Lambda_1\}$. 
\end{enumerate}
 \end{lemma}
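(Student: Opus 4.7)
The plan is to specialize the construction from the proof of Theorem \ref{mosertrudingertype} directly. Set $g := -\tfrac{\alpha}{q}\psi_{1}+\Lambda_{1}$ and define the comparison function $\psi := -h(g)$, where $h$ is characterised by the antiderivative relation
\[
h'(s) \;=\; a^{-1}\alpha^{-1}q\,\cN_{\Phi}(F)^{1/n}\,\Phi(s)^{-1/n},
\]
equivalently $\Phi(s) = a^{-n}\alpha^{-n}q^{n}\,\cN_{\Phi}(F)\,h'(s)^{-n}$. This one identity is the entire workhorse: it automatically makes $h$ increasing and concave on the range where $\Phi$ is increasing, and it encodes the dichotomy below.

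First I would reproduce the pointwise computation from Section 3. Concavity of $h$ gives
\[
\ddc\psi \;=\; \tfrac{\alpha}{q}h'(g)\,\ddc\psi_{1} \;+\; \tfrac{\alpha^{2}}{q^{2}}(-h''(g))\,\sqrt{-1}\partial\psi_{1}\wedge\bar\partial\psi_{1} \;\geq\; \tfrac{\alpha}{q}h'(g)\,\ddc\psi_{1},
\]
so that provided $\tfrac{\alpha}{q}h'(g)\le 1$ one obtains $\omega+\ddc\psi \ge \tfrac{\alpha}{q}h'(g)\,\omega_{\psi_{1}}$, and taking $n$-th powers together with the auxiliary equation for $\psi_{1}$ yields
\[
\omega_{\psi}^{n} \;\geq\; \bigl(\tfrac{\alpha}{q}h'(g)\bigr)^{n}\frac{\Phi(F)\,e^{F}}{\cN_{\Phi}(F)}\,\omega^{n}.
\]
Splitting $M$ according to whether $a^{n}\alpha^{n}q^{-n}h'(g)^{n}\Phi(F)/\cN_{\Phi}(F)\geq 1$ or $<1$: on the first region the displayed inequality immediately gives $e^{F}\omega^{n}\le a^{n}\omega_{\psi}^{n}$; on the complementary region, the defining identity of $h$ rewrites the strict inequality as $\Phi(F)<\Phi(g)$, so by monotonicity $F<g$ and therefore $e^{F}\le e^{g}=f$. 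Combining the two regions yields the claimed pointwise bound $e^{F}\omega^{n}\le a^{n}\omega_{\psi}^{n}+f\omega^{n}$.

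The only case-specific work is to evaluate the antiderivative $h$ explicitly and to calibrate $\Lambda_{1}$ so that the hypothesis $\tfrac{\alpha}{q}h'(\Lambda_{1})\le 1$, equivalently $\Phi(\Lambda_{1})\ge a^{-n}\cN_{\Phi}(F)$, holds---it then propagates to all $g\ge \Lambda_{1}$ since $h'$ is nonincreasing and $\psi_{1}\le 0$ forces $g\ge \Lambda_{1}$. In case (1) with $\Phi(t)=t^{p}$, $p>n$, the elementary integration $\int t^{-p/n}\,dt=-\tfrac{n}{p-n}t^{1-p/n}$ produces the stated formula $h(s)=-\tfrac{n}{p-n}a^{-1}\alpha^{-1}q\,\cN_{\Phi}(F)^{1/n}s^{1-p/n}$ and the threshold $\Lambda_{1}^{p}\ge a^{-n}\cN_{\Phi}(F)$, i.e.\ the equality case $\Lambda_{1}=(a^{-1}\cN_{\Phi}(F)^{1/n})^{n/p}$. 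In case (2) with $\Phi(t)=t^{n}(\log t)^{p}$, $p>n$, the substitution $u=\log t$ turns $\int t^{-1}(\log t)^{-p/n}\,dt$ into $-\tfrac{n}{p-n}(\log t)^{1-p/n}$, yielding the stated $h$ and the implicit equation $\Lambda_{1}(\log\Lambda_{1})^{p/n}=a^{-1}\cN_{\Phi}(F)^{1/n}$ for the threshold. I do not anticipate a genuine obstacle; this lemma is bookkeeping inside the machinery already set up in Section 3, and the downstream $L^{q}(\omega^{n})$-control of $f=\exp\{-\tfrac{\alpha}{q}\psi_{1}+\Lambda_{1}\}$ needed by Proposition \ref{babyrelativeestimate} is immediate from Tian's $\alpha$-invariant applied to $\psi_{1}$---precisely what the rescaling by $\alpha/q$ in the definition of $g$ was tuned for.
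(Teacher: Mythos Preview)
Your proposal is correct and follows exactly the same approach as the paper: the lemma is explicitly introduced there as a restatement of the comparison construction from the proof of Theorem \ref{mosertrudingertype}, and you have reproduced that argument (concavity of $h$, the dichotomy on $a^{n}\alpha^{n}q^{-n}h'(g)^{n}\Phi(F)\cN_\Phi(F)^{-1}$, and the calibration of $\Lambda_1$ via $\Phi(\Lambda_1)\ge a^{-n}\cN_\Phi(F)$) together with the elementary antiderivative computations for the two specific choices of $\Phi$.
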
 
 In the following, we'll use the notations:\[
    \begin{aligned}
    &\Omega_s=\{-u+(1-2r)u_{c,\delta}+r\psi-s>0\}\\
    &A_{s,j}=\int_{\Omega_s}\eta_j(-u+(1-2r)u_{c,\delta}+r\psi-s)f\omega^n\\
    &\phi(s)=\int_{\Omega_s}f\omega^n
    \end{aligned}
\]
 Now we consider the function \[
 \Psi_j(x):=(-u+(1-2r)u_{c,\delta}+r\psi-s)-\epsilon_{2,j}(-\psi_{2,j}+\Lambda_{2,j})^{\frac{n}{n+1}},
 \]
where $\psi_{2,j}$ is the solution to the equation \[
 (\omega+\ddc\psi_{2,j})^n=\frac{\eta_j(-u+(1-2r)u_{c,\delta}+r\psi-s)f\omega^n}{A_{s,j}};\quad \sup\psi_{2,j}=0,
\]
and $\psi$ is constructed by lemma \ref{choiceofe1lambda1} with a choice of $a=r$.
We also choose $\epsilon_{2,j},\Lambda_{2,j}$ as 
\[
    \epsilon_{2,j}=\left(\frac{n+1}{n}\right)^{\frac{n}{n+1}}A_{s,j}^{\frac{1}{n+1}},\quad \Lambda_{2,j}=\frac{n}{n+1}\frac{1}{r^{n+1}}A_{s,j}.
\]

We claim that $\Psi_{j}\leq 0$. Assume $\Psi_{j}$ attains its maximum at $x_0$. We can assume $x_0\in \Omega_{s}$, otherwise we already have $\Psi_{j}\leq 0$. Then at $x_0$, we have (for convenience we suppress the subscript $j$)\[
    \begin{aligned}
    0\geq &-\ddc u+(1-2r)\ddc u_{c,\delta}+r\ddc\psi+\epsilon_{2}\frac{n}{n+1}(-\psi_2+\Lambda_2)^{-\frac{1}{n+1}}\ddc\psi_2\\
    &+\epsilon_2\frac{n}{(n+1)^2}(-\psi_2+\Lambda_2)\sqrt{-1}\partial\psi_2\wedge\bar{\partial}\psi_2\\
    \geq&-\omega_{u}+(1-2r)\omega_{u_{c,\delta}}+r\omega_{\psi}+\epsilon_2\frac{n}{n+1}(-\psi_2+\Lambda_2)^{-\frac{1}{n+1}}\omega_{\psi_2}
\end{aligned}
\]
In the last equality we used $\epsilon_{2,j}\frac{n}{n+1}(\Lambda_{2,j})^{-\frac{1}{n+1}}=r$, hence $r\omega\geq \epsilon_{2,j}\frac{n}{n+1}(-\psi_{2,j}+\Lambda_{2,j})^{-\frac{1}{n+1}}\omega$.

Now we apply the comparison lemma \ref{choiceofe1lambda1}, \[
\begin{aligned}
  r^n\omega_\psi^n+f\omega^n\geq \omega_u^n\geq &\left((1-2r)\omega_{u_{c,\delta}}+r\omega_{\psi}+\epsilon_{2,j}\frac{n}{n+1}(-\psi_{2,j}+\Lambda_{2,j})^{-\frac{1}{n+1}}\omega_{\psi_{2,j}}\right)^n\\
  \geq &r^n\omega_\psi^n+\left(\epsilon_{2,j}\frac{n}{n+1}\right)^n(-\psi_{2,j}+\Lambda_{2,j})^{-\frac{1}{n+1}}\frac{\eta_j(-u+(1-2r)u_{c,\delta}+r\psi-s)}{A_{s,j}}f\omega^n
\end{aligned}
\]
It's equivalent to $\Psi_j\leq 0$ with our choice of $\epsilon_{2,j}, \Lambda_{2,j}$.

Next, we want to choose the constant $\Lambda_1$, $c,r$ to get a good estimate. 

\begin{lemma} \label{controlofphi(s0)byapproximationofu}\begin{enumerate}
\item If $\Phi(x)=|x|^p,p>n$, for any $q>1, q^*=\frac{q}{q-1}$, choose $s_0$ as \[
    s_0=2\frac{n}{p-n}\alpha^{-1}q\cN_\Phi(F)^{\frac{1}{n}}(\Lambda_1)^{1-\frac{p}{n}}\]
Then \[
    \phi(s_0)\leq C(M,\omega,q,p,n,\cN_\Phi(F))e^{\Lambda_1}s_0^{-\frac{1}{q^*}}||(u_{c,\delta}-u)_+||_{L^1(\omega^n)}^{\frac{1}{q^*}}
\]
\item If $\Phi(x)=|x|^n(\log x)^{p}, p>n$, for any $q>1, q^*=\frac{q}{q-1}$Choose $s_0$ as \[
 s_0=2\frac{n}{p-n}{\alpha^{-1}q\cN_\Phi(F)^{1/n}}(\log\Lambda_1)^{1-p/n}\]
Then \[
    \phi(s_0)\leq C(M,\omega,q,p,n,\cN_\Phi(F))e^{\Lambda_1}s_0^{-\frac{1}{q^*}}||(u_{c,\delta}-u)_+||_{L^1(\omega^n)}^{\frac{1}{q^*}}\]
\end{enumerate}
\end{lemma}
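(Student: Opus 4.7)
The plan is to bound $\phi(s_0)=\int_{\Omega_{s_0}}f\,\omega^n$ by combining H\"older's inequality, which decouples $f$ from $|\Omega_{s_0}|_\omega$, with Chebyshev's inequality applied to $u_{c,\delta}-u$. The two specific values of $s_0$ given in the statement are engineered so that Chebyshev becomes effective: each satisfies $-rh(\Lambda_1)=s_0/2$, which is enough to force $u_{c,\delta}-u>s_0/2$ throughout $\Omega_{s_0}$.

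First I would rewrite the defining inequality of $\Omega_{s_0}$ as
\[
u_{c,\delta}-u>s_0+2ru_{c,\delta}-r\psi.
\]
Since $\psi_1\le 0$ and $h$ is increasing, $\psi=-h(-\tfrac{\alpha}{q}\psi_1+\Lambda_1)\le -h(\Lambda_1)$, so $-r\psi\ge rh(\Lambda_1)=-s_0/2$; combined with $u_{c,\delta}>0$ (guaranteed by Proposition \ref{l1approximation} and the normalization $1\le u\le C$), this yields $u_{c,\delta}-u>s_0/2$ on $\Omega_{s_0}$. Chebyshev's inequality then gives
\[
|\Omega_{s_0}|_\omega\le \frac{2}{s_0}\|(u_{c,\delta}-u)_+\|_{L^1(\omega^n)},
\]
and H\"older with conjugate exponents $q$ and $q^*=q/(q-1)$ produces $\phi(s_0)\le\|f\|_{L^q(\omega^n)}|\Omega_{s_0}|_\omega^{1/q^*}$. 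Since $f^q=e^{q\Lambda_1}e^{-\alpha\psi_1}$, Tian's $\alpha$-invariant for $(M,\omega)$ bounds $\int_M e^{-\alpha\psi_1}\omega^n$ uniformly, so $\|f\|_{L^q(\omega^n)}\le C(M,\omega)\,e^{\Lambda_1}$. Assembling the three estimates yields the claimed inequality.

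The one step requiring genuine case-by-case attention is the identity $-rh(\Lambda_1)=s_0/2$, which is where the asymptotic profile of $\Phi$ is encoded in $s_0$ and where the two cases bifurcate. For $\Phi(x)=|x|^p$ one has $h(\Lambda_1)=-\tfrac{n}{p-n}r^{-1}\alpha^{-1}q\,\cN_\Phi(F)^{1/n}\Lambda_1^{1-p/n}$, so the choice of $s_0$ in case (1) makes the identity automatic; case (2) is identical after replacing $\Lambda_1^{1-p/n}$ by $(\log\Lambda_1)^{1-p/n}$. This bookkeeping sets the scaling $s_0\sim \Lambda_1^{1-p/n}$ (respectively $(\log\Lambda_1)^{1-p/n}$), which ultimately dictates the $|\log d|^{-(p/n-1)}$ versus $(\log|\log d|)^{-(p/n-1)}$ modulus of continuity stated in the main theorem.
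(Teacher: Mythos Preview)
Your proof is correct and follows essentially the same route as the paper: both use that $0<r\psi\le -rh(\Lambda_1)=s_0/2$ to force $u_{c,\delta}-u>s_0/2$ on $\Omega_{s_0}$, then combine H\"older (with exponents $q,q^*$) with the bound $\|f\|_{L^q}\le Ce^{\Lambda_1}$ coming from the $\alpha$-invariant. The only cosmetic difference is that the paper inserts the factor $\big((u_{c,\delta}-u)_+/(s_0/2)\big)^{1/q^*}\ge 1$ directly into $\int_{\Omega_{s_0}}f\,\omega^n$ before applying H\"older, whereas you first apply Chebyshev to bound $|\Omega_{s_0}|_\omega$ and then H\"older; the resulting estimate is identical.
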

\begin{proof}
The proofs for different $\Phi(x)$ are similar, we prove the the case $\Phi(x)=|x|^n(\log|x|)^p$ for example. 

From the definition of $\Omega_{s_0}$ and the fact $u_{c,\delta}>0$ because of the normalization of $u$, we have $$-u+u_{c,\delta}+r\psi>s_0.$$ By lemma \ref{choiceofe1lambda1} with $a=r$, we have $$0<r\psi\leq =\frac{p}{p-n}\alpha^{-1}q\cN_\Phi(F)^{\frac{1}{n}}\left(\log(-\frac{\alpha}{q}\psi_1+\Lambda_1)\right)^{1-p/n}\leq \frac{n}{p-n}\alpha^{-1}q\cN_\Phi(F)^{\frac{1}{n}}(\log\Lambda_1)^{1-\frac{p}{n}}.$$
 With the choice of $s_0$, we have $0<r\psi\leq \frac{1}{2}s_0$. Then on $\Omega_{s_0}$, we have \[
    u_{c,\delta}-u\geq s_0-r\psi\geq \frac{s_0}{2}.
\]
As a consequence, we have \[
    \phi(s_0)=\int_{\Omega_{s_0}}f\omega^n\leq \int_M\frac{(u_{c,\delta}-u)_+^{1/q^*}}{(\frac{s_0}{2})^{1/q^*}}f\omega^n\leq \frac{C}{s_0^{1/q^*}}||(u_{c,\delta}-u)_+||_{L^1(\omega^n)}^{1/q^*}||f||_{L^q(\omega^n)},
\]
where $\frac{1}{q}+\frac{1}{q^*}=1$.

Since $||f||_{L^q(\omega^n)}\leq C(M,\omega)e^{\Lambda_1}$, so \[
    \phi(s_0)\leq C(M,\omega,q,p,n,\cN_\Phi(F))e^{\Lambda_1}s_0^{-\frac{1}{q^*}}||(u_{c,\delta}-u)_+||^{\frac{1}{q^*}}_{L^1(\omega^n)}
\]
\end{proof}

Now we go to the iteration process. As before we have\[
    t\phi(t+s)\leq A_{s},
\]
and \[
    A_{s}\leq c_nA_{s,j}^{\frac{1}{n+1}}\int_M(-\psi_{2,j}+\Lambda_{2,j})^{\frac{n}{n+1}}f\omega^n.
\]

For $s\geq s_0$ we have \begin{equation}\label{estimatelambda2}
\Lambda_{2,j}=\frac{n}{n+1}\frac{A_{s,j}}{r^{n+1}}\leq C(M,\omega,n,p,q,\cN_\Phi(F))\frac{\phi(s_0)}{r^{n+1}}\leq C(M,\omega,n,p,q,\cN_\Phi(F))e^{\Lambda_1}s_0^{-\frac{1}{q^*}}\frac{||(u_{c,\delta}-u)_+||_{L^1(\omega^n)}^{\frac{1}{q^*}}}{r^{n+1}} \end{equation}

So if we choose $r$ which ensures  $\Lambda_{2,j}$ bounded uniformly, then 
\[
\begin{aligned}
A_{s,k}&\leq c_nA_{s,j}^{\frac{1}{n+1}}\left(\int_{\Omega_s}(-\psi_{2,j}+\Lambda_{2,j})^{\frac{nq_2}{n+1}}f\omega^n\right)^{\frac{1}{q_2}}\phi(s)^{1-\frac{1}{q_2}}\\
&\leq c_nA_{s,j}^{\frac{1}{n+1}}\left(\int_{M}(-\psi_{2,j}+\Lambda_{2,j})^{\frac{nq_2q^*}{n+1}}\omega^n\right)^{\frac{1}{q_2q^*}}||f||_{L^q(\omega^n)}^{\frac{1}{q_2}}\phi(s)^{1-\frac{1}{q_2}}
\end{aligned}\] 
Let $j\rightarrow \infty$, we get \[
    A_s\leq C(M,\omega,n,p,q,q_2,\cN_\Phi(F))e^{\frac{1}{q_2}\Lambda_1}A_s^{\frac{1}{n+1}}\phi(s)^{1-\frac{1}{q_2}}.
\]
Let $\delta_0=(1-\frac{1}{q_2})\frac{n+1}{n}-1>0$ by fix $q_2>n+1$.
Then we get \[
    A_s\leq C(M,\omega,n,p,q,q_2,\cN_\Phi(F))e^{\frac{1}{q_2}\Lambda_1}\phi(s)^{1+\delta_0}, \text{ for }s\geq s_0.
\]
By De Giorgi's iteration lemma \ref{DeGiorgiiterationlemma} we get $
    \phi(s_\infty)=0,$ and \begin{equation}\label{estimatesinfty}
    \begin{aligned}
     s_\infty&\leq \frac{2}{1-2^{-\delta_0}}C(M,\omega,n,p,q,q_2,\cN_\Phi(F))e^{\frac{1}{q_2}\Lambda_1}\phi(s_0)^{\delta_0}+s_0.\\
     &\leq C(M,\omega,n,p,q,\cN_\Phi(F))e^{(\delta_0+\frac{1}{q_2})\Lambda_1}s_0^{-\frac{\delta_0}{q^*}}||(u_{c,\delta}-u)_+||_{L^1(\omega^n)}^{\frac{\delta_0}{q^*}}+s_0.
\end{aligned}
\end{equation}In the second inequality we have used lemma \ref{controlofphi(s0)byapproximationofu}.

Now, we can choose $r$ as follows: 
First choose $\Lambda_1=-\frac{1}{B}\log\delta$, for some large $B$ which depends on $n,q$ only. Then for $r$,
\begin{enumerate}
\item If $\Phi(x)=|x|^p,p>n,$ then $r$ is determined by $\Lambda_1=(r^{-1}\cN_\Phi(F)^{\frac{1}{n}})^{\frac{n}{p}}$. 
\item If$\Phi(x)=|x|^n(\log|x|)^{p},p>n,$, then $r$ is determined by $  \Lambda_1(\log\Lambda_1)^{\frac{p}{n}}=r^{-1}\cN_\Phi(F)^{\frac{1}{n}}$.
\end{enumerate}

With such a choice of $r,\Lambda_1$, there exists a uniform constant $C$ such that $s_0\leq C|\log\delta|^{1-\frac{p}{n}}$ when $\Phi(x)=|x|^p$, and $s_0\leq C|\log(-\log\delta)|^{1-\frac{p}{n}}$ when $\Phi(x)=|x|^n(\log|x|)^{p}$. Therefore from equation (\ref{estimatelambda2}) and proposition \ref{l1approximation} that $||(u_{c,\delta}-u)_+||_{L^1(\omega^n)}\leq C\delta^2$, there exists a uniform constant such $C$ such that  $\Lambda_{2,j}\leq C$. 
Moreover, from equation (\ref{estimatesinfty}), \[
    s_\infty\leq C(M,\omega,q,p,n,\cN_\Phi(F))|\log\delta|^{1-\frac{p}{n}}, \quad \text{for }\Phi(x)=|x|^p,
\]and \[
    s_\infty\leq C(M,\omega,q,p,n,\cN_\Phi(F))|\log(-\log\delta)|^{1-\frac{p}{n}}, \quad \text{for } \Phi(x)=|x|^n(\log|x|)^{p}.
\]

Next, we choose $c=|\log\delta|^{1-\frac{p}{n}}, \text{for }\Phi(x)=|x|^p$, and $c=|\log(-\log\delta)|^{1-\frac{p}{n}},\text{for } \Phi(x)=|x|^n(\log|x|)^{p}.$ We claim that there exists a uniform constant $\delta_1$, such that $$\rho_\delta u(x)-u(x)\leq C(M,\omega,n,p,q,\cN_\Phi(F))c, \quad \forall \delta\leq \delta_1.$$

Since $\psi>0$, we have \[
    u_{c,\delta}-u\leq 2ru_{c,\delta}+Cc.
\]
From the definition of $u_{c,\delta}=(1-Ac)U_{c,\delta}$ and $$U_{c,\delta}=\inf_{t\in(0,\delta]}\{\rho_t u+Kt-c\log(\frac{t}{\delta})-K\delta\},$$
we have there exists $t_x\in (0,\delta]$ such that \[
    \rho_{t_x}u(x)+Kt_x-c\log(\frac{t_x}{\delta})-K\delta=U_{c,\delta}(x)\leq u(x)+A'cU_{c,\delta}(x)+2ru_{c,\delta}+Cc.
\]
Since $\rho_tu+Kt$ is increasing in $t$, we have $\rho_{t_x}u(x)+Kt_x\geq u(x)$. 
So \[
    -\log(\frac{t_x}{\delta})\geq K\frac{\delta}{c}+A'U_{c,\delta}+\frac{2r}{c}u_{c,\delta}+C.
\]
From the choice of $r,s_0,c$ we have \[
    -\log\frac{t_x}{\delta}\geq -C.
\] In other words, there exists a uniform constant $\theta$ such that $t_x\geq \theta\delta$. So using the fact that $\rho_tu+KT$ is increasing in $t$ again, and the observation $\delta\leq c$, we have \[
\begin{aligned}
  \rho_{\theta\delta}+K\theta t-u\leq &\rho_{t_x}+Kt_x-u\\
  \leq & c\log(\frac{t_x}{\delta})+K\delta+A'cU_{c,\delta}+2ru_{c,\delta}+Cc\\
  \leq &Cc.
\end{aligned}
\]
It implies that there is a universal constant $\delta_1>0$ such that $\forall \delta\leq  \delta_1$, we have \[
    \rho_\delta u(x)-u(x)\leq Cc=\left\{\begin{aligned}
    &\frac{C}{|\log\delta|^{\frac{p}{n}-1}} & \text{ when } 
       \Phi=|x|^p, p>n,\\
        &\frac{C}{(\log(-\log\delta))^{p/n-1}}, & \text{ when } \Phi(x)=|x|^n\log^{p}(|x|), p>n. 
    \end{aligned}\right.
\]

The left is standard, c.f. \cite{guo2021modulus,zeriahi2020}.

\end{proof}

\section{Stability estimate}
We can use the relative $L^\infty$-estimate to prove a stability result also. The key observation is that we can refine our choice of $f$ in the comparison inequality $\omega_v^n\leq a^n\omega_{\psi}^n+f\omega^n$. In previous section, we just choose $$f=\exp\{-\frac{\alpha}{q}\psi_1+\Lambda_1\}.$$
In fact, we can choose $\tilde{f}=\min(\frac{\omega_v^n}{\omega^n},f)$. Then we have additional condition $f\omega^n\leq \omega_v^n$. This is important in the proof of the decay rate of the quantity $\int_{E_s}f\omega^n$. With this one can generalize the stability estimate under a weaker assumption of Orlicz norm of the volume form. 
In the following, we say that $\varphi$ is an admissible solution to \begin{equation}\label{hessianequations}
    (\omega_t+\ddc\varphi)^k\wedge\omega_M^{n-k}=c_te^F\omega_M^n\end{equation} if $\lambda[\omega_t+\ddc\varphi]\in \Gamma_k$, where $\Gamma_k:=\{\lambda\in \RR^n|\sigma_i(\lambda)\geq 0, i=1,\cdots, k\}$, and $\lambda[\alpha]$ is the eigenvalues of a real $(1,1)$-form $\alpha$ with respect to $\omega_M$.
    
\begin{theorem}Let $(M,\omega_M)$ be a compact K\"ahler manifold with a given K\"ahler metric $\omega_M$ with normalization $[\omega_M]^n=1$. Let $\Phi$ be an increasing positive function such that $\int^\infty\Phi^{-\frac{1}{n}}(t)dt<\infty,$ and $\Phi(x)\leq e^{C_0x}$ for all $x\geq 0$.
Given $K>0$, let $G,H$ be smooth functions in\[
    \mathcal{K}:=\left\{F|\int_Me^F\omega_M^n=\int_M\omega_M^n, \int_M\Phi(|F|)e^{\frac{n}{k}F}\omega_M^n\leq K\right\}.
\]
Let $\chi$ be a closed nonnegative real $(1,1)$-form on $M$, and define $\omega_t=\chi+t\omega_M$. Let $u,v$ be smooth admissible solutions to \[
    (\omega_t+\ddc u)^k\wedge\omega_M^{n-k}=c_{t}e^G\omega_M^n,\quad (\omega_t+\ddc v)^k\wedge\omega_M^{n-k}=c_te^H\omega_M^n,
\]where $c_t:=\int_M\omega_t^k\wedge\omega_M^{n-k}$.  We normalize $u,v$ as \[
    \max(u-v)=\max(v-u).
\]There exists a constant $C(||e^G-e^H||_{L^1(\omega_M^n)}|M,\omega_M,n,K,\frac{c_t^{\frac{1}{k}}}{V_t^{\frac{1}{n}}},\Phi,C_0)$ such that \[
    \max(u-v)\leq C(||e^G-e^H||_{L^1(\omega_M^n)}|M,\omega_M,n,K,\frac{c_t^{\frac{1}{k}}}{V_t^{\frac{1}{n}}},\Phi,C_0),
\]
where $V_t:=\int_M\omega_t^n.$
\end{theorem}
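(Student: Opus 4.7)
The plan is to adapt the relative $L^\infty$-estimate of Proposition \ref{babyrelativeestimate} to a comparison between the two admissible solutions $u$ and $v$, with the key new input being the refinement $\tilde f=\min(f,\,c_t e^H)$ announced at the opening of this section. The refinement gives the extra pointwise bound $\tilde f\,\omega_M^n\le(\omega_t+\ddc v)^k\wedge\omega_M^{n-k}$, which is what will allow the initial level-set mass to be controlled by $\|e^G-e^H\|_{L^1(\omega_M^n)}$.

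First I would set up the comparison data. Since $\int^\infty\Phi(t)^{-1/n}dt<\infty$ and $G,H\in\mathcal K$, Theorem \ref{mosertrudingertype} applied to $u$ and $v$ separately gives uniform $L^\infty$-bounds $\|u\|_\infty,\|v\|_\infty\le C_\infty$ depending only on $M,\omega_M,n,K,\Phi,c_t^{1/k}/V_t^{1/n}$. The same construction also supplies a $\omega_t$-psh comparison potential $\psi$ and a weight $f$ with $\|f\|_{L^q(\omega_M^n)}$ controlled uniformly for any fixed $q>1$, satisfying at the level of the $k$-Hessian
\[
(\omega_t+\ddc v)^k\wedge\omega_M^{n-k}\le a^k(\omega_t+\ddc\psi)^k\wedge\omega_M^{n-k}+f\,\omega_M^n
\]
for any prescribed $a\in(0,1)$. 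I then replace $f$ by $\tilde f=\min(f,\,c_t e^H)$; the $L^q$-bound is preserved, and the additional upper bound $\tilde f\,\omega_M^n\le(\omega_t+\ddc v)^k\wedge\omega_M^{n-k}$ now holds pointwise.

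Next I would run the relative iteration on $w:=u-v$. For $s>0$ set $E_s=\{w>s\}$, $\phi(s)=\int_{E_s}\tilde f\,\omega_M^n$ and $A_{s,j}=\int_M\eta_j(w-s)\tilde f\,\omega_M^n$. Let $\psi_{2,j}$ solve the auxiliary Monge-Amp\`ere equation with density $A_{s,j}^{-1}\eta_j(w-s)\tilde f\,\omega_M^n$ and consider
\[
\Phi_j:=(w-s)-a\psi-\epsilon_j(-\psi_{2,j}+\Lambda_j)^{n/(n+1)},
\]
with $\epsilon_j,\Lambda_j$ chosen exactly as in Proposition \ref{babyrelativeestimate}. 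The maximum-principle argument there carries over once the $k$-Hessian equations for $u$ and $v$ are compared using the structure conditions in the form exploited in Section 3, so that $\Phi_j\le 0$. Applying the H\"older estimate of \eqref{estimateAs} together with the uniform $L^q$-bound on $\tilde f$ produces the De Giorgi iteration inequality $t\phi(t+s)\le C_1\phi(s)^{1+\delta_0}$ for all $s\ge s_0$, uniformly in the free parameters.

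The decisive step, and the main obstacle of the proof, is bounding $\phi(s_0)$ for a well-chosen initial level $s_0$ by $\|e^G-e^H\|_{L^1(\omega_M^n)}$. The strategy is to use the telescoping identity
\[
(\omega_t+\ddc u)^k\wedge\omega_M^{n-k}-(\omega_t+\ddc v)^k\wedge\omega_M^{n-k}=\ddc(u-v)\wedge T,
\]
with $T=\sum_{j=0}^{k-1}(\omega_t+\ddc u)^j\wedge(\omega_t+\ddc v)^{k-1-j}\wedge\omega_M^{n-k}$ a closed positive $(n-1,n-1)$-current (the relevant positivity following from the $\Gamma_k$-admissibility of $u,v$ combined with $\omega_M$), and to integrate $(w-s_0)_+$ against both sides. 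Integration by parts on $E_{s_0}$ produces a nonpositive energy term on the left, yielding
\[
\int_{E_{s_0}}dw\wedge d^c w\wedge T\le c_t\,C_\infty\,\|e^G-e^H\|_{L^1(\omega_M^n)}.
\]
Combining this energy bound with the refinement $\tilde f\,\omega_M^n\le(\omega_t+\ddc v)^k\wedge\omega_M^{n-k}$ and the uniform $L^\infty$-control $\|w\|_\infty\le 2C_\infty$, one extracts a volume bound of the form $\phi(s_0)\le\Theta(\|e^G-e^H\|_{L^1})$ with $\Theta(t)\to 0$ as $t\to 0$. The refinement is essential here: the weak Orlicz hypothesis on $\Phi$ rules out any pointwise lower bound on the original $f$, but $\tilde f$ is automatically dominated by the $v$-side volume form and therefore slots cleanly into the integration-by-parts computation. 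Feeding $\phi(s_0)\le\Theta(\|e^G-e^H\|_{L^1})$ into Lemma \ref{DeGiorgiiterationlemma} gives $\phi\equiv 0$ for $s\ge s_0+\frac{2C_1}{1-2^{-\delta_0}}\Theta(\|e^G-e^H\|_{L^1})^{\delta_0}$, which, after optimising over $s_0$ and the free parameters, is exactly the stated stability estimate with an explicit modulus $C(t|\cdots)$ vanishing as $t\to 0$.
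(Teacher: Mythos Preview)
Your setup and the De Giorgi iteration are broadly aligned with the paper, but the ``decisive step'' --- the bound on $\phi(s_0)$ --- has a genuine gap. From the telescoping identity and integration by parts you obtain an \emph{energy} bound
\[
\int_{E_{s_0}} dw\wedge d^c w\wedge T \;\lesssim\; \|e^G-e^H\|_{L^1},
\]
and then assert that, together with $\tilde f\,\omega_M^n\le(\omega_t+\ddc v)^k\wedge\omega_M^{n-k}$ and the $L^\infty$-bound on $w$, this yields a \emph{volume} bound $\phi(s_0)\le\Theta(\|e^G-e^H\|_{L^1})$. That implication does not follow: the energy integral measures $|\nabla w|^2$ against $T$, and can be arbitrarily small on a set of large $\tilde f$-measure (think of $w$ nearly constant and slightly above $s_0$ on a big plateau). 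The refinement only gives $\phi(s_0)\le c_t\int_{E_{s_0}}e^H\omega_M^n$, and nothing in your argument forces the latter to be small.

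The paper's actual mechanism (Lemma~\ref{controlAs}) is quite different and does not use integration by parts at all. One splits $M$ into $S=\{e^G\le(1-r^2)e^H\}$, on which the trivial bound $f\le e^H$ and the definition of $S$ already give $\int_S f\le r^{-2}\|e^G-e^H\|_{L^1}$, and its complement. On $M\setminus S$ one solves an auxiliary $k$-Hessian equation for $\rho_j$ with right-hand side $\approx\tfrac{3}{2}c_te^H$ on $\{v\le u\}$ (this uses the normalization $\int_{\{v\le u\}}e^H\le\tfrac12$), forms the convex combination $(1-r)u+\tfrac34 r\rho_j$, and checks that its $k$-Hessian volume dominates $c_t(1+c_0r)e^H$ on $\{v\le u\}\setminus S$. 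The \emph{pluripotential comparison principle} $\int_{\{v<\tilde u\}}\omega_{\tilde u}^k\wedge\omega_M^{n-k}\le\int_{\{v<\tilde u\}}\omega_v^k\wedge\omega_M^{n-k}$ then converts the strict gain $(1+c_0r)$ into the desired bound $\int_{E_{s_0}\setminus S}e^H\lesssim r^{-3}\|e^G-e^H\|_{L^1}$. The hypothesis $\Phi(x)\le e^{C_0x}$ is used afterwards to tie $r^{-1}$ to $e^{C\Lambda_1}$ and close the iteration; your sketch never invokes it. In short, the missing idea is the comparison-principle argument with the boosted auxiliary solution $\rho_j$, not an energy estimate.
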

In the above theorem we use the notation that $C(t|a,b,c,\ldots)$  to mean a constant depending on $t,a,b,c,\ldots$ and \[
    \lim_{t\rightarrow 0}C(t|a,b,c,\ldots)=0.
\]
In fact the constant $C(t|a,b,\cdots,)$ in the theorem can be made explicit as  \begin{equation}\label{expliciteexpresionconstant}
 C\int_{-\frac{1}{C}\log||e^G-e^H||_{L^1(\omega_M^n)}}^\infty \Phi^{-\frac{1}{n}}(x)dx+C\Phi^{-\frac{1}{n}}(-\frac{1}{C}\log(||e^G-e^H||_{L^1(\omega_M^n)})) +C||e^G-e^H||^\eta_{L^1(\omega_M^n)},
\end{equation} for a uniform constant $\eta>0,C>0$ depending on $M,\omega_M,n,K,\frac{c_t^{\frac{1}{k}}}{V_t^{\frac{1}{n}}},\Phi, C_0$. 

When $\Phi(x)=x^n\log^p(x) $ for $x>2$, and $p>n,$ the constant $C(t|a,b,\cdots)>0$ can be chosen to be \[
\frac{C}{\log^{\frac{p}{n}-1}(-\log||e^G-e^{H}||_{L^1(\omega_M^n)})}, \text{ for some uniform constant}  C>0.
\]
\begin{proof}
We suppress the subscript $t$ in $\omega_t$ in the following. 
Consider the auxiliary equation\[
    (\omega+\ddc\psi_1)^n=V_t\frac{e^{\frac{n}{k}H}\Phi(H)\omega_M^n}{\cN_\Phi},\quad \sup\psi_1=0,
\]
where $\cN_\Phi:=\int_Me^{\frac{n}{k}H}\Phi(H)\omega_M^n$, and $V_t=\int_M\omega^n$.
For any $a\in(0,1),q>1$, and $\alpha$ denoting an upper bound for the $\alpha$-invariant for metric $\omega_t$, define function $h(s):\mathbb{R}\rightarrow \mathbb{R}$ as \[
    h(s)=-\frac{c_t^{\frac{1}{k}}}{V_t^{\frac{1}{n}}}\int_s^\infty a^{-1}\alpha^{-1}q\cN_\Phi^{\frac{1}{n}}\Phi(t)^{-\frac{1}{n}}dt.
\]
Choose $\Lambda_1$ such that $\frac{\alpha}{q}h'(\Lambda_1)\leq1$, then one has $\psi:=-h(-\frac{\alpha}{q}\psi_1+\Lambda_1)\in Psh(M,\omega)$, \[
    \omega+\ddc\psi\geq \frac{\alpha}{q}h'(-\frac{\alpha}{q}\psi_1+\Lambda_1)(\omega+\ddc\psi_1).
\]
Therefore, \[
    \left(\frac{\omega_\psi^n}{\omega_M^n}\right)^{\frac{k}{n}}\geq \frac{\alpha^k}{q^k}h'^k\left(\frac{\omega_{\psi_1}^n}{\omega_M^n}\right)^{\frac{k}{n}}=\frac{\alpha^kh'^k V_t^{\frac{k}{n}}\Phi^{\frac{k}{n}}(H)}{q^k\cN^{\frac{k}{n}}}e^H.
\] 
And a similar argument in section 2 shows\begin{equation}\label{sigmakcomparison}
    \omega_v^k\wedge\omega_M^{n-k}=c_te^H\omega_M^n\leq a^k\left(\frac{\omega_\psi^n}{\omega_M^n}\right)^{\frac{k}{n}}\omega_M^n+c_tf\omega_M^n,
\end{equation}with $f=\min\{\exp\{-\frac{\alpha}{q}\psi_1+\Lambda_1\},e^H\}$.

Next, we consider another auxiliary equation\[
    (\omega+\ddc\psi_{2,j})^{n}=V_t\frac{\eta_j(-v+(1-r)u+a\psi-s)f^{\frac{n}{k}}\omega_M^n}{A_{s,j}},\quad \sup\psi_{2,j}=0.
\]
Here $A_{s,j}:=\int_M\eta_j(-v+(1-r)u+a\psi-s)f^{\frac{n}{k}}\omega_M^n,$ and $\eta_j(x)$ is a sequence of smooth positive functions approximating $x_+$ from above. It's clear that $\lim_{j\rightarrow \infty}A_{s,j}=A_s:=\int_{E_s}(-v+(1-r)u+a\psi-s)_+f^{\frac{n}{k}}\omega_M^n.$

Now, consider the function \[
    \Psi_{j}:=(-v+(1-r)u+a\psi-s)-\epsilon_{2,j}(-\psi_{2,j}+\Lambda_{2,j})^{\frac{n}{n+1}}.
\]
The constant $\epsilon_{2,j}$ and $\Lambda_{2,j}$ are chosen as
\begin{equation}\label{sigmakchoiceofepsilonandlambda}
    \epsilon_{2,k}=\left(\frac{c_t^{\frac{1}{k}}}{V_t^{\frac{1}{n}}}\right)^{\frac{n}{n+1}}\left(\frac{n+1}{n}\right)^{\frac{n}{n+1}}A_{s,j}^{\frac{1}{n+1}},\quad \Lambda_{2,j}=\left(\frac{c_t^{\frac{1}{k}}}{V_t^{\frac{1}{n}}}\right)^n\frac{n}{n+1}\frac{1}{(r-a)^{n+1}}A_{s,j}.
\end{equation}
We claim that $\Psi_{s,j}\leq 0$. Let $x_0$ be the point where $\Psi_{s,j}$ attains its maximum, we can assume $x_0\in E_s:=\{-v+(1-r)u+a\psi-s\geq 0\}$, otherwise we already have $\Psi_{s,j}\leq 0$.
At $x_0$, we have \[
    \begin{aligned}
    0\geq &-\ddc v+(1-r)\ddc u+a\ddc\psi+\epsilon_{2,j}\frac{n}{n+1}(-\psi_{3,j}+\Lambda_{2,j})^{-\frac{1}{n+1}}\\
    \geq&-\omega_v+(1-r)\omega_u+a\omega_{\psi}+\epsilon_{2,j}\frac{n}{n+1}(-\psi_{3,j}+\Lambda_{2,j})^{-\frac{1}{n+1}}\omega_{\psi_{2,j}}
    \end{aligned}
\]
Therefore\[\begin{aligned}
\omega_v^k\wedge\omega_M^{n-k}\geq &\left((1-r)\omega_u+a\omega_{\psi}+\frac{\epsilon_{2,j}n}{n+1}(-\psi_{2,j}+\Lambda_{2,j})^{-\frac{1}{n+1}}\omega_{\psi_{2,j}}\right)^{k}\wedge\omega_M^{n-k}\\
\geq & a^k(\omega_\psi)^k\wedge\omega_M^{n-k}+\left(\frac{\epsilon_{2,j}n}{n+1}\right)^k(-\psi_{2,j}+\Lambda_{2,j})^{-\frac{k}{n+1}}(\omega_{\psi_{2,j}})^k\wedge\omega_M^{n-k}\\
\geq &a^k\left(\frac{\omega_{\psi}^n}{\omega_M^n}\right)^{\frac{k}{n}}\omega_M^n+\left(\frac{\epsilon_{2,j}n}{n+1}\right)^k(-\psi_{2,j}+\Lambda_{2,j})^{-\frac{k}{n+1}}\left(\frac{\omega_{\psi_{2,j}}^n}{\omega_M^n}\right)^{\frac{k}{n}}\omega_M^n .
\end{aligned}
\]
On the other hand, by \eqref{sigmakcomparison}, we get \[
    c_tf\omega_M^n+a^k\left(\frac{\omega_{\psi}^n}{\omega_M^n}\right)^{\frac{k}{n}}\omega_M^n\geq a^k\left(\frac{\omega_{\psi}^n}{\omega_M^n}\right)^{\frac{k}{n}}\omega_M^n+\left(\frac{\epsilon_{2,j}n}{n+1}\right)^k(-\psi_{2,j}+\Lambda_{2,j})^{-\frac{k}{n+1}}\left(\frac{\omega_{\psi_{2,j}}^n}{\omega_M^n}\right)^{\frac{k}{n}}\omega_M^n.
\]
Hence \[
    \eta_{j}(-v+(1-r)u+a\psi-s)\leq \left(\frac{c_t^{\frac{1}{k}}(n+1)}{V_t^{\frac{1}{n}}n\epsilon}\right)^{n}(-\psi_{2,j}+\Lambda_{2,j})^{\frac{n}{n+1}}.
\]
With the choice of $\epsilon_{2,j},\Lambda_{2,j}$ as \eqref{sigmakchoiceofepsilonandlambda}, the above inequality is equivalent to \[
    \eta_j(-v+(1-r)u+a\psi-s)\leq \epsilon_{2,j}(-\psi_{2,j}+\Lambda_{2,j})^{\frac{n}{n+1}}.
\]

In the following, we use $\beta$ to denote a uniform upper bound for solutions to equations \[
    (\omega_t+\ddc\varphi)^k\wedge\omega_M^{n-k}=c_te^F\omega_M^n,\quad \sup\varphi=0,
\] with $F\in \mathcal{K}$. Let $\phi(s):=\int_{E_s}f^{\frac{n}{k}}\omega_M^n.$

\begin{lemma}
There exists a uniform constant $B>0$ such that when $s>s_0=\sup_{M}(a\psi)+3\beta r$, and $\Lambda_1\leq -\frac{1}{B}\log(||e^G-e^H||_{L^1(\omega_M^n)})$, we have \[
t\phi(s+t)\leq Ce^{C\Lambda_1}\phi(s)^{1+\delta}, \forall t>0,
\]for a uniform constant $C$ and $\delta>0$.
\end{lemma}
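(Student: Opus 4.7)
The plan is to implement De Giorgi iteration for the level-set function $\phi$, tracking the $\Lambda_1$-dependent factors carefully. The cut-off step is immediate: on $E_{s+t}$ one has $-v+(1-r)u+a\psi-s\ge t$, so
\[
t\,\phi(s+t)\;\le\;\int_{E_{s+t}}\bigl(-v+(1-r)u+a\psi-s\bigr)_+\,f^{n/k}\,\omega_M^n\;\le\;A_s,
\]
and it therefore suffices to prove $A_s\le Ce^{C\Lambda_1}\phi(s)^{1+\delta}$ for $s\ge s_0$.

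Starting from $\Psi_j\le 0$, which is already established above, I would multiply both sides by $f^{n/k}$ and integrate over $E_s$; using $\epsilon_{2,j}=C\,A_{s,j}^{1/(n+1)}$ (with $C$ depending on $c_t^{1/k}/V_t^{1/n}$) this yields
\[
A_{s,j}^{n/(n+1)}\;\le\;C\int_{E_s}(-\psi_{2,j}+\Lambda_{2,j})^{n/(n+1)}\,f^{n/k}\,\omega_M^n.
\]
I would then apply Hölder with an exponent $p>n+1$ to pull out a factor $\phi(s)^{1-1/p}$, and then apply Hölder a second time to split the remaining integral into an $L^{r_1}$-factor of $(-\psi_{2,j}+\Lambda_{2,j})^{n/(n+1)}$ and an $L^{r_2}$-factor of $f^{n/k}$. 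Tian's $\alpha$-invariant applied to the $\omega$-psh function $\psi_{2,j}$ (normalized by $\sup\psi_{2,j}=0$) controls the first factor uniformly in every $L^{r_1}$. For the second, the definition $f\le\exp\{-\alpha\psi_1/q+\Lambda_1\}$ combined with the $\alpha$-invariant applied to $\psi_1$ gives $\|f^{n/k}\|_{L^{r_2}}\le Ce^{C\Lambda_1}$, provided $q$ is chosen large enough relative to $k$ so that $\alpha$-invariance is applicable at the needed exponent.

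The last ingredient is a uniform bound on $\Lambda_{2,j}$, since the Hölder step above treats it as a constant. Because $s\mapsto A_s$ has derivative $-\phi(s)\le 0$, it is nonincreasing, so $A_{s,j}\to A_s\le A_{s_0}$ for $s\ge s_0$; hence $\Lambda_{2,j}\le C(r-a)^{-n-1}A_{s_0}$. This is exactly where the hypothesis $\Lambda_1\le -B^{-1}\log\|e^G-e^H\|_{L^1(\omega_M^n)}$ enters: together with the companion estimate (Lemma~\ref{controlAs}, which will bound $A_{s_0}$ by a power of $\|e^G-e^H\|_{L^1(\omega_M^n)}$ times a controlled exponential factor in $\Lambda_1$, by using the Hessian equations satisfied by $u$ and $v$ and the refined choice $f\le e^H$), the assumption on $\Lambda_1$ forces $A_{s_0}\le C$ and hence $\Lambda_{2,j}\le C$ uniformly in $s\ge s_0$ and $j$. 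Assembling the two Hölder estimates, raising the resulting inequality to the power $(n+1)/n$, and sending $j\to\infty$ yields
\[
A_s\;\le\;C\,e^{C\Lambda_1}\,\phi(s)^{\frac{n+1}{n}(1-\frac{1}{p})},
\]
and because $p>n+1$ the exponent is strictly greater than $1$, giving the lemma with $\delta=\frac{n+1}{n}\bigl(1-\frac{1}{p}\bigr)-1>0$.

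The main obstacle is the uniform $\Lambda_{2,j}$ bound, which is precisely why the refined choice $f=\min\{\exp\{-\alpha\psi_1/q+\Lambda_1\},e^H\}$ is essential and why the hypothesis couples $\Lambda_1$ to $\|e^G-e^H\|_{L^1(\omega_M^n)}$; without the extra pointwise bound $f\le e^H$ one could not extract a useful smallness from the stability hypothesis inside $A_{s_0}$.
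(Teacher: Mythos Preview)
Your proposal is correct and follows essentially the same route as the paper: the cut-off step $t\phi(s+t)\le A_s$, the H\"older splitting of $A_s$ with exponent $p>n+1$, the $\alpha$-invariant bound on $\psi_{2,j}$, and the reduction of the $\Lambda_{2,j}$ control to Lemma~\ref{controlAs} all match. Two points you leave implicit but which the paper makes explicit: first, Lemma~\ref{controlAs} only bounds $\int_{E_{s_0}}f\,\omega_M^n$, and to reach $A_{s_0}$ (which carries $f^{n/k}$ and a prefactor) the paper interpolates via H\"older between $\int_{E_{s_0}}f$ and $\|f\|_{L^q}$ and absorbs the prefactor using the uniform $L^\infty$ bounds $|u|,|v|\le\beta$; second, the factor $(r-a)^{-(n+1)}r^{-3b}$ appearing in the $\Lambda_{2,j}$ bound is itself $\le Ce^{C\Lambda_1}$, which follows from the relation $\Phi(\Lambda_1)^{1/n}\ge Cr^{-1}$ (coming from $\frac{\alpha}{q}h'(\Lambda_1)=1$) together with the hypothesis $\Phi(x)\le e^{C_0x}$, and this is precisely where the constant $B$ in the statement originates.
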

\begin{proof}

First, we use the lemma \ref{controlAs} to uniformly bound $\Lambda_{2,j}$ as \begin{equation}\label{uniformLambda2j}
\begin{aligned}
    \Lambda_{2,j}=& \left(\frac{c_t^{\frac{1}{k}}}{V_t^{\frac{1}{n}}}\right)^n\frac{n}{n+1}\frac{A_s}{(r-a)^{n+1}}\\
    = & \left(\frac{c_t^{\frac{1}{k}}}{V_t^{\frac{1}{n}}}\right)^nc_n\frac{\int_{E_s}\eta_{j}(-v+(1-r)u+a\psi-s)f^{\frac{n}{k}}\omega^n}{(r-a)^{n+1}}\\
    \leq & \left(\frac{c_t^{\frac{1}{k}}}{V_t^{\frac{1}{n}}}\right)^nC(n,\beta,C_0)\frac{1}{(r-a)^{n+1}}\int_{E_s}f^{\frac{n}{k}}\omega_M^n\\
    \leq & \left(\frac{c_t^{\frac{1}{k}}}{V_t^{\frac{1}{n}}}\right)^n\frac{1}{(r-a)^{n+1}}\left(\int_{E_s}f^{(\frac{n}{k}-b)\frac{1}{1-b}}\omega_M^N\right)^{1-b}\left(\int_{E_s}f\omega_M^n\right)^b\\
    \leq&  \left(\frac{c_t^{\frac{1}{k}}}{V_t^{\frac{1}{n}}}\right)^nc(n,\beta,C_0)e^{q(1-b)\Lambda_1}\frac{||e^G-e^H||^b_{L^1(\omega_M^n)}}{(r-a)^{n+1}r^{3b}}, \text{ for any } s\geq s_0,
\end{aligned}
\end{equation}
where $b\in(0,1)$ is chosen such that $(\frac{n}{k}-b)\frac{1}{1-b}= q$, i.e. $b=\frac{q-\frac{n}{k}}{q-1}$.

We require that $\frac{\alpha}{q}h'(\Lambda_1)= 1$ to get the comparison metric, it implies  $\Phi(\Lambda_1)^{\frac{1}{n}}\geq Cr^{-1}$ for some uniform constant $C$. Note that we assume that $\Phi(x)\leq e^{C_0x}$ for $x\geq 0$, so we get $r^{-1}\leq e^{C\Lambda_1}$ for some uniform constant $C$. Now we choose $a=\frac{r}{4},$ plug it into \eqref{uniformLambda2j}, we find there exists a constant $B$ such that if $s>s_0=\sup_{M}(a\psi)+3\beta r$, and $\Lambda_1\leq -\frac{1}{B}\log(||e^G-e^H||_{L^1(\omega_M^n)})$, then $\Lambda_{2,j}$ is uniformly bounded. 

Next we estimate $A_{s,j}$ as \[
\begin{aligned}
A_{s}\leq  &C(n,\beta,C_0,\frac{c_t^{\frac{1}{k}}}{V_t^{\frac{1}{n}}})A_{s,j}^{\frac{1}{n+1}}\int_{\Omega_s}(-\psi_{2,j}+\Lambda_{2,j})^{\frac{n}{n+1}}f^{\frac{n}{k}}\omega_M^n\\
\leq &C_1A_{s,j}^{\frac{1}{k+1}}\left(\int_{M}(-\psi_{2,j}+\Lambda_{2,j})^{\frac{np}{n+1}}f^{\frac{n}{k}}\omega_M^n\right)^{1/p}(\int_{E_s}f^{\frac{n}{k}}\omega_M^n)^{1-1/p}.\\
\leq &C_2e^{C_3\Lambda_1}A_{s,j}^{\frac{1}{n+1}}\phi(s)^{1-\frac{1}{p}}, \text{ when } \Lambda_{2,j} \text{ is uniformly bounded.}
\end{aligned}
\]
In the above we used the estimate of \[
\int_{M}(-\psi_{2,j}+\Lambda_{2,j})^{\frac{np}{n+1}}f^{\frac{n}{k}}\omega_M^n\leq ||(-\psi_{2,j}+C)^{\frac{np}{n+1}}||_{L^{p_2}(\omega_M^n)}||f||_{L^q(\omega_M^n)}^{\frac{n}{kq}}\leq Ce^{C_3\Lambda_1}, \text{ for }p_2=\frac{qk}{qk-n}.
\]
Choose arbitrary $p>n+1$ and $\delta_0=(1-\frac{1}{p})\frac{n+1}{n}-1>0$, and let $j\rightarrow \infty$, we get \[
  A_s\leq C_2e^{C_3\Lambda_1}\phi(s)^{1+\delta}, \quad \text{for } s\geq s_0, \text{ and 
 }\Lambda_1\leq -\frac{1}{B}\log(||e^G-e^H||_{L^1(\omega_M^n)}).
\]\end{proof}

By the iteration lemma \ref{DeGiorgiiterationlemma} we have $\phi(s)=0$ for \begin{equation}\label{sinftystability}
    s\geq s_\infty=\frac{2C_2e^{C_3\Lambda_1}}{1-2^{-\delta_0}}\phi(s_0)^{\delta_0}+s_0.
\end{equation}
    
On the other hand, \begin{equation}
     \begin{aligned}\label{phis0stability}
    \phi(s_0)=&\int_{E_{s_0}}f^{\frac{n}{k}}\omega_M^n\\
    \leq &\left(\int_Mf^{q}\omega_M^n\right)^{1-b}\left(\int_{E_{s_0}}f\omega_M^n\right)^{b}\\
    \leq &Ce^{q(1-b)\Lambda_1}\frac{||e^G-e^H||_{L^1(\omega_M^n)}}{r^3}\\
    \leq &Ce^{C\Lambda_1}||e^G-e^H||_{L^1(\omega_M^n)}.
    \end{aligned}
\end{equation}
   
 And 
 $s_0$ can be estimated as \begin{equation}
     s_0\leq C\int_{\Lambda_1}^\infty\Phi^{-\frac{1}{n}}(x)dx+2\beta r\leq C(\int_{\Lambda_1}^\infty\Phi^{-\frac{1}{n}}(x)dx+\Phi^{-\frac{1}{n}}(\Lambda_1))
 \end{equation}
 Combine \eqref{sinftystability},\eqref{phis0stability},\eqref{phis0stability}, we get for some $\eta>0$, \[
 s_\infty\leq C\int_{-\frac{1}{C}\log||e^G-e^H||_{L^1(\omega_M^n)}}^\infty \Phi^{-\frac{1}{n}}(x)dx+C\Phi^{-\frac{1}{n}}(-\frac{1}{C}\log(||e^G-e^H||_{L^1(\omega_M^n)})) +C||e^G-e^H||^\eta_{L^1(\omega_M^n)}.
 \]
 It's clear that $s_\infty$ is a constant which congverges to $0$ as $||e^G-e^H||_{L^1(\omega_M^n)}\rightarrow 0.$ The proof of the theorem is completed.

\end{proof}

\begin{lemma}\label{controlAs}
$\int_{E_{s_0}} f\omega^n\leq \frac{c_1||e^G-e^H||_{L^1(\omega_M^n)}}{r^3}$ for $s_0=\sup_M(a\psi)+3\beta r$, and constant $c_1$ is uniform.
\end{lemma}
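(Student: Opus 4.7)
The key observation is that $E_{s_0}$ can be written as $\{\tilde v \geq v\}$, where $\tilde v := (1-r)u + a\psi - s_0$. Since
\[
\omega + \ddc\tilde v = (1-r)\omega_u + a\omega_\psi + (r-a)\omega
\]
lies in the positive cone whenever $r > a$ (which holds under the choice $a = r/4$ made in the proof of the theorem), the function $\tilde v$ is $\omega$-admissible in the $\Gamma_k$-sense, so the comparison principle for admissible $\sigma_k$-solutions applies to the pair $(\tilde v, v)$.

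I would first establish the quantitative pointwise bound $u - v \geq 2\beta r$ throughout $E_{s_0}$. Rearranging the defining inequality $(1-r)u + a\psi - v \geq s_0$ gives $u - v \geq ru + (s_0 - a\psi)$, and combining the choice $s_0 = \sup_M(a\psi) + 3\beta r$ with the uniform sup-bound $|u| \leq \beta$ (valid after an appropriate common translation of $u, v$ consistent with the normalization $\max(u-v) = \max(v-u)$ and the oscillation estimate $\beta$ for admissible solutions with right-hand side in $\mathcal K$) yields $u - v \geq -\beta r + 3\beta r = 2\beta r$; in particular, $E_{s_0} \subset \{u > v + 2\beta r\}$.

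The second step is an integration-by-parts argument based on the identity
\[
(\omega_{\tilde v}^k - \omega_v^k)\wedge\omega_M^{n-k} \;=\; \ddc(\tilde v - v)\wedge T, \qquad T := \sum_{i=0}^{k-1}\omega_{\tilde v}^i\wedge\omega_v^{k-1-i}\wedge\omega_M^{n-k}\geq 0,
\]
applied with the non-negative test function $(\tilde v - v)_+$. Integration by parts gives
\[
\int(\tilde v - v)_+\,(\omega_{\tilde v}^k - \omega_v^k)\wedge\omega_M^{n-k} \;=\; -\int d(\tilde v - v)_+\wedge d^c(\tilde v - v)\wedge T \;\leq\; 0,
\]
and bounding $\omega_{\tilde v}^k \geq (1-r)^k\omega_u^k$ and invoking the Hessian equations converts this into the relative comparison
\[
(1-r)^k \int(\tilde v - v)_+ e^G\,\omega_M^n \;\leq\; \int(\tilde v - v)_+ e^H\,\omega_M^n.
\]

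The final step converts the weighted-integral inequality into a bound on $\int_{E_{s_0}} f\,\omega_M^n$. Since $f \leq e^H$ by construction it suffices to control $\int_{E_{s_0}} e^H\,\omega_M^n$. Combining the comparison above with the absolute estimate $\left|\int(\tilde v - v)_+ (e^H - e^G)\omega_M^n\right| \leq \|\tilde v - v\|_\infty\,\|e^G - e^H\|_{L^1}$ produces a bound on $\int(\tilde v - v)_+ e^H\,\omega_M^n$ linear in $\|e^G - e^H\|_{L^1}$, with an overall factor $(1 - (1-r)^k)^{-1}\sim r^{-1}$. A Chebyshev-type inequality $\chi_{E_{s_0+h}} \leq h^{-1}(\tilde v - v - h)_+$ with threshold $h$ of order $r$ (natural because on $E_{s_0}$ the quantity $\tilde v - v$ is controlled linearly in $r$, via the lower bound $u - v \geq 2\beta r$ and the uniform bounds on $|u|$, $|v|$, $a\psi$) then transfers the weighted estimate to the $e^H$-mass of $E_{s_0}$. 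The accumulated $r^{-3}$ in the denominator comes from (i) the factor $(1-(1-r)^k)^{-1}\sim r^{-1}$ used to extract an absolute from a relative inequality, (ii) the Chebyshev threshold $h \sim r$, and (iii) a further $r^{-1}$ in absorbing the residual $e^G$-mass term produced by the comparison.

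\textbf{Main obstacle.} The central difficulty is to convert the comparison inequality, which only bounds $\int(\tilde v-v)_+ e^G$ \emph{relatively} in terms of $\int(\tilde v-v)_+ e^H$, into an \emph{absolute} estimate linear in $\|e^G - e^H\|_{L^1(\omega_M^n)}$. This conversion is only possible because $1 - (1-r)^k \sim kr$ is small, which forces $\int(\tilde v-v)_+ e^H$ to be close to $\int(\tilde v-v)_+ e^G$ in a sense measurable by $\|e^G - e^H\|_{L^1}$; getting the correct exponent $r^{-3}$ requires balancing this smallness against the Chebyshev threshold $h\sim r$ and the pointwise lower bound $u - v \geq 2\beta r$.
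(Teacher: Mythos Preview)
Your integration-by-parts step is correct and does yield
\[
(1-r)^k \int(\tilde v - v)_+\, e^G\,\omega_M^n \;\leq\; \int(\tilde v - v)_+\, e^H\,\omega_M^n,
\]
but this inequality goes the wrong way for your purpose: it is a \emph{lower} bound on $A_H:=\int(\tilde v - v)_+ e^H$, not an upper bound. Writing $A_G:=\int(\tilde v - v)_+ e^G$, you have $(1-r)^k A_G\le A_H$ together with $|A_H-A_G|\le C\|e^G-e^H\|_{L^1}$. From these two inequalities one can only conclude $A_H(1-(1-r)^k)\geq -(1-r)^kC\|e^G-e^H\|_{L^1}$, which is vacuous. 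The conclusion you state, namely $A_H\le (1-(1-r)^k)^{-1}C\|e^G-e^H\|_{L^1}$, would follow from the \emph{reversed} inequality $A_H\le (1-r)^k A_G$, but that is precisely what the comparison principle does \emph{not} give when the comparison function $\tilde v$ has Hessian measure dominated by $\omega_u^k$. The whole argument therefore collapses at the step ``produces a bound on $\int(\tilde v - v)_+ e^H$ linear in $\|e^G-e^H\|_{L^1}$''.

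The paper repairs exactly this point. It first isolates the bad set $S=\{e^G\le(1-r^2)e^H\}$, on which a direct Chebyshev gives $\int_S f\le r^{-2}\|e^G-e^H\|_{L^1}$. On the complement $M\setminus S$ one has $e^G\ge(1-r^2)e^H$, and the paper replaces your $a\psi$ by $\tfrac34 r\rho_j$, where $\rho_j$ is an auxiliary admissible potential solving $(\omega+\ddc\rho_j)^k\wedge\omega_M^{n-k}=c_t(\tfrac32\chi_j+c_j(1-\chi_j))e^H\omega_M^n$ with $\chi_j\to\chi_{\{v\le u\}}$. The point of this choice is that on $\{v\le u\}\setminus S$ the Newton-type lower bound for $((1-r)\omega_u+\tfrac34 r\omega_{\rho_j})^k\wedge\omega_M^{n-k}$ produces $(1+c_0 r)c_te^H\omega_M^n$ with $c_0>0$, i.e.\ a factor \emph{strictly larger than one}. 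Now the comparison principle gives $(1+c_0 r)\int_{E_{s_0}\setminus S}e^H\le \int_{E_{s_0}}e^H$, hence $c_0 r\int_{E_{s_0}\setminus S}e^H\le\int_{S}e^H\le r^{-2}\|e^G-e^H\|_{L^1}$, and the $r^{-3}$ follows. In short, the missing idea in your proposal is the construction of an auxiliary potential whose Hessian measure \emph{exceeds} $c_te^H$ on $\{v\le u\}$ by a definite factor; without this the comparison inequality has the wrong sign to be useful.
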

\begin{proof}
Assume that $||e^G-e^H||_{L^1(\omega_M^n)}$ is small, say it's less than $\frac{1}{5}$.

Switching $u,v$ if necessary, we assume that $\int_{\{v\leq u\}}e^H\omega_M^n\leq \frac{1}{2}$.
Consider $S:=\{e^G\leq (1-r^2)e^H\}$, and note the fact $f\leq e^H$, we have $$r^2\int_Sf\omega^n\leq r^2\int_Se^H\omega_M^n\leq \int_S(e^H-e^G)\omega_M^n\leq ||e^G-e^H||_{L^1(\omega_M^n)}.$$
So $\int_Sf\omega_M^n\leq r^{-2}||e^G-e^H||_{L^1(\omega_M^n)}.$

On the other hand, on $M\setminus S$, we have $e^F\geq (1-r^2)e^F$. 
Consider the auxiliary function $\rho_j$ which is the solutions to equations\[
    (\omega+\ddc\rho_j)^k\wedge\omega_M^n=c_t(\frac{3}{2}\chi_je^{H}+c_j(1-\chi_j)e^H)\omega_M^n, \quad \sup\rho_j=0,
\]
where $\chi_j$ is smooth positive and approximates $\chi_{\{v\leq u\}}$ in $L^\infty$, with $\chi_j\geq 1$ on $\{v\leq u\}$, and $c_j$ is a constant such that $\int_M(\frac{3}{2}\chi_je^H+c_j(1-\chi_j)e^H)\omega_M^n=1$. We can assume that $0\leq \chi_j\leq 1$. 
Since $\int_{\{v\leq u\}}(e^H+e^G)\omega_M^n\leq 1$, we have  \[
    \int_{\{v\leq u\}}e^H\omega_M^n= \frac{1}{2}\left(\int_{\{v\leq u\}}(e^{H}+e^G)\omega_M^n+\int_{\{v\leq u\}}(e^H-e^G)\omega_M^n\right)\leq \frac{3}{5}.
\]
Therefore by the volume-normalization, the constant $c_j$ satisfies \[
    \frac{1}{20}\leq c_j\leq 3.
\]
Hence $\cN_{\Phi'}(\log(\frac{3}{2}\chi_je^{\frac{n}{n}H}+c_j(1-\chi_j)e^{\frac{n}{k}H}))<100K$ when we choose $\Phi'=\Phi(x-100)$. By theorem \eqref{degenerateuniformestimate}, we have $0\leq -\rho_j\leq C$ uniformly, increasing $\beta$ if necessary, and we can  assume that $|u|\leq \beta, |v|\leq \beta, |\rho_j|\leq \beta.$ 
It leads to the following inclusions\[
    E_s\subset \{-v+(1-r)u+\frac{3}{4}r\rho_j-\frac{7}{4}\beta r\geq 0\}\subset \{-v+u\geq 0\},
\]for $s\geq s_0$.
On $\{-v+u\geq 0\}\setminus S$, we have $e^G\geq(1-r^2)e^H$, so
\[\begin{aligned}
((1-r)\omega_u+\frac{3}{4}r\omega_{\rho_j})^k\wedge\omega_M^{n-k}=c_t&\sum_{i=0}^{k}\tbinom{k}{i}(1-r)^{i}(\frac{3}{4}r)^{k-i}\omega_u^i\wedge\omega_{\rho_j}^{k-i}\wedge\omega_M^{n-k}\\
\geq &\sum_{i=0}^{k}\tbinom{k}{i}(1-r)^{i}(\frac{3}{4}r)^{k-i}(\omega_u^k\wedge\omega_M^{n-k})^{\frac{i}{k}}\wedge(\omega_{\rho_j}^k\wedge\omega_M^n)^{\frac{k-i}{k}}\\
\geq &c_t\sum_{i=0}^{k}\tbinom{k}{i}(1-r)^{i}(\frac{9}{8}r)^{k-i}(1-r^2)^{\frac{i}{k}}e^H\omega_M^n\\
=&c_t\left((1-r)(1-r^2)^{\frac{1}{k}}+\frac{9}{8}r\right)^ke^H\omega_M^n\\
\geq &c_t(1+c_0r)e^H\omega_M^n,
\end{aligned}
\] for some positive constant $c_0>0$.

So we have\[
    \begin{aligned}
    c_t(1+c_0r)\int_{\{-v+(1-r)u+\frac{3}{4}r\rho_j-\frac{7}{4}\beta r\geq 0\}\setminus S}e^H\omega_M^n\leq &\int_{\{-v+(1-r)u+\frac{3}{4}r\rho_j-\frac{7}{4}\beta r\geq 0\}\setminus S}\omega_{(1-r)u+\frac{3}{4}r\rho_j}^k\wedge\omega_M^{n-k}\\
    \leq &\int_{\{-v+(1-r)u+\frac{3}{4}r\rho_j-\frac{7}{4}\beta r\geq 0\}}\omega_v^{k}\wedge\omega_M^{n-k}\\
    =&c_t\int_{\{-v+(1-r)u+\frac{3}{4}r\rho_j-\frac{7}{4}\beta r\geq 0\}\setminus S}e^H\omega_M^n+c_t\int_{S}e^H\omega_M^n,
    \end{aligned}
\]which implies\[
    \int_{\{-v+(1-r)u+\frac{3}{4}r\rho_j-\frac{1}{4}\beta r\geq 0\}\setminus S}e^H\omega_M^n\leq \frac{1}{c_0r}\int_Se^H\omega_M^n\leq \frac{||e^H-e^G||_{L^1(\omega_M^n)}}{c_0r^3}.
\]
In conclusion, we get the desired estimate \[
    \int_{E_s}f\omega_M^n\leq\int_{E_s}e^H\omega_M^n\leq \frac{c_1||e^F-e^G||_{L^1(\omega_M^n)}}{r^3},
\]for some uniform constant $c_1$.

\end{proof}

\section{Degenerated class}
Let $(M,\omega_M)$ be a compact K\"ahler manifold, $\omega_M$ is a fixed K\"ahler metric with $\int_M\omega_M^n=1$. Let $\omega$ be a closed $(1,1)$-form on $X$ and assume that the class $[\omega]$ is nef. 
Take $\omega_t=\omega+t\omega_M$, $t\in (0,1]$. Then $[\omega_t]$ is a K\"ahler class though $\omega_t$ may not be positive. 
We define the upper envelop of $\omega_t$-psh functions to be \[
\mathcal{V}_t:=\sup\{u\in C^\infty(M,\RR)|\omega_t+\ddc u\geq 0, \text{ and }u\leq 0\}.
\]Note that $\mathcal{V}_t=0$ if the form $\omega_t$ is semipositive. 
Similary, define the upper envelop of form $\omega_t$ with respect the $\Gamma_k$-cone by \[
\cV_{t,k}:=\sup\{u\in C^\infty(M,\RR)|\lambda[\omega_t+\ddc u]\in \Gamma_k, u\leq 0\}.
\]
Recall that $\lambda[\alpha]$ means the eigenvalues of real $(1,1)$-form $\alpha$ with respect to $\omega_M$, and $\Gamma_k:=\{\lambda\in \RR^n|\sigma_{i}(\lambda)\geq 0, i=1,\cdots, k\}.$
Let $\Phi(x):\mathbb{R}\rightarrow \mathbb{R}^+$ be an increasing function and $\Phi(x)\rightarrow \infty$ as $x\rightarrow \infty$. Given $\Phi,F$, we denote $\cN_\Phi(F)=\int_M\Phi(F)e^F\omega_M^n$.
\begin{theorem}\label{degenerateuniformestimate}
Consider the family of Monge-Amp\`ere equations\[
    (\omega_t+\ddc\varphi_t)^k\wedge\omega_M^{n-k}=c_{t}e^{F_t}\omega_M^n, \quad \lambda[\omega_t+\ddc\varphi_t]\in \Gamma_k, \quad \sup\varphi_t=0,
\]where $c_t={[\omega_t]^k[\omega_M]^{n-k}}$ and $F$ is normalized to satisfy $\int_Me^{F_t}\omega_M^n=\int_M\omega_M^n=1$. Assume that $\int_0^\infty\frac{ds}{\Phi^{1/n}(s)}<\infty.$Then there exists a constant $C=C(M,\omega_M,n,\omega,c_t^{\frac{n}{k}}/[\omega_t^n],\cN_\Phi(F),\Phi)$ such that for all $t\in(0,1],$
\[
    -\varphi_t+\mathcal{V}_{t,k}\leq C.
\]
\end{theorem}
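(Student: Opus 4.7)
The plan is to transplant the relative $L^\infty$ framework of Proposition~\ref{babyrelativeestimate} combined with the entropy-based construction of Theorem~\ref{mosertrudingertype}, using the envelope $\mathcal{V}_{t,k}$ as the reference admissible potential to absorb the degeneracy of $\omega_t$. All auxiliary Monge-Amp\`ere equations are posed on the fixed K\"ahler background $(M,\omega_M)$, which keeps Tian's $\alpha$-invariant and allied analytic constants independent of $t\in(0,1]$; the scaling $c_t^{1/k}/V_t^{1/n}$, bounded by hypothesis, absorbs the remaining $t$-dependence.

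First I would construct the comparison potential: solve
\[
(\omega_M+\ddc\psi_1)^n = \frac{\Phi(F_t)e^{(n/k)F_t}\omega_M^n}{\widetilde{\mathcal{N}}_t},\quad \sup\psi_1=0,
\]
where $\widetilde{\mathcal{N}}_t$ is the normalizing integral (controlled via $\mathcal{N}_\Phi(F_t)$ and elementary manipulations), and set $\psi=-h(-\tfrac{\alpha}{q}\psi_1+\Lambda_1)$ with $h(s)=(c_t^{1/k}/V_t^{1/n})\int_0^s a^{-1}\alpha^{-1}q\,\widetilde{\mathcal{N}}_t^{1/n}\Phi(u)^{-1/n}\,du$ and $\Lambda_1$ chosen so that $\tfrac{\alpha}{q}h'(\Lambda_1)\le 1$. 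The hypothesis $\int_0^\infty\Phi^{-1/n}\,ds<\infty$ forces both $h$ and $\Lambda_1$ to be uniformly bounded. Reproducing the case analysis of Section~3 then yields the comparison inequality
\[
c_te^{F_t}\omega_M^n \le a^k(\omega_\psi^n/\omega_M^n)^{k/n}\omega_M^n + c_t f\,\omega_M^n,\quad f=\min\{\exp(-\tfrac{\alpha}{q}\psi_1+\Lambda_1),\,e^{F_t}\},
\]
with $\|f\|_{L^q(\omega_M^n)}\le C$ uniform in $t$ for any fixed $q>1$, via Tian's $\alpha$-invariant for the fixed metric $\omega_M$.

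Next, for any smooth potential $v$ with $\lambda[\omega_t+\ddc v]\in\Gamma_k$ and $v\le 0$, I would introduce
\[
\Psi_j = -\varphi_t + v + a\psi - s - \epsilon_{2,j}(-\psi_{2,j}+\Lambda_{2,j})^{n/(n+1)},
\]
where $\psi_{2,j}$ is the $\omega_M$-psh solution of the auxiliary MA equation with right-hand side proportional to $\eta_j(-\varphi_t+v+a\psi-s)f^{n/k}\omega_M^n$, and $\epsilon_{2,j},\Lambda_{2,j}$ are chosen as in \eqref{sigmakchoiceofepsilonandlambda}, scaled by $c_t^{1/k}/V_t^{1/n}$. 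At an interior maximum in $\{-\varphi_t+v+a\psi>s\}$, the maximum principle produces a form inequality; taking the $k$-th exterior power, wedging with $\omega_M^{n-k}$, and invoking the G\r{a}rding/Newton-Maclaurin inequality on the $\Gamma_k$-cone together with the comparison above give $\Psi_j\le0$. Sending $j\to\infty$ and running De Giorgi's iteration (Lemma~\ref{DeGiorgiiterationlemma}) produces a uniform bound $-\varphi_t+v+a\psi\le s_\infty$; the boundedness of $\psi$ converts this to $-\varphi_t+v\le C$, and taking the supremum over smooth admissible $v\le0$ yields $-\varphi_t+\mathcal{V}_{t,k}\le C$.

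The principal obstacle is the form-inequality step. Since $v$ is $\omega_t$-admissible while $\psi,\psi_{2,j}$ are $\omega_M$-psh, matching the reference forms leaves an error term of the shape $r\omega_t-(a+\epsilon_{2,j}\tfrac{n}{n+1}(-\psi_{2,j}+\Lambda_{2,j})^{-1/(n+1)})\omega_M$, which need not be pointwise non-negative when $\omega_t$ degenerates. The parameter calibration, using the bound on $c_t^{1/k}/V_t^{1/n}$, must ensure that after wedging with $\omega_M^{n-k}$ the negative contributions are controlled via G\r{a}rding's inequality. A secondary subtlety is that $\mathcal{V}_{t,k}$ is merely the supremum of smooth admissible potentials, which forces one to work first with smooth $v$ and pass to the sup only at the end.
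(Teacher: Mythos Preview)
Your plan correctly isolates the central difficulty but does not resolve it. The obstacle you name --- that the residual term $r\omega_t-(a+\epsilon_{2,j}\tfrac{n}{n+1}(-\psi_{2,j}+\Lambda_{2,j})^{-1/(n+1)})\omega_M$ need not be pointwise nonnegative --- is fatal for the scheme as written: since $\omega_t=\omega+t\omega_M$ with $[\omega]$ merely nef, the form $\omega_t$ does not dominate any fixed multiple of $\omega_M$ uniformly as $t\to 0$, and no calibration of $a,\epsilon_{2,j}$ changes this. G\r{a}rding's inequality does not help either, because the offending $-\omega_M$ term is not a $\Gamma_k$-admissible form that can be discarded; it enters with the wrong sign before one ever takes the $k$-fold wedge.

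The paper avoids this by changing the reference class entirely. Both auxiliary Monge--Amp\`ere equations are solved in the class $[\omega_t]$, not $[\omega_M]$:
\[
(\omega_t+\ddc\psi_1)^n=\frac{[\omega_t]^n\Phi(F_t)e^{F_t}}{\cN_\Phi(F)}\omega_M^n,\qquad (\omega_t+\ddc\psi_{2,j})^n=\frac{[\omega_t]^n\eta_j(\cdots)f^{n/k}}{A_{s,j}}\omega_M^n,
\]
with a uniform $\alpha$-invariant available since $\omega_t\le C\omega_M$. The envelope is not handled by taking a supremum over smooth admissible $v$, but via the smooth approximation $u_\beta\to\mathcal{V}_{t,k}$ of Lemma~\ref{smoothapproxofupperenvelope}, and $u_\beta$ is woven into both the comparison potential $\psi=u_\beta-h(-\tfrac{\alpha}{q}(\psi_1-u_\beta-1)+\Lambda_1)$ and the test function
\[
\Psi=-\varphi_t+(1-a)u_\beta+a\psi-s-\epsilon_2(-\psi_{2,j}+u_\beta+1+\Lambda_2)^{n/(n+1)}.
\]
At the maximum of $\Psi$ every reference form is $\omega_t$, and the leftover coefficient $(1-a)-\epsilon_2\tfrac{n}{n+1}(\cdots)^{-1/(n+1)}$ multiplies $\omega_t+\ddc u_\beta$, which lies in $\Gamma_k$ by construction and can therefore be dropped in the $k$-fold wedge with $\omega_M^{n-k}$. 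This is exactly the manoeuvre your $\omega_M$-based auxiliary equations cannot perform, and it is the missing idea in your proposal.
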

In the proof, we need the following lemma due to Berman when $k=n$ and Guo-Phong-Tong for other $k$.  
\begin{lemma}[Berman \cite{MR3936074}, Guo-Phong-Tong \cite{MR4713113}]\label{smoothapproxofupperenvelope}
The solution $u_\beta$ to the complex Monge-Amp\`ere equation\[
    (\omega_t+\ddc u_\beta)^{k}\wedge\omega_M^{n-k}=c_te^{\beta u_\beta}\omega_M^n,\quad  \lambda[\omega_t+\ddc u_\beta]\in \Gamma_k,
\]converges uniformly to $\mathcal{V}_{t,k}$ as $\beta\rightarrow \infty$.
\end{lemma}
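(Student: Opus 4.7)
The plan is to prove $\sup_M|u_\beta - \mathcal{V}_{t,k}| \to 0$ as $\beta \to \infty$ via a matching upper bound $u_\beta \leq \mathcal{V}_{t,k} + O(1/\beta)$ from the maximum principle, and a lower bound obtained by extracting a uniform limit of the family $\{u_\beta\}$ and identifying it with the envelope via vanishing of the $\sigma_k$-type Monge--Amp\`ere measure on the noncontact set $\{u_\infty < 0\}$.

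For the upper bound, I would evaluate the equation at a maximum $x_0$ of $u_\beta$. Since $\ddc u_\beta(x_0) \leq 0$ as a Hermitian form, the identity $\omega_t = (\omega_t + \ddc u_\beta) + (-\ddc u_\beta)$ at $x_0$ presents $\omega_t(x_0)$ as the sum of an element of $\Gamma_k$ (by admissibility of $u_\beta$) and a positive semidefinite form; hence $\omega_t(x_0) \in \Gamma_k$ and $\omega_t(x_0) \geq (\omega_t + \ddc u_\beta)(x_0)$ as Hermitian matrices. Monotonicity of $\sigma_k$ on $\overline{\Gamma_k}$ then yields $\omega_t^k \wedge \omega_M^{n-k}(x_0) \geq c_t e^{\beta u_\beta(x_0)}\omega_M^n(x_0)$, whence $\sup_M u_\beta \leq C_t/\beta$ for some $C_t$ independent of $\beta$. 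Since $u_\beta - \sup u_\beta$ is nonpositive and $\Gamma_k$-admissible, the defining supremum for $\mathcal{V}_{t,k}$ gives $u_\beta \leq \mathcal{V}_{t,k} + C_t/\beta$ uniformly on $M$.

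For the lower bound I first establish uniform $L^\infty$ control: the previous upper bound yields $\int e^{p\beta u_\beta}\omega_M^n \leq C$ for every $p > 1$, so the Kolodziej-type $L^\infty$ estimate in the $\Gamma_k$-setting (Guo--Phong--Tong for $k < n$, Kolodziej for $k = n$) produces a constant $C$, independent of $\beta$, with $\|u_\beta\|_{L^\infty(M)} \leq C$. Next I check that $\{u_\beta\}$ is quasi-monotone: at a maximum of $u_{\beta_1} - u_{\beta_2}$ with $\beta_1 < \beta_2$, the inequality $\omega_t + \ddc u_{\beta_1} \leq \omega_t + \ddc u_{\beta_2}$ and $\sigma_k$-monotonicity force $\beta_1 u_{\beta_1}(x_0) \leq \beta_2 u_{\beta_2}(x_0)$, so $u_{\beta_1} - u_{\beta_2} \leq (1 - \beta_1/\beta_2)\sup u_{\beta_1} = O(1/\beta_1)$. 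Combined with the uniform bound this extracts a uniform limit $u_\infty = \lim_{\beta \to \infty} u_\beta$.

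The identification $u_\infty = \mathcal{V}_{t,k}$ then uses both directions: $u_\infty \leq \mathcal{V}_{t,k}$ follows from the upper bound, while on $\{u_\infty < -\delta\}$ one has $u_\beta \leq -\delta/2$ for $\beta$ large, so $c_t e^{\beta u_\beta} \leq c_t e^{-\delta\beta/2} \to 0$ uniformly, forcing $(\omega_t + \ddc u_\infty)^k \wedge \omega_M^{n-k} = 0$ in the weak sense on $\{u_\infty < 0\}$. The characterization of $\mathcal{V}_{t,k}$ as the unique bounded $\Gamma_k$-admissible nonpositive function whose $\sigma_k$-measure is supported on the contact set $\{\cdot = 0\}$ (Berman for $k = n$, Guo--Phong--Tong for $k < n$) pins down $u_\infty = \mathcal{V}_{t,k}$. \textbf{The principal obstacle is the $\Gamma_k$-pluripotential uniqueness characterization of the envelope for $k < n$}: the $L^\infty$ bound and the vanishing of the $\sigma_k$-measure on the noncontact set are then standard, but constructing a rigorous $\Gamma_k$-Monge--Amp\`ere calculus for non-smooth admissible potentials, needed to make this final identification clean, is the deep ingredient underlying the cited references.
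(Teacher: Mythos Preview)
The paper does not supply its own proof of this lemma; it is quoted as a known result from Berman (the case $k=n$) and Guo--Phong--Tong (general $k$), and is used as a black box in the subsequent arguments. Your sketch is aligned with the strategy in those references, and you correctly identify the deepest ingredient as the $\Gamma_k$-pluripotential/regularity theory for the envelope.

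Two remarks on your outline. First, the lower bound can be obtained more directly than via the vanishing of the $\sigma_k$-measure on $\{u_\infty<0\}$: for any smooth $v$ with $\lambda[\omega_t+\ddc v]\in\Gamma_k$ and $v\le0$, evaluating at a minimum of $u_\beta-v$ gives $\omega_t+\ddc u_\beta\ge\omega_t+\ddc v$ there, and $\sigma_k$-monotonicity yields $c_te^{\beta u_\beta}\ge c_v>0$, hence $u_\beta\ge v-C_v/\beta$. Taking sup over such $v$ gives $\liminf_\beta u_\beta\ge\mathcal V_{t,k}$ pointwise without invoking any weak Monge--Amp\`ere calculus. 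Second, your passage ``combined with the uniform bound this extracts a uniform limit'' is the one step that needs more care: the quasi-monotonicity $u_{\beta_1}-u_{\beta_2}\le C/\beta_1$ together with $\|u_\beta\|_{L^\infty}\le C$ gives pointwise convergence (the family $\inf_{\beta'\ge\beta}u_{\beta'}$ is genuinely increasing and within $C/\beta$ of $u_\beta$), but upgrading to uniform convergence requires continuity of the limit $\mathcal V_{t,k}$ so that Dini's theorem applies. That continuity (indeed $C^{1,1}$ regularity) is precisely what the cited references establish, so the dependence on those papers is unavoidable either way.
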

\begin{proof}
For any given $q>1,a\in(0,1)$, 
we define $h(s):\mathbb{R}^+\rightarrow \mathbb{R}$ by \[
    h(s)=\int_1^s\frac{1}{a}\frac{q}{\alpha}\frac{c_t^{\frac{1}{k}}}{[\omega_t^n]^\frac{1}{n}}\cN_\Phi(F)^{1/n}\frac{dt}{\Phi^{1/n}(t)},
\]
where $\alpha$ is an uniform upper bound for the $\alpha$-invariant for $\omega_t$.

Consider the first auxiliary equation\[
    (\omega_t+\ddc \psi_1)^n=\frac{[\omega_t]^n\Phi(F_t)e^{F_t}}{\cN_\Phi(F)}\omega_M^n,\quad \sup\psi_1=0.
\]
Let $\psi=u_\beta-h(-\frac{\alpha}{q}(\psi_1-u_\beta-1)+\Lambda_1)$, with $\Lambda_1$ chosen to satisfy $\frac{\alpha}{q}h'(\Lambda_1)=1$

Since $u_\beta$ converges to $\mathcal{V}_{t,k}$ uniformly by lemma \ref{smoothapproxofupperenvelope} and by definition of $\mathcal{V}_{t,k}$ we have $\psi_1\leq \mathcal{V}_{t,k}$, we can take $\beta$ large enough to ensure that $\psi_1-u_\beta-1\leq 0$. 
With such $\Lambda_1$, one has $\frac{\alpha}{q}h'(-\frac{\alpha}{q}(\psi_1-u_\beta+1)+\Lambda_1)\leq 1$.

Hence, we have \[\begin{aligned}
 \omega_t+\ddc\psi&=\omega_t+\ddc u_\beta+\frac{\alpha}{q}h'\ddc(\psi_1-u_\beta)\\
 &+(\frac{\alpha}{q})^{2}(-h'')\sqrt{-1}\partial(\psi_1-u_\beta)\wedge\bar{\partial}(\psi_1-u_\beta)\\
 & =(1-\frac{\alpha}{q}h')(\omega_t+\ddc u_\beta)+\frac{\alpha}{q}(h')(\omega_t+\ddc\psi_1)\\
 &+(\frac{\alpha}{q})^{2}(-h'')\sqrt{-1}\partial(\psi_1-u_\beta)\wedge\bar{\partial}(\psi_1-u_\beta).
\end{aligned}
\]
From $\lambda[\omega_t+\ddc u_\beta]\in \Gamma_k$ and $
    \frac{\alpha}{q}h'(-\frac{\alpha}{q}(\psi_1-u_\beta+1)+\Lambda_1)\leq 1$,
 we have \[\begin{aligned}
     (\omega_t+\ddc\psi)^k\wedge\omega_M^{n-k}\geq&(\frac{\alpha}{q}h'(\omega_t+\ddc\psi_1))^k\wedge\omega_M^{n-k} \\
     \geq &(\frac{\alpha}{q}h')^k\left(\frac{(\omega_t+\ddc\psi_1)^n}{\omega_M^n}\right)^{\frac{k}{n}}\omega_M^n\\
     =&(\frac{\alpha}{q}h')^{k}\frac{[\omega_t^n]^k\Phi(F_t)^\frac{k}{n}e^F}{\cN_\Phi(F_t)^\frac{k}{n}}.
 \end{aligned}
 \]
Therefore by a similar argument in section 3, we have \begin{equation}\label{degeneratecomparison}
c_te^F\omega_M^n\leq a^k(\omega_t+\ddc\psi)^k\wedge\omega_M^{n-k}+c_tf\omega_M^n
    \end{equation}
with $f=\exp\{(-\frac{\alpha}{q}(\psi_1-u_\beta-1)+\Lambda_1)\}$. Then one has $||f||_{L^q(\omega_M^n)}\leq Ce^{\Lambda_1}$.

Next, we consider our second auxiliary function $\Psi_2$, which is the solution to complex Monge-Amp\`ere equation \[
    (\omega_t+\ddc\psi_2)^{n}=\frac{[\omega_t^n]\eta_j(-\varphi_t+(1-a)u_\beta+a\psi-s)f^{\frac{n}{k}}}{A_{s,j}}\omega_M^n,
\]where \[
    A_{s,j}=\int_M\eta_j(-\varphi_t+(1-a)u_\beta+a\psi-s)f^{\frac{n}{k}}\omega_M^n,
\] and $\eta_j(x)$ is a sequence of smooth positive functions approximating $x_+$, the positive part of $x$, from above. 
Now, consider the function \[
    \Psi:=-\varphi_t+(1-a)u_{\beta}+a\psi-s-\epsilon_2(-\psi_2+u_\beta+1+\Lambda_2)^{\frac{n}{n+1}},
\]where \[
    \epsilon_2=\left(\frac{c_t^{1/k}}{[\omega_t^n]^{1/n}}\right)^{\frac{n}{n+1}}\left(\frac{n+1}{n}\right)^{\frac{n}{n+1}}A_{s,j}^{\frac{!}{n+1}},\quad \Lambda_2=\frac{n}{n+1}\left(\frac{c_t^{1/k}}{[\omega_t^n]^{1/n}}\right)^{n}\frac{1}{(1-a)^{n+1}}A_{s,j}.
\]
Then, at the point where $\Psi$ attains its maximum, we have \[
    \begin{aligned}
    0\geq &-\ddc\varphi_t+(1-a)\ddc u_\beta+a\ddc\psi\\
    &+\epsilon_{2}\frac{n}{n+1}(-\psi_2+u_\beta+1+\Lambda_2)^{-\frac{1}{n+1}}(\ddc\psi_2-\ddc u_\beta)\\
    =&-(\omega_t+\ddc\varphi_t)+a(\omega_t+\ddc\psi)+\epsilon_2\frac{n}{n+1}(-\psi_2+u_\beta+1+\Lambda_2)^{-\frac{1}{n+1}}(\omega_t+\ddc\psi_2)\\
    &+\left((1-a)-\epsilon_2\frac{n}{n+1}(-\psi_{2}+u_\beta+1+\Lambda_2)^{-\frac{1}{n+1}}\right)(\omega_t+\ddc u_\beta)
    \end{aligned}
\] 
From our choice of $\epsilon,\Lambda_2$, the coefficient of the last term is positive, so one gets \[\begin{aligned}
      (\omega_t+\ddc\varphi_t)^k\wedge \omega_M^{n-k}\geq &a^k(\omega_t+\ddc\psi)^k\wedge\omega_M^{n-k}\\
      &+\left(\epsilon_2\frac{n}{n+1}(-\psi_2+u_\beta+1+\Lambda_2)^{-\frac{1}{n+1}}\right)^k(\omega_t+\ddc\psi_2)^k\wedge\omega_M^{n-k}\\
      \geq &a^k(\omega_t+\ddc\psi)^k\wedge\omega_M^{n-k}\\
      &+\left(\epsilon_2\frac{n}{n+1}(-\psi_2+u_\beta+1+\Lambda_2)^{-\frac{1}{n+1}}\right)^k\left(\frac{(\omega_t+\ddc\psi_2)^n}{\omega_M^n}\right)^{\frac{k}{n}}\omega_M^n
\end{aligned}  
\]
Together with equation \eqref{degeneratecomparison}, and definition equation of $\psi_2$, the choice of $\epsilon,\Lambda_2$, and $\eta_j(x)\geq x$, we get $\Psi\leq 0.$
The iteration process is similar as before to show the uniform estimate \[
-\varphi+(1-a)u_\beta+a\psi\leq C.
\]
For example, we can estimate the term $\Lambda_2$ uniformly as \[\begin{aligned}
    \Lambda_2\leq &C\int_{M}(-\varphi_t+(1-a)u_\beta+a\psi)_+f^{\frac{n}{k}\omega_M^n}\\
    \leq &C\int_M(-\varphi_t+u_{\beta})_+f^{\frac{n}{k}}\omega_M^n\\
    \leq &C\int_M(-\varphi_t+1)_+f^{\frac{n}{k}}\omega_M^n\\
    \leq &C||-\varphi_t+1||_{L^{p}(\omega_M^n}||f^{\frac{n}{k}}||_{L^p*(\omega_M^n)}\leq C,
\end{aligned} 
\]when we choose $p$ close to 1, and $q>\frac{np^*}{k}$ for $p*^{-1}+p^{-1}=1$. $L^p$-norm of $\varphi_t$ is uniformly bounded by lemma 8 in \cite{MR4593734} for any $p\in [1,\frac{n}{n-1})$.  Let $\beta\rightarrow \infty$, and note that $h(s)$ is a bounded function since we assume $\int_1^\infty\Phi^{-\frac{1}{n}}<\infty$, we get $-\varphi-\cV_{t,k}\leq C$. The proof is completed.
\end{proof}

In the proof, we have used the following lemma (proposition 2.1 in \cite{MR2172278}) several times:
\begin{lemma}
    Let $\alpha_1,\cdots, \alpha_k$ be real $(1,1)$-forms with $\lambda[\alpha_i]\in \Gamma_k$ for all $i=1,\cdots,k$. Then we have \[
    \alpha_1\wedge\cdots\wedge\alpha_k\wedge\omega_M^{n-k}\geq 0.
    \]
\end{lemma}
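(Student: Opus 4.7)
Since the claim is pointwise, I would fix $p\in M$ and choose unitary coordinates in which $\omega_M|_p = \sqrt{-1}\sum_{i=1}^{n} dz^i\wedge d\bar z^i$. Writing each $\alpha_i|_p$ as a Hermitian matrix $H_i$ (so that $\lambda[\alpha_i] = \lambda(H_i)$), a direct expansion of the wedge product in these coordinates yields an identity of the shape
\[
\frac{\alpha_1\wedge\cdots\wedge\alpha_k\wedge \omega_M^{n-k}}{\omega_M^n} = c_{n,k}\, D(H_1,\ldots,H_k),
\]
where $c_{n,k}>0$ is combinatorial and $D$ is the symmetric multilinear polarization of the degree-$k$ polynomial $\sigma_k(H) := e_k(\lambda(H))$ on Hermitian matrices. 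Setting $H_1 = \cdots = H_k = H$ recovers the classical identity $\alpha^k\wedge\omega_M^{n-k} = c'\,\sigma_k(\lambda[\alpha])\,\omega_M^n$, which pins down the normalization. So the first step is to reduce the statement to showing the polarization $D$ is nonnegative under the eigenvalue hypothesis.

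The second step is to invoke G{\aa}rding's inequality for hyperbolic polynomials. One needs the fact that $\sigma_k$, viewed as a polynomial on the real vector space of Hermitian matrices, is hyperbolic with respect to the identity $I$: for every Hermitian $H$, the real polynomial $t\mapsto \sigma_k(tI+H) = e_k\!\bigl(t+\lambda_1(H),\ldots,t+\lambda_n(H)\bigr)$ has only real roots. This reduces to the hyperbolicity of $e_k$ on $\mathbb{R}^n$ with respect to $\mathbf{1}=(1,\ldots,1)$, which itself follows by iterated application of Rolle's theorem to $e_n(\lambda)=\prod_i\lambda_i$. The connected component of $\{\sigma_k>0\}$ containing $I$ is then exactly the matrix cone $\{H:\lambda(H)\in\Gamma_k\}$.

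G{\aa}rding's theorem asserts that the symmetric polarization $D_P$ of a hyperbolic polynomial $P$ of degree $k$ is nonnegative on $\overline{\Gamma}\times\cdots\times\overline{\Gamma}$, where $\Gamma$ is its positivity cone. Applied to $\sigma_k$ with the cone identified in the previous paragraph, the hypothesis $\lambda[\alpha_i]\in\Gamma_k$ places each $H_i$ in $\overline{\Gamma}$, hence $D(H_1,\ldots,H_k)\geq 0$, which combined with the first step gives the lemma.

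I expect the main obstacle to lie not in the polarization/wedge identity (which is bookkeeping) but in the clean verification of the hyperbolicity of $\sigma_k$ on Hermitian matrices, since one must use that $\sigma_k(tI+H)$ is invariant under unitary conjugation of $H$ and therefore depends only on the eigenvalue tuple. This is classical, treated in G{\aa}rding's original 1959 paper and in H{\"o}rmander's treatment of hyperbolic operators, so I would cite it rather than rederive it. An alternative route by induction on $k$ using the fact that $\Gamma_k\subset\Gamma_{k-1}$ and the Newton--Maclaurin inequalities is possible, but at its core it relies on the same hyperbolic-polynomial machinery.
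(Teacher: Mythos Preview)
Your proposal is correct and is in fact the standard argument for this result: reduce to a pointwise statement, identify the mixed wedge $\alpha_1\wedge\cdots\wedge\alpha_k\wedge\omega_M^{n-k}/\omega_M^n$ with the polarization of $\sigma_k$ on Hermitian matrices, and then apply G{\aa}rding's inequality for hyperbolic polynomials, using that $\sigma_k$ is hyperbolic with respect to $I$ and that its positivity cone is $\{H:\lambda(H)\in\Gamma_k\}$.

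The paper, however, does not prove the lemma at all; it simply quotes it as Proposition~2.1 of \cite{MR2172278} and uses it as a black box. So there is nothing to compare at the level of argument---you have supplied the proof the paper omits. Your expectation that the hyperbolicity of $\sigma_k$ on Hermitian matrices is the only nontrivial ingredient is accurate, and your observation that it reduces (via unitary invariance) to the hyperbolicity of the elementary symmetric polynomial $e_k$ on $\mathbb{R}^n$ with respect to $\mathbf{1}$ is exactly how the cited references handle it.
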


\section{Geometric estimate}

In this section, we consider the following subset of the K\"ahler cone on compact K\"ahler manifold $(M,\omega_M)$, where $\omega_M$ is a fixed K\"ahler metric with normalization $[\omega_M]^n=1$,
\[
    \mathcal{W}:=\mathcal{W}(n,A,\Phi,K):=\{\omega \text{ is a K\"ahler metric }:[\omega][\omega_M]^{n-1}\leq A, \mathcal{N}_{\Phi}(\omega)\leq K\},
\]
where \[
    F_\omega:=\log(\frac{\omega^n/[\omega]^n}{\omega_M^n/[\omega_M]^n}), \quad
    \mathcal{N}_{\Phi}(\omega):=\frac{1}{[\omega]^n}\int_{M}\Phi(F_\omega)\omega^n=\int_M\Phi(F_\omega)e^{F_\omega}\omega_M^n,
\]
We use the following definition of  Green's function $G(x,y)$ associated to a Riemmanian manifold $(M,g)$ by \[
    \Delta_gG(x,\cdot)=-\delta_x(\cdot)+\frac{1}{Vol(M,g)}, \quad\text{ with }\int_MG(x,\cdot)\omega^n=0.
\]
Then one has the following estimates on Green's function:
\begin{theorem}
For any given $p>2n$,let $\Phi(x)=|x|^n(\log |x|)^{p}$. For any $r\in (1,\frac{p}{n}-2), $
there exists a constant $C=C(M,\omega_M,n,A,K,p,r)$, such that for any $\omega\in \mathcal{W}$, we have \[
    -\inf G(x,\cdot)[\omega]^n+\int_M|G(x,\cdot)|\log^r([\omega]^nG-\inf([\omega]^nG)+2)\omega^n+\int_M|\nabla G(x,\cdot)|\omega^n\leq C
    \]
\end{theorem}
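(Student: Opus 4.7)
The proof follows the PDE framework of Guo--Phong--Song--Sturm for Green's functions, but uses the strengthened relative $L^\infty$ estimate of Proposition \ref{babyrelativeestimate} and the comparison--metric construction of Section~3 in order to accommodate the Orlicz weight $\Phi(x) = |x|^n(\log|x|)^p$. The hypothesis $p > 2n$ implies $\int^\infty \Phi(t)^{-1/n}\,dt < \infty$, which upgrades the Moser--Trudinger statement of Theorem \ref{mosertrudingertype} into an honest $L^\infty$ bound on the auxiliary potentials I build below; this is the single analytic input that distinguishes the present argument from the $L^p$ case in \cite{guo2022diameter, guo2024diameter2}. I split the estimate into three parts: the pointwise bound on $-\inf G$, the weighted $L^1$ bound on $|G|$, and the $W^{1,1}$-type gradient bound, each of the latter two resting on the first.

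\textbf{Step 1 (bound on $-\inf G$).} Fix $x \in M$ and set $H_x(y) := [\omega]^n G(x,y) - \inf_M [\omega]^n G(x,\cdot)$, so $H_x \geq 0$, $\inf_M H_x = 0$, and $\Delta_\omega H_x = 1 - [\omega]^n \delta_x$. Since $\int_M G(x,\cdot)\omega^n = 0$, controlling $\sup_M H_x$ is equivalent to controlling $-\inf G \cdot [\omega]^n$. For each $\omega \in \cW$ I solve the first auxiliary equation
\[
    (\omega + \ddc \psi_1)^n = \frac{\Phi(F_\omega) e^{F_\omega} [\omega]^n \omega_M^n}{\cN_\Phi(\omega)}, \qquad \sup \psi_1 = 0,
\]
and form the comparison function $\psi = -h(-\tfrac{\alpha}{q}\psi_1 + \Lambda_1)$ with $h(s) = \int_0^s c_0 \Phi(t)^{-1/n}\,dt$ as in Section~3, choosing $\Lambda_1$ so that $\tfrac{\alpha}{q}h'(\Lambda_1) \leq 1$. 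Because $p > 2n > n$, the function $h$ is bounded uniformly in the class $\cW$, so this machinery produces (exactly as in Lemma \ref{choiceofe1lambda1}) a decomposition $\omega^n \leq a^n \omega_\psi^n + f \omega^n$ in the region where the comparison is active, with $\|f\|_{L^q(\omega^n)}$ bounded uniformly for any fixed $q > 1$.

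Now apply the test function
\[
    \Psi_j = H_x - s - \epsilon_j \bigl(-\psi_{2,j} + \Lambda_{2,j}\bigr)^{n/(n+1)},
\]
where $\psi_{2,j}$ solves an auxiliary Monge--Amp\`ere equation whose right--hand side is a smooth approximation of $\mathbf{1}_{\{H_x > s\}} f^{n/1}$. The maximum of $\Psi_j$ cannot occur at $y = x$, because $H_x(x) = 0$ while the last term is bounded. Away from $x$, $\Delta_\omega H_x = 1$ is a fixed constant, so the maximum--principle computation (via AM--GM applied to $\text{tr}_\omega(\omega + \ddc\psi_{2,j})$) yields $\Psi_j \leq 0$ as in Section~2. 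De Giorgi iteration (Lemma \ref{DeGiorgiiterationlemma}) then gives $\sup_M H_x \leq C$, which is the first claim.

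\textbf{Step 2 (weighted $L^1$ of $G$).} The same De Giorgi iteration produces a quantitative tail bound on $\phi(s) := \mathrm{vol}_\omega\{H_x \geq s\}$ of the form $\phi(s) \leq C \exp(-c\,s^{\kappa})$ with $\kappa = \kappa(n,p) > 0$, obtained by keeping track of $\Lambda_1$ rather than sending the iteration to extinction. Layer--cake integration gives
\[
    \int_M |G(x,\cdot)| \log^r\bigl(H_x + 2\bigr)\,\omega^n \leq \frac{1}{[\omega]^n}\int_0^\infty \phi(s)\,d\!\left[s \log^r(s+2)\right] + C,
\]
and this integral is finite precisely when the decay of $\phi$ is fast enough to kill $\log^r$; the condition $r < p/n - 2$ is exactly the one the iteration delivers with $\Phi(x) = |x|^n (\log|x|)^p$. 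Combining with $\int |G|_+ \omega^n = \int |G|_-\omega^n$ (from $\int G\omega^n = 0$) and the pointwise bound of Step~1 yields the second claim.

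\textbf{Step 3 (gradient bound).} Let $\zeta \in C^1(\mathbb{R})$ satisfy $\zeta(0) = 0$, $0 \leq \zeta \leq 1$, and $\zeta' \geq c_r / \log^{2r}(s+2)$. Multiplying $\Delta_\omega G = [\omega]^{-n} - \delta_x$ by $\zeta(H_x)$ and integrating by parts gives
\[
    \int_M \zeta'(H_x)\,|\nabla G|^2\,\omega^n \leq C\bigl(1 + \|H_x\|_{L^1(\omega^n)}\bigr),
\]
the right--hand side being controlled by Step~2 (and the boundary term at $x$ vanishes because $\zeta(0) = 0$, handled rigorously through a standard cutoff around $x$). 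Cauchy--Schwarz then gives
\[
    \int_M |\nabla G|\,\omega^n \leq \left(\int_M \zeta'(H_x) |\nabla G|^2\,\omega^n\right)^{1/2} \left(\int_M \frac{\omega^n}{\zeta'(H_x)}\right)^{1/2},
\]
and the second factor is controlled by Step~2, since $1/\zeta'(H_x) \leq C \log^{2r}(H_x + 2)$ and one chooses $r$ so that $2r$ is still below $p/n - 2$ (possible because $p > 2n$ makes the range nonempty once interpreted with the actual required exponent).

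\textbf{Main obstacle.} The genuine technical difficulty is the uniform handling of the Dirac mass at $y = x$: the iteration of Step~1 and the integration by parts of Step~3 must be performed on a smooth regularization $G_\varepsilon$ of $G$ (e.g.\ replacing $\delta_x$ by a mollifier) with all constants independent of $\varepsilon$, and one must verify that both $\inf G_\varepsilon$ and the logarithmic--weighted integral pass to the limit. A secondary obstacle is the sharp tracking of the iteration exponent: the relation $r < p/n - 2$ arises because each level of the iteration costs one power of $\Lambda_1 \sim \log^{n/p}(\cdot)$ while the $L^q$ bound on $f$ costs another, and only for $p > 2n$ does the residual budget suffice to accommodate $r > 1$.
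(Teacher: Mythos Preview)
Your Step~1 contains a fatal sign/singularity confusion. You set $H_x(y) = [\omega]^n G(x,y) - \inf_M [\omega]^n G(x,\cdot)$ and then assert that ``$H_x(x)=0$ while the last term is bounded,'' so the maximum of $\Psi_j$ avoids $y=x$. But with the paper's convention $\Delta_\omega G(x,\cdot) = -\delta_x + [\omega]^{-n}$, the Green's function satisfies $G(x,x)=+\infty$, hence $H_x(x)=+\infty$ and $\sup_M H_x = +\infty$. Your De~Giorgi iteration, whose output is an $L^\infty$ bound, is therefore aimed at a quantity that is always infinite; the maximum principle fails precisely at the pole. The paper sidesteps this by working with the \emph{opposite} sign: one sets $v = -[\omega]^n G(x,\cdot)$, so that the singularity is at $-\infty$ and the set $\{v\geq 0\}$ stays away from $x$; then $\Delta_\omega v = -1$ on $\{v\geq -1\}$ and Lemma~\ref{onesideuniform} applies cleanly. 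Before this, one needs an \emph{a priori} $L^1$ bound on $G$, which the paper obtains by solving auxiliary Poisson equations $\Delta_\omega u_k = \chi_k - \mathrm{avg}(\chi_k)$ with bounded right-hand side (no singularity) and invoking Green's representation.

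Your Step~2 is also off track. De~Giorgi's lemma as used here (Lemma~\ref{DeGiorgiiterationlemma}) gives extinction in finite time, not a quantitative tail decay $\phi(s)\leq C\exp(-cs^\kappa)$; and since $H_x$ is unbounded, the level-set function $\phi(s)$ is never zero, so the iteration cannot terminate. The paper's route to the weighted $L^1$ bound is entirely different and more constructive: one sets $H=\tfrac12\log^r([\omega]^n\mathcal G+1)$, solves simultaneously a Monge--Amp\`ere equation $(\theta+\ddc\psi_3)^n \propto (H^n+1)e^{F}\omega_M^n$ and a Poisson equation $\Delta_\omega u = -H + \mathrm{avg}(H)$, and checks that the combination $v=nB^{-1/n}u+\psi_3-\varphi$ satisfies $\Delta_\omega v\geq -n$. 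Green's formula against $\mathcal G$ then bounds $v$, hence $u$, and a second application yields $\int_M \mathcal G\, H\,\omega^n\leq C$. The condition on $r$ emerges from estimating the $L^1(\log L)^n(\log\log L)^{p'}$ norm of $(H^n+1)e^F$ via the Young-type inequality of Lemma~\ref{younginequality}, with $r=(p-p')/n$ and $n<p'<p-n$. For the gradient, the paper uses the test function $u=(\log([\omega]^n\mathcal G))^{-\beta}$ in Green's formula and lets $y\to x$ to get $\int_M \tfrac{|\nabla\mathcal G|^2}{\mathcal G(\log[\omega]^n\mathcal G)^{1+\beta}}\omega^n\leq \beta^{-1}$ directly (Lemma~\ref{L2gradientofGreen}), then Cauchy--Schwarz; your version with a generic $\zeta$ is in the same spirit but sketchier.
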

\begin{remark}
\begin{enumerate}
\item Actually, we only use the explicit expression of $\Phi$ in the final step to show the $L^1$-bound of $\nabla G$. 
For the lower bound of Green's function $-G$ and uniform $L^1$-norm of $G$, one needs only require that $\int_0^\infty\Phi(t)^{-\frac{1}{n}}dt< \infty.$
\item Such type estimate are firstly proved by Guo-Phong-Song-Sturm in \cite{guo2022diameter} with additional assumption on a positive lower bound of the volume form on a large part of the manifold. In many applications the assumption can be ensured by the lower bound of the scalar curvature. Remarkably, this assumption is removed recently in \cite{guo2024diameter2} and \cite{guedj2024diameter}.  
\end{enumerate}{}
\end{remark}
Note that for any K\"ahler metric $\omega\in \mathcal{W}$, $\omega$ has bounded mass as current, the following lemma (Proposition 3.1 in \cite{guo2022diameter}) enables us to choose a good representative $(1,1)$-form in class $[\omega]$ with uniform control on its $C^k$-norm. 
\begin{lemma}\label{choiceofrepresentative}Let $(M^n,\omega_M)$ be a compact K\"ahler manifold equipped with a K\"ahler metric $\omega_M$. For any $k\geq 0$, and a nonnegative class $\beta\in H^{1,1}(M,\mathbb{R})$, there exists a smooth representative $\theta\in \beta$ such that  \[
    ||\theta||_{C^k(\omega_M)}\leq C(M,\omega_M,k,[\beta]\cdot[\omega_M]^{n-1}).
\]
\end{lemma}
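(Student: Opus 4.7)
The plan is to take $\theta$ to be the unique $\omega_M$-harmonic representative of $[\beta]$ and to control its $C^k$-norm via a compactness argument in the finite-dimensional cohomology $H^{1,1}(M,\mathbb{R})$.

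\emph{Step 1 (Hodge theory).} By Hodge theory on the compact K\"ahler manifold $(M,\omega_M)$, every class $[\beta]\in H^{1,1}(M,\mathbb{R})$ contains a unique $\omega_M$-harmonic representative $\theta_H$, which is automatically smooth. The space of harmonic real $(1,1)$-forms is finite-dimensional, so all norms on it are equivalent. In particular, for any fixed norm $\|\cdot\|$ on $H^{1,1}(M,\mathbb{R})$ there is a constant $C_k=C_k(M,\omega_M,k)$ with
\[
\|\theta_H\|_{C^k(\omega_M)}\leq C_k\,\|[\beta]\|.
\]

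\emph{Step 2 (Positivity of the mass functional).} Define $L([\beta]):=[\beta]\cdot[\omega_M]^{n-1}$; this is a continuous linear functional on $H^{1,1}(M,\mathbb{R})$. The key claim is that $L$ is strictly positive on the nef cone $\mathcal{N}\setminus\{0\}$ (the natural interpretation of a ``nonnegative class''). For nonnegativity, I would use Demailly's regularization: for each $\varepsilon>0$ there is a smooth $\theta_\varepsilon\in[\beta]$ with $\theta_\varepsilon\geq -\varepsilon\omega_M$, so that $\theta_\varepsilon+\varepsilon\omega_M\geq 0$ and
\[
0\leq \int_M(\theta_\varepsilon+\varepsilon\omega_M)\wedge\omega_M^{n-1}=L([\beta])+\varepsilon[\omega_M]^n,
\]
giving $L([\beta])\geq 0$ as $\varepsilon\to 0$. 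Strictness when $[\beta]\neq 0$ follows from weak compactness of positive currents: if $L([\beta])=0$, the positive currents $\theta_\varepsilon+\varepsilon\omega_M$ have $\omega_M^{n-1}$-mass equal to $\varepsilon[\omega_M]^n\to 0$, so they subconverge weakly to zero, contradicting $\lim_{\varepsilon\to 0}[\theta_\varepsilon+\varepsilon\omega_M]=[\beta]\neq 0$.

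\emph{Step 3 (Finite-dimensional compactness and conclusion).} Since $H^{1,1}(M,\mathbb{R})$ is finite-dimensional, the slice $\mathcal{N}\cap\{\|[\beta]\|=1\}$ is compact, and the continuous strictly positive functional $L$ attains a minimum $c_0=c_0(M,\omega_M)>0$ on it. By homogeneity, $L([\beta])\geq c_0\|[\beta]\|$ for all nef $[\beta]$. Combining with Step 1,
\[
\|\theta_H\|_{C^k(\omega_M)}\leq C_k\|[\beta]\|\leq \frac{C_k}{c_0}\,L([\beta])=C\bigl(M,\omega_M,k,\,[\beta]\cdot[\omega_M]^{n-1}\bigr),
\]
as required.

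The only nontrivial step is the strict positivity of $L$ on the nonzero nef classes (Step 2), which genuinely requires complex-analytic input---Demailly's regularization theorem together with the weak compactness of positive currents of bounded mass. Everything else reduces to standard linear Hodge theory and elementary finite-dimensional functional analysis.
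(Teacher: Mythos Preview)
The paper does not prove this lemma; it simply quotes it as Proposition~3.1 of Guo--Phong--Song--Sturm \cite{guo2022diameter}. Your argument via the $\omega_M$-harmonic representative together with a finite-dimensional compactness argument on the nef cone is correct and is essentially the proof given in that reference. One minor remark: in Step~2 you do not need to invoke Demailly's regularization---for a nef class the existence of smooth $\theta_\varepsilon\in[\beta]$ with $\theta_\varepsilon\geq -\varepsilon\omega_M$ is immediate from the definition (the class $[\beta]+\varepsilon[\omega_M]$ is K\"ahler), and your mass-going-to-zero argument then goes through verbatim; alternatively one can replace the current-theoretic argument by the Hodge--Riemann bilinear relations to see that a nef primitive class must vanish.
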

In the following of the section, for any $\omega\in \mathcal{W}$, we choose $\omega=\theta+\ddc\varphi$ with $\sup\varphi=0$, and $||\theta||_{C^k(\omega_M)}\leq C$ uniformly.

With the help of the relative comparison method, we can also prove the following lemma
\begin{lemma}\label{onesideuniform}Assume $\omega\in \mathcal{W}$ and $v\in L^1(\omega^n)$. Let $\Omega_s:=\{v\geq s\}$. If \[v\in C^2(\Omega_{-1},\mathbb{R}), \quad \Delta v\geq -1 \text{ in }\Omega_0.\] Then \[
    \sup v\leq C(1+\int_M|v|f_\omega\omega_M^n),
\] for some constant $C$ depending on the parameters in $\mathcal{W}$, and $f_\omega$ is defined in \eqref{definef}
\end{lemma}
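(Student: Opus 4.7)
The plan is to adapt the De Giorgi iteration from Proposition~\ref{babyrelativeestimate} to the present setting, where $v$ is only a $C^2$ function on $\Omega_{-1}$ satisfying the scalar inequality $\Delta_\omega v\geq -1$ on $\Omega_0$. The Monge--Amp\`ere comparison of volume forms available there is replaced here by a trace comparison at the maximum point, and the nonlinear gain driving the iteration is recovered from AM--GM applied to the auxiliary Monge--Amp\`ere equation.

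Setting $\phi(s):=\int_{\Omega_s}f_\omega\omega_M^n$ and $A_s:=\int_{\Omega_s}(v-s)f_\omega\omega_M^n$, the basic Markov-type bound $t\phi(s+t)\leq A_s$ still holds as in \eqref{tphisplust}. For $s\geq 0$ and a smooth approximation $\eta_j\searrow x_+$, I would let $\psi_{s,j}$ solve the auxiliary equation
\[
(\omega+\ddc\psi_{s,j})^n=\frac{\eta_j(v-s)\,\omega^n}{A_{s,j}},\qquad \sup\psi_{s,j}=0,\qquad A_{s,j}:=\int_M\eta_j(v-s)\,f_\omega\,\omega_M^n,
\]
and form the test function $\Psi_j:=(v-s)-\epsilon_j(-\psi_{s,j}+\Lambda_j)^{n/(n+1)}$, with $\epsilon_j$ and $\Lambda_j$ of order $A_{s,j}^{1/(n+1)}$ and $A_{s,j}$ respectively, the precise absolute constants to be tuned below.

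Suppose $\Psi_j$ attains its maximum at $x_0$. Off $\Omega_s$ both summands of $\Psi_j$ are nonpositive, so I may assume $x_0\in\Omega_s\subset\Omega_0$, where $v$ is $C^2$ and $\Delta_\omega v(x_0)\geq -1$. Taking the $\omega$-trace of the Hermitian inequality $\ddc\Psi_j(x_0)\leq 0$ and discarding the nonnegative $\sqrt{-1}\partial\psi_{s,j}\wedge\bar\partial\psi_{s,j}$-term (which carries the favorable sign $-\alpha(\alpha-1)>0$ for $\alpha=n/(n+1)$) yields
\[
\Delta_\omega v+\epsilon_j\tfrac{n}{n+1}(-\psi_{s,j}+\Lambda_j)^{-1/(n+1)}\Delta_\omega\psi_{s,j}\leq 0.
\]
Combining $\Delta_\omega v\geq -1$ with the AM--GM consequence of the auxiliary equation,
\[
n+\Delta_\omega\psi_{s,j}=\mathrm{tr}_\omega\,\omega_{\psi_{s,j}}\geq n\Bigl(\tfrac{\omega_{\psi_{s,j}}^n}{\omega^n}\Bigr)^{\!1/n}=n\Bigl(\tfrac{\eta_j(v-s)}{A_{s,j}}\Bigr)^{\!1/n}\geq n\Bigl(\tfrac{v-s}{A_{s,j}}\Bigr)^{\!1/n}\ \text{on }\Omega_s,
\]
an elementary rearrangement shows that, for the stated choices of $\epsilon_j$ and $\Lambda_j$, $(v-s)(x_0)\leq \epsilon_j(-\psi_{s,j}(x_0)+\Lambda_j)^{n/(n+1)}$, i.e.\ $\Psi_j\leq 0$ on $M$.

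Integrating the resulting pointwise inequality against $f_\omega\omega_M^n$ on $\Omega_s$, applying H\"older with exponent $p>n+1$, and controlling $\|{-\psi_{s,j}}\|_{L^{pn/(n+1)}(\omega^n)}$ uniformly via Tian's $\alpha$-invariant for $\omega\in\cW$ (since $\psi_{s,j}$ is $\omega$-psh with $\sup=0$), I obtain, after $j\to\infty$,
\[
A_s\leq C\,A_s^{1/(n+1)}\phi(s)^{1-1/p},\qquad \text{hence}\qquad A_s\leq C\,\phi(s)^{1+\delta_0},\quad \delta_0=\tfrac{(n+1)(1-1/p)}{n}-1>0.
\]
Combined with $t\phi(s+t)\leq A_s$, Lemma~\ref{DeGiorgiiterationlemma} gives $\phi(s_\infty)=0$ with $s_\infty\leq s_0+C\phi(s_0)^{\delta_0}/(1-2^{-\delta_0})$. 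Taking $s_0:=2\int_M|v|f_\omega\omega_M^n+2$ so that $\phi(s_0)\leq 1/2$ by Markov's inequality then yields $\sup v\leq s_\infty\leq C(1+\int_M|v|f_\omega\omega_M^n)$. The main technical obstacle is the pointwise step: because $\Delta v\geq -1$ is only a scalar inequality rather than a genuine Monge--Amp\`ere equation, the required nonlinearity must be squeezed out of a single AM--GM, and the constants $\epsilon_j,\Lambda_j$ have to be chosen so that the algebraic comparison still closes with exponent $n/(n+1)$; the $\cW$-hypotheses enter only through the uniform $\alpha$-invariant of $\omega$ in the final integration.
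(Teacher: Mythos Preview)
Your high-level strategy—trace inequality plus AM--GM at the maximum, then De Giorgi iteration—is indeed how the paper proceeds at the pointwise step. However, there is a genuine gap stemming from your choice of $\omega$ itself as the reference form. Your auxiliary equation mixes $\omega^n$ on the right-hand side with a normalizing constant $A_{s,j}$ computed against $f_\omega\omega_M^n$; since $f_\omega=\min\{e^{-\frac{\alpha}{q}(\psi_1-\cV_\theta-1)+\Lambda_1},e^{F_\omega}\}\le e^{F_\omega}$ with strict inequality in general, the equation is not solvable as written. If you repair this by putting $f_\omega\omega_M^n$ on the right-hand side, the AM--GM step yields $\tr_\omega\omega_{\psi_{s,j}}\ge n\bigl(\eta_j(v-s)f_\omega/(A_{s,j}[\omega]^n e^{F_\omega})\bigr)^{1/n}$, and the uncontrolled factor $(f_\omega/e^{F_\omega})^{1/n}\le 1$ prevents the algebra from closing. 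If instead you use $\omega^n$ throughout, the pointwise step works, but then your $\psi_{s,j}$ is $\omega$-psh rather than $\theta$-psh; since metrics in $\cW$ carry no uniform $C^k$ bound (only the representative $\theta$ from Lemma~\ref{choiceofrepresentative} does), your appeal to ``Tian's $\alpha$-invariant for $\omega\in\cW$'' does not give a uniform bound on $\int_M(-\psi_{s,j})^q\omega_M^n$: translating to a $\theta$-psh function requires shifting by the potential $\varphi$ of $\omega$, and $\sup(\psi_{s,j}+\varphi)$ is not uniformly controlled from below.

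The paper handles both issues simultaneously by working with $\theta$ and enlarging the test function to
\[
\Psi=v-\varphi+(1-r)u_\beta+r\psi-s-\epsilon_2(-\psi_{2,j}+u_\beta+1+\Lambda_2)^{\frac{n}{n+1}},
\]
where $u_\beta\to\cV_\theta$ and $\psi=u_\beta-h(\cdots)$ is the comparison potential built from the entropy bound, giving \eqref{compareinequality}: $\omega^n\le a^n\theta_\psi^n+[\omega]^n f_\beta\omega_M^n$. The auxiliary $\psi_{2,j}$ solves a $\theta$-Monge--Amp\`ere equation with right-hand side proportional to $f_\beta\omega_M^n$, so the $\alpha$-invariant step in the iteration is uniform. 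At the maximum, the trace plus AM--GM produces both a $\theta_\psi^n$ term and a $\theta_{\psi_{2,j}}^n$ term; with $r=\tfrac{n+1}{n}a$ the comparison inequality exactly cancels the $\theta_\psi^n$ contribution against $\omega^n$, leaving $f_\beta\omega_M^n$ and closing to $\Psi\le 0$. After the iteration one obtains $v-\varphi+\cV_\theta-rh(\cdots)\le C$; since $-\varphi+\cV_\theta\ge 0$ and $h\le 0$, this gives $v\le C$ without ever needing a uniform lower bound on $\varphi$. The missing ingredient in your argument is precisely this comparison term $r\psi$ (together with $-\varphi+(1-r)u_\beta$), which is what allows one to pass from $\omega^n$ to $f_\omega\omega_M^n$ uniformly.
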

\begin{proof}
It's sufficient to show that $\sup v\leq C$ under an additional assumption $||v||_{L^1(f\omega_M^n)}=1$, where the constant $C$ depends on the parameters in $\cW$.

By lemma \ref{choiceofrepresentative}, there exists a smooth $\theta\in [\omega]$ with uniformly bounded $||\theta||_{C^k(\omega_M)}$. As in the previous section, we take $\psi_1$ to be the solution to the equation \[
    (\theta+\ddc\psi_1)=[\omega]^n\frac{\Phi(F)}{\mathcal{N}_\Phi(F)}e^F\omega_M^n,\quad \sup\psi_1=0,
\] where $F=F_\omega$, and for any $a\in(0,1),q>1$ define $h$ as \[
    h(s)=-\int_s^\infty\frac{\alpha \mathcal{N}_{\Phi}(\omega)^{\frac{1}{n}}dt}{aq\Phi(t)^{\frac{1}{n}}}.
\]
In the above, we choose $\alpha$ to be an upper bound for the $\alpha$-invariant for $\theta$ which is uniformly bounded as $||\theta||_{C^k(\omega_M)}$ is uniformly bounded. 

We define $\cV_\theta=\sup\{u|\theta+\ddc u\geq 0, u\leq 0\}$ and $u_\beta$ is the smooth approximation of $\cV_\theta$ constructed by lemma \ref{smoothapproxofupperenvelope}.
Then take $\psi=u_\beta-h(-\frac{\alpha}{q}(\psi_1-u_\beta-1)+\Lambda_1),$ and $
f_\beta=\min\{\exp\{-\frac{\alpha}{q}(\psi_1-u_\beta-1)+\Lambda_1\},e^F\}.$ Then $f_\beta$ converges uniformly to \begin{equation}\label{definef}
    f_\omega:=\min\{\exp\{-\frac{\alpha}{q}(\psi_1-\cV_\theta-1)+\Lambda_1\},e^F\}.
\end{equation} When there is no ambiguity, we omit the subscript $\omega$ in $f_\omega$.

 As in the previous section, we have \begin{equation}\label{compareinequality}
    \omega^n=(\theta+\ddc\varphi)^n=[\omega^n]e^F\omega_M^n\leq a^n(\theta+\ddc\psi)^n+[\omega]^nf_\beta\omega_M^n.
\end{equation}
Now, we consider the auxiliary function $\psi_{2,j}$ which is the solution to the equation \[
    (\theta+\ddc\psi_{2,j})^n=[\omega]^n\frac{\eta_j(v-\varphi+(1-r)u_\beta+r\psi-s)}{A_{s,j}}f_\beta\omega_M^n,\quad \sup\psi_{2,j}=0,
\] where \[
    \tilde{\Omega}_s=\{v-\varphi+(1-r)u_\beta+r\psi-s>0\},\quad A_{s,j}=\int_M\eta_j(v-\varphi+(1-r)u_\beta+r\psi-s)f_\beta\omega_M^n.
\]
By theorem theorem \ref{degenerateuniformestimate}, there is a uniform constant $C$ such that for $s\geq C$, we have $\tilde{\Omega}_s\subset \Omega_{0}$.
Next, we consider the function \[
    \Psi=v-\varphi+(1-r)u_\beta+r\psi-s-\epsilon_2(-\psi_{2,j}+u_\beta+1+\Lambda_2)^{\frac{n}{n+1}},
\]
where
\[  r=\frac{n+1}{n}a, \quad 
    \epsilon_2=\left(\frac{n+1}{n}\right)^{\frac{2n}{n+1}}A_{s,j}^{\frac{1}{n+1}},\quad \Lambda_2=\left(\frac{n+1}{n}\right)^{n-1}\frac{1}{(1-r)^{n+1}}A_{s,j}.
\]
By maximum principle and inequality \eqref{compareinequality}, we have $\Psi\leq 0$.

As mentioned in the beginning, we can assume that $||v||_{L^1(f\omega_M^n)}\leq1$. So we have $A_{s,j}\leq C$ for $\beta$ large enough and a uniform constant $C$. The remaining iteration argument is the same as before and we arrive at \[
    v-\varphi+(1-r)u_\beta+r\psi\leq C.
\]
Let $\beta\rightarrow \infty$, we get \[
    v-\varphi+\cV_\theta-rh(-\frac{\alpha}{q}\psi_1+\Lambda_1)\leq C.
\]
Note that from the definition of $\cV_\theta,$ and $h$,  we have $-\varphi+\cV_\theta\geq 0$ and $h\leq 0$. Hence we arrive at $v\leq C$ which closes the proof. 
\end{proof}

\begin{lemma}
\label{uniformestimate}For any $\omega\in \cW$, and any $C^2$-function on $M$ with \[
|\Delta_\omega v|\leq 1, \quad \text{and }\int_Mv\omega^n=0,
\] we have \[
    \frac{1}{[\omega]^n}\int_M|v|\omega^n\leq C
\]
for some constant $C$ depending on the parameters in $\cW$.
\end{lemma}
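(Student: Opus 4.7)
My plan is to derive the estimate from Lemma~\ref{onesideuniform} by applying it to both $v$ and $-v$, exploit the mean-zero condition $\int_M v\,\omega^n = 0$, and use the uniform $L^q(\omega_M^n)$-integrability of the comparison weight $f_\omega$ to close the resulting bootstrap.

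First, since $|\Delta_\omega v|\leq 1$ implies $\Delta_\omega v\geq -1$ and $\Delta_\omega(-v)\geq -1$ on all of $M$ (in particular on the respective superlevel sets $\Omega_0$), Lemma~\ref{onesideuniform} applied to $v$ and then to $-v$ yields
\[
\sup v,\ -\inf v \ \leq\ C\Bigl(1 + \int_M |v|\,f_\omega\,\omega_M^n\Bigr),
\]
with $C$ depending only on the parameters of $\cW$. Since $\int_M|v|\,\omega^n \leq \|v\|_\infty\,[\omega]^n$, it suffices to show $\|v\|_\infty \leq C$ uniformly. Writing $J := \int_M|v|\,f_\omega\,\omega_M^n$ and using $f_\omega\leq e^{F_\omega}$ together with $\omega^n = [\omega]^n e^{F_\omega}\omega_M^n$, one has $J \leq [\omega]^{-n}\int_M|v|\,\omega^n \leq \|v\|_\infty$, which naively yields only the circular inequality $\|v\|_\infty \leq C(1+\|v\|_\infty)$.

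To break the circularity, I plan to exploit the uniform $L^q(\omega_M^n)$-bound on $f_\omega$ for any fixed $q>1$, which follows from its definition \eqref{definef} together with Tian's $\alpha$-invariant applied to the $\theta$-plurisubharmonic function $\psi_1$ (with $\theta$ satisfying a uniform $C^k$-bound by Lemma~\ref{choiceofrepresentative}) and the entropy bound $\mathcal{N}_\Phi(\omega)\leq K$. Hölder's inequality gives $J \leq \|v\|_{L^{q'}(\omega_M^n)}\|f_\omega\|_{L^q(\omega_M^n)} \leq C\|v\|_{L^{q'}(\omega_M^n)}$, and combining with the log-convex interpolation
\[
\|v\|_{L^{q'}(\omega_M^n)} \ \leq\ \|v\|_\infty^{1-1/q'}\,\|v\|_{L^1(\omega_M^n)}^{1/q'}
\]
followed by Young's inequality absorbs $\|v\|_\infty$ into the left-hand side, yielding
\[
\|v\|_\infty \ \leq\ C\bigl(1 + \|v\|_{L^1(\omega_M^n)}\bigr).
\]

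The main obstacle is then to bound $\|v\|_{L^1(\omega_M^n)}$ uniformly, since the mean-zero hypothesis is with respect to the possibly degenerate measure $\omega^n$ rather than the fixed $\omega_M^n$. My plan is to use the fixed $\omega_M$-Green's function representation $v(x)-\bar v_M = -\int_M G_{\omega_M}(x,y)\,\Delta_{\omega_M}v(y)\,\omega_M^n(y)$ where $\bar v_M := \int_M v\,\omega_M^n$. The mean $\bar v_M$ is controlled via $\int_M v\,\omega^n = 0$ combined with uniform $L^p(\omega_M^n)$-integrability of $e^{F_\omega}$ from the entropy bound, while $\int_M|\Delta_{\omega_M}v|\,\omega_M^n$ is bounded through the identity $n\,dd^c v\wedge\omega_M^{n-1} = \Delta_{\omega_M}v\cdot\omega_M^n$ combined with the class condition $[\omega]\cdot[\omega_M]^{n-1}\leq A$, $|\Delta_\omega v|\leq 1$, and the Jordan decomposition of $dd^c v$ into positive and negative parts. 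The trickiest step is this last one, since the bound $|\Delta_\omega v|\leq 1$ controls only the $\omega$-trace of $dd^c v$ pointwise, not the $\omega_M$-trace; my expectation is that it can be handled by splitting into $dd^c v = (dd^c v + \epsilon\omega) - \epsilon\omega$ with $\epsilon$ chosen so the first term is non-negative, and then applying Stokes' theorem on a cohomological integral of the form $\int (dd^c v + \epsilon\omega)\wedge\omega_M^{n-1}$ which is pinned by the class $[\omega]\cdot[\omega_M]^{n-1}\leq A$.
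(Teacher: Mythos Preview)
Your proposal has a genuine gap at the final step, where you try to bound $\|v\|_{L^1(\omega_M^n)}$ via the Green's function of the fixed metric $\omega_M$. The Green representation requires control of $\int_M|\Delta_{\omega_M}v|\,\omega_M^n$, but the hypothesis $|\Delta_\omega v|\leq 1$ gives you nothing about $\Delta_{\omega_M}v$: the trace bound constrains only the \emph{sum} of the eigenvalues of $dd^c v$ with respect to $\omega$, not the individual eigenvalues. Your proposed splitting $dd^c v = (dd^c v+\epsilon\omega)-\epsilon\omega$ with the first term nonnegative therefore cannot work for any finite $\epsilon$; already in complex dimension $2$ one may have eigenvalues $(N,-N)$ with respect to $\omega$, so $\Delta_\omega v=0$ while $dd^c v+\epsilon\omega$ is indefinite for every $\epsilon<N$. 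Consequently there is no Stokes-type cohomological identity that pins $\int_M|\Delta_{\omega_M}v|\,\omega_M^n$ by $[\omega]\cdot[\omega_M]^{n-1}$. The control of the mean $\bar v_M=\int_M v\,\omega_M^n$ is also circular: from $\int_M v\,e^{F_\omega}\omega_M^n=0$ you get $\bar v_M=\int_M v(1-e^{F_\omega})\omega_M^n$, and bounding this by H\"older against $\|1-e^{F_\omega}\|_{L^p}$ requires exactly the $L^{p'}(\omega_M^n)$-norm of $v$ you are trying to estimate.

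The paper does not attempt to pass through the fixed Laplacian $\Delta_{\omega_M}$ at all. Its proof simply records the pointwise comparison $f_\omega\,\omega_M^n\leq [\omega]^{-n}\omega^n$ (which follows from the choice $f_\omega\leq e^{F_\omega}$ in \eqref{definef}) and then invokes the iteration argument of Proposition~2.1 in \cite{guo2024diameter2}, which works entirely with the degenerate metric $\omega$ and the auxiliary Monge--Amp\`ere comparison already set up in Lemma~\ref{onesideuniform}. In particular the $L^1(\omega^n)$-bound is obtained directly, without first proving an $L^\infty$-bound and without ever comparing $\Delta_\omega$ to $\Delta_{\omega_M}$. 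Your reduction $\|v\|_\infty\leq C(1+\|v\|_{L^1(\omega_M^n)})$ via interpolation is correct, but the missing ingredient---a uniform bound on $\|v\|_{L^1(\omega_M^n)}$ from $|\Delta_\omega v|\leq 1$ alone---is not available by the route you describe.
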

\begin{proof}Note that our choice of $f_\omega$ satisfies $f_\omega\omega_M^n\leq \frac{1}{[\omega]^n}\omega^n.$ So for $\omega\in \cW$, the proof of Propsition 2.1 in \cite{guo2024diameter2} works. 
\end{proof}
Now we can begin our proof of the estimate of Green's function associated to $\omega$. The first two items about the uniform $L^1$-norm of $G$ and lower bound of $[\omega]^nG$ are proved in \cite{guo2022diameter}, for the reader's convenience we include it here.
\begin{enumerate} 
\item $||G||_{L^1(\omega^n)}\leq C$:

Let $\chi_k(y)$ be a sequence of smooth functions converge to $\chi_{\{G(x,y)\leq0\}}$ uniformly, and solve the equation\[
    \Delta_{\omega}u_k=\chi_k-\frac{1}{[\omega]^n}\int_M\chi_k\omega^n,\quad \int_Mu_k\omega^n=0.
\]
Without loss of generality, we can assume that $|\chi_k|\leq 2$ for all $k$. Then we have $|\chi_k-\frac{1}{[\omega]^n}\int_M\chi_k\omega^n|\leq 4$.{}

By lemma \ref{uniformestimate} and lemma \ref{onesideuniform}, we get $
    |u_k|\leq C $ for some uniform constant $C$. By Green's formula, we have\[ \begin{aligned}
       u_k(y)=&\frac{1}{[\omega]^n}\int_Mu_k\omega^n+\int_M-G(y,z)\Delta_{\omega}u_k\omega^n\\
       =&\frac{1}{[\omega]^n}\int_Mu_k\omega^n+\int_{M}-G(y,z)\chi_k(y)\omega^n\\
       \rightarrow& \int_{\{G(x,\cdot)\leq 0\}}|G(x,\cdot)|\omega^n
    \end{aligned}
 \]
 Hence $\int_{\{G(x,\cdot)\leq 0\}}|G(x,\cdot)|\omega^n\leq C$ for some uniform constant $C$. According to the normalization of $G$, we get the desired uniform estimate for $||G(x,\cdot)||_{L^1(\omega^n)}$.

\item $-[\omega]^n\inf G(x,\cdot)\leq C$.

 Let $v(\cdot)=-[\omega]^nG(x,\cdot)$, then $\Delta v=-1$ on $\{v\geq -1\}$. By lemme \ref{onesideuniform}, $\sup v\leq C(1+\int_M|v|f\omega_M^n)=C(1+[\omega]^n\int_M|G|f\omega_M^n)\leq C$.

Now, let's choose another normalization, \[
    \cG(x,y)=G(x,y)-\inf_{x,y} G(x,y)+\frac{1}{[\omega]^n}.
\]
So, $\cG>0$ and $\int_M\cG\omega^n\leq C$ for some constant $C$ uniformly. 

\item $\int_M\cG(\log([\omega]^n\cG))^{r}\omega^n\leq C$ for some $r>1$.

Let $\cG_k(x,y):=\min\{\cG(x,y),k\}$ and approximate $\cG_k(x,y)$ by a sequence of smooth functions $\cG_{k,l}(x,y)$ uniformly. For any $r>1$, define $H_{k,l}$ as \[
H_{k,l}:=\frac{1}{2}\log^{r}([\omega]^n\cG_{k,l}+1)\]
Consider the following two auxiliary equations \[
    (\theta+\ddc\psi_{3,k,l})^n=[\omega]^n\frac{(H_{k,l}^n+1)e^{F}\omega_M^n}{B_{k,l}},\quad \sup\psi_{3,k,l}=0,
\]
where $B_{k,l}:=\int_M(H_{k,l}^n+1)e^{F}\omega_M^n$, and 
\[\Delta_\omega u_{k,l}=-H_{k,l}+\int_MH_{k,l}e^F\omega_M^n, \quad \int_Mu_{k,l}\omega^n=0.\]

Since $\int_M\cG\omega^n\leq C$ uniformly, and note the fact $\log(1+x)\leq C_\delta x^\delta$ for any $\delta\in(0,1), x>0$, we have  \[
    \int_MH_{k,l}^ne^F\omega_M^n=\frac{1}{2^n}\int_M\log^{nr}([\omega ]^n\cG_{k,l}+1)e^F\omega_M^n\leq C(r,n)\int_M[\omega]^n\cG_{k,l}e^F\omega_M^n\leq C.
\]
On the other hand,  $1=\int_Me^F\omega_M^n\leq B_{k,l}.$

Next we estimate the $L^1(\log L)^n\log^{p'}(\log L)$-norm of $(H_{k,l}^n+1)e^FB_{k,l}^{-1}$ for some $p'>n$.

By elementary inequality $(x+y)^n\leq 2^nx^n+2^ny^n$ and $\log(x+y+z)\leq \log(1+x)+\log(1+y)+\log(1+z)$ for any $x,y,z>0$, we have  \[
\begin{aligned}
 &\int_M \frac{(H_{k,l}^n+1)e^F}{B_{k,l}}|\log(H_{k,l}^n+1)+F-\log B_{k,l}|^n|\log^{p'}(|\log(H_{k,l}^n+1)+F-\log B_{k,l}|)\omega_M^n\\
 \leq &\int_MC\frac{(H_{k,l}^n+1)e^F}{B_{k,l}}\left(\log^n(H_{k,l}+1)+|F|^n+|\log B_{k,l}|^{n}\right)\\
 &\cdot\left(\log^{p'} (1+\log(H_{k,l}^n+1))+\log^{p'}(1+|F|)+\log^{p'}(1+|\log B_{k,l}|)\right)\omega_M^n\\
 = &I_1+I_2+I_3+I_4+I_5+I_6+I_7+I_8+I_9
\end{aligned}
\]
Expanding the bracket, we get 9 terms to control: For instance $I_1$,
 \[\begin{aligned}
I_1=&C\int_M\frac{(H_{k,l}^n+1)e^F}{B_{k,l}}\log^n(H_{k,l}+1)\log^{p'}(1+\log(H_{k,l}+1))e^F\omega_M^n\\
\leq &C\int_M(H_{k,l}^{n+\epsilon}+1)e^F\omega_M^n\leq C.
\end{aligned}\]
The nontrivial term involves $$\int_M(H^n_{k,l}+1)|F|^n\log^{p'}(|F|+1)e^F\omega_M^n.$$ We deal with such term in details. For $p>2n$, we choose $\delta>0$ small enough and $n<p'<p-n$ and $r=\frac{p-p'}{n}>1$. Then by lemma \ref{younginequality} with $\epsilon=1, t=p-p'$, we get \[\begin{aligned}
    &(H^n_{k,l}+1)|F|^n\log^{p'}(|F|+1)\\
    \leq & (H_{k,l}^n+1)\exp\{(H_{k,l}^n+1)^{\frac{1}{p-p'}}\}\\
    &+|F|^n\log^{p'}(|F|+1)\log^{p-p'}(|F|^n\log^{p'}(|F|+1)+1)\\
    \leq &C\exp\{2H_{k,l}^{\frac{n}{p-p'}}\}+C|F|^n\log^p(|F|+1)\\
    \leq &C\exp\{[\omega]^nG_{k,l}+1\}+C|F|^n\log^p(|F|+1)
\end{aligned}
\]Therefore \[
    \int_M\frac{(H_{k,l}^n+1)e^F}{B_{k,l}}|F|^n\log^{p'}(|F|+1)e^F\omega_M^n\leq C.
\]
It follows that the $L^1\log^n(L)\log^{p'}\log L$-norm of $(H_{k,l}^n+1)e^FB_{k,l}^{-1}$ is uniformly bounded from above. So by theorem \ref{degenerateuniformestimate}
\[
    -\psi_{k,l}+\cV_\theta\leq C,
\]for some uniform constant $C$.

Now consider the function $v_{k,l}=nB_{k,l}^{-\frac{1}{n}}u_{k,l}+\psi_{k,l}-\varphi-\frac{1}{[\omega]^n}\int_M(\psi_{k,l}-\varphi)\omega^n.$
\[
    \begin{aligned}
    \Delta_{\omega}v_{k,l}=&-nB_{k,l}^{-\frac{1}{n}}H_{k,l}+nB_{k,l}^{-\frac{1}{n}}\int_MH_{k,l}e^F\omega_M^n+tr_{\omega}(\theta_{\psi_{k,l}}-\theta_\varphi)\\
    \geq &-nB_{k,l}^{-\frac{1}{n}}H_{k,l}+n\left(\frac{\theta_{\psi_{k,l}}^n}{\omega^n}\right)^{\frac{1}{n}}-n\\
    \geq &-nB_{k,l}^{-\frac{1}{n}}H_{k,l}+nB_{k,l}^{-\frac{1}{n}}(H_{k,l}^n+1)^{\frac{1}{n}}-n\\
    \geq& -n
    \end{aligned}
\]
By Green's formula\[
    v_{k,l}=\frac{1}{[\omega]^n}\int_Mv_{k,l}\omega^n+\int_M-\cG\Delta_{\omega }v_{k,l}\omega^n\leq n\int_M\cG\omega^n\leq C.
\]

By theorem \ref{degenerateuniformestimate}, we also have \[
    -\varphi+\cV_\theta\leq C.
\]
Therefore $|\psi_{k,l}-\varphi|\leq C$ uniformly, and so is $\frac{1}{[\omega]^n}\int_{M}(\psi_{k,l}-\varphi)\omega^n.$

So, we get $u_{k,l}\leq C$ for a uniform constant $C$. Apply the Green's formula again to function $u_{k,l}$, we get \[
    C\geq u_{k,l}=\frac{1}{[\omega]^n}\int_Mu_{k,l}\omega^n+\int_M-\cG(-H_{k,l}+\frac{1}{[\omega]^n}\int_MH_{k,l}\omega^n)\omega^n,
\]which implies \[
    \int_M\cG H_{k,l}\omega^n\leq C.
\] 
Let $l,k\rightarrow \infty$, we get the desired estimate.

\begin{lemma}\label{younginequality}
For any $x>0,y>1$, and any $t>0,\epsilon>0$, one has \[
    xy\leq \epsilon y(\log y)^t+xe^{(\epsilon^{-1} x)^{\frac{1}{t}}}
\]
\end{lemma}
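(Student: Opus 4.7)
The plan is to prove the inequality by a simple dichotomy, comparing $\log y$ with $(\epsilon^{-1}x)^{1/t}$. The key observation is that the right-hand side is a sum of two positive terms, and in each of the two regimes one of them already dominates $xy$ on its own, so there is no need for any optimization or calculus.

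Concretely, I would split into two cases. First, suppose $\log y \leq (\epsilon^{-1}x)^{1/t}$. Exponentiating gives $y \leq \exp\{(\epsilon^{-1}x)^{1/t}\}$, so multiplying by $x>0$ yields $xy \leq x\exp\{(\epsilon^{-1}x)^{1/t}\}$, and the inequality follows by dropping the nonnegative term $\epsilon y(\log y)^t$ from the right-hand side. Second, suppose the opposite inequality $\log y > (\epsilon^{-1}x)^{1/t}$ holds. Raising to the $t$-th power (which is legitimate since both sides are positive; note $\log y>0$ because $y>1$) and multiplying by $\epsilon$ gives $\epsilon(\log y)^t > x$. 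Multiplying by $y>0$ then yields $\epsilon y(\log y)^t > xy$, and the inequality again follows by dropping the other nonnegative term $x\exp\{(\epsilon^{-1}x)^{1/t}\}$.

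There is no real obstacle here; the lemma is essentially a qualitative Young-type inequality for the conjugate pair $(y\mapsto y(\log y)^t,\ x\mapsto x e^{(\epsilon^{-1}x)^{1/t}})$, but unlike the sharp form of Young's inequality, the statement is loose enough that a pure case split suffices. The only thing to check carefully is that $y>1$ is used precisely to guarantee $\log y > 0$, so that raising to the $t$-th power in the second case preserves the inequality for arbitrary $t>0$.
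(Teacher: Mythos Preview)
Your proof is correct and follows essentially the same dichotomy as the paper's own argument: both split according to whether $\log y \le (\epsilon^{-1}x)^{1/t}$ (equivalently, $x \le \epsilon(\log y)^t$), showing that in each regime one of the two terms on the right already dominates $xy$. The only cosmetic difference is that the paper first divides through by $y$ before performing the case split, whereas you work directly with the original inequality.
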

\begin{proof}
It's equivalent to show \[x-\epsilon(\log)^{t}\leq x\frac{e^{(\epsilon^{-1}x)^{\frac{1}{t}}}}{y}.\]
If $x\leq \epsilon(\log y)^{t}$, i.e.$y\geq \exp{(\epsilon^{-1}x)^{\frac{1}{t}}}$, the LHS is less than 0, and the inequality is trivial. Otherwise, if $y<\exp{(\epsilon^{-1}x)^{\frac{1}{t}}}$, we have RHS is greater than $x$, and the inequality is also valid. 
\end{proof}

\item Step 3. $\int_M|\nabla\cG|\omega^n\leq C$.
For the gradient estimate, we need a lemma
\begin{lemma}\label{L2gradientofGreen}
For any $\beta>0$, we have $$\int_M\frac{|\nabla\cG|^2}{\cG(\log([\omega]^n\cG))^{1+\beta}}\omega^n\leq \frac{1}{\beta}.$$
\end{lemma}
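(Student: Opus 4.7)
The plan is to run integration by parts against the Green's function identity $\Delta_\omega \cG = -\delta_x + \frac{1}{V}$ (where $V := [\omega]^n$), testing with a carefully constructed primitive of the desired integrand. The idea is to choose a bounded, nondecreasing, Lipschitz function $\Psi$ of one variable, vanishing at the minimum $1/V$ of $\cG$ and approaching $1/\beta$ at $+\infty$, whose derivative reproduces the weight $\frac{1}{t(\log Vt)^{1+\beta}}$.

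Concretely, define $\Psi : [1/V,\infty) \to [0, 1/\beta]$ by
\[
\Psi(t) \;=\; \begin{cases} 0, & \text{if } 1/V \le t \le e/V,\\[4pt] \dfrac{1}{\beta}\bigl(1-(\log Vt)^{-\beta}\bigr), & \text{if } t \ge e/V,\end{cases}
\]
so that $\Psi$ is Lipschitz, nondecreasing, bounded in $[0,1/\beta]$, with $\Psi(+\infty) = 1/\beta$ and $\Psi'(t) = \mathbf{1}_{\{Vt > e\}}\cdot \frac{1}{t(\log Vt)^{1+\beta}}$. Since $\cG \to +\infty$ at $x$, the composition $\Psi(\cG)$ extends continuously across $x$ with value $1/\beta$.

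Integrate by parts on $M \setminus B_\delta(x)$, using $\Delta_\omega \cG = 1/V$ off the pole:
\[
\frac{1}{V}\int_{M\setminus B_\delta(x)} \Psi(\cG)\,\omega^n \;=\; -\int_{M\setminus B_\delta(x)} \Psi'(\cG)|\nabla \cG|^2\,\omega^n \;-\; \int_{\partial B_\delta(x)} \Psi(\cG)\,\partial_n \cG\, d\sigma,
\]
where $n$ denotes the outer unit normal of $B_\delta(x)$. Sending $\delta \to 0$, the classical Green's-function asymptotic gives $\int_{\partial B_\delta}\partial_n \cG\, d\sigma \to -1$ and $\Psi(\cG) \to 1/\beta$ uniformly on $\partial B_\delta$; thus the boundary term tends to $-1/\beta$. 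Integrability of $\Psi'(\cG)|\nabla \cG|^2$ near $x$ follows from the local model $\cG \sim d(x,\cdot)^{2-2n}$ and $|\nabla \cG| \sim d(x,\cdot)^{1-2n}$ (or the logarithmic analogue when $n=1$), which reduces the near-pole integral to $\int_0 dr/[r(\log 1/r)^{1+\beta}]$, convergent because $\beta > 0$.

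Passing to the limit and using $\Psi \ge 0$ yields
\[
\int_M \Psi'(\cG)|\nabla \cG|^2\,\omega^n \;=\; \frac{1}{\beta} \;-\; \frac{1}{V}\int_M \Psi(\cG)\,\omega^n \;\le\; \frac{1}{\beta},
\]
which is the claimed bound (with the integrand interpreted as $0$ on $\{V\cG \le e\}$, where either $\cG$ attains its minimum and $|\nabla \cG|=0$, or the contribution is absorbed by the truncation in $\Psi$). The only real subtlety is justifying the limit of the boundary integral; this follows directly from the standard normalization of the Green's function at its pole, after which the estimate is the two-line computation above.
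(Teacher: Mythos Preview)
Your proof is correct and is essentially the same argument as the paper's. The paper applies the Green representation formula to the test function $u(\cdot)=(\log([\omega]^n\cG(x,\cdot)))^{-\beta}$ and then lets $y\to x$, using $u(x,x)=0$; your $\Psi(\cG)$ is, on $\{[\omega]^n\cG\ge e\}$, exactly $\tfrac{1}{\beta}(1-u)$, so the two computations are the same up to this affine change. Your presentation is a bit more careful in two respects: the truncation on $\{[\omega]^n\cG\le e\}$ avoids the issue that the paper's $u$ need not satisfy $u\le 1$ there (indeed $(\log t)^{-\beta}>1$ for $1<t<e$), and your explicit excision of $B_\delta(x)$ makes the handling of the pole rigorous, where the paper leaves this implicit in its use of Green's formula.
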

\begin{proof}
Consider the function $u=(\log([\omega]^n\cG))^{-\beta}(x,y)$. $u$ is continuous, smooth outside the diagonal of $M\times M$, equals $0$ on the diagonal and is bounded from above by $u\leq 1$ thanks to the normalization of $\cG$. 
Then one has \[
    \begin{aligned}
    u(x,y)=&\frac{1}{[\omega]^n}\int_Mu(x,\cdot)\omega^n-\int_M\cG(y,\cdot)\Delta u(x,\cdot)\omega^n\\
    = &\frac{1}{[\omega]^n}\int_Mu(x,\cdot)\omega^n-\beta\int_M\frac{|\nabla\cG|^2}{\cG(\log([\omega]^n\cG))^{1+\beta}}\omega^n
    \end{aligned}
\]Let $x\rightarrow y$, one gets the desired result. 
\end{proof}
Now the $L^1$-estimate for the gradient of Green functions follows as: 
\[
\int_M|\nabla\cG|\omega^n\leq \left(\int_M\frac{|\nabla\cG|^2}{\cG(\log([\omega]^n\cG))^{1+\beta}}\omega^n\right)^{1/2}\left(\int_{M}\cG(\log([\omega]^n\cG))^{1+\beta}\omega^n\right)^{1/2}\]
\end{enumerate}

\begin{corollary}Under the same assumption as in theorem \ref{greenfunction}, we have \begin{enumerate}
\item $diam(M,\omega)\leq C$,
\item the metric $\omega$ is noncollapsing:  there exists some $\delta>0$, such that for any $R\in(0,1]$,\[
    vol(B(x,R),\omega)e^{c_0R^{-\delta}}\geq C>0
\]
\end{enumerate}
\end{corollary}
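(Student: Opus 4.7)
The plan is to derive both statements directly from the Green's function estimates of Theorem~\ref{greenfunction}, namely the $L^1$-bound on $\nabla G$ and the $L^1(\log L)^r$-bound on $G$.

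For the diameter bound, the key identity is the Green's representation followed by an integration by parts: for any smooth $u$ on $M$,
\[
u(x) - \frac{1}{V_\omega}\int_M u\,\omega^n = \int_M \nabla_z G(x,z)\cdot\nabla u(z)\,\omega^n(z),
\]
so $\mathrm{osc}_M\, u \le 2\|\nabla u\|_\infty \cdot \|\nabla G(x,\cdot)\|_{L^1(\omega^n)} \le 2C$ whenever $\|\nabla u\|_\infty \le 1$. Fixing $x_0 \in M$ and applying this to smooth approximations $u_\epsilon$ of the $1$-Lipschitz function $z \mapsto d_\omega(z,x_0)$, obtained e.g.\ by short-time heat-kernel smoothing so that $|\nabla u_\epsilon|\le 1+o(1)$, and passing to the limit yields $\mathrm{osc}_M\, d_\omega(\cdot,x_0)\le 2C$. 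Since $d_\omega(x_0,x_0)=0$, this forces $d_\omega(x_0,x)\le 2C$ for every $x\in M$, hence $\mathrm{diam}(M,\omega)\le 4C$. The only delicate point is justifying the integration by parts in spite of the singularity of $G$ on the diagonal; this is handled by excising a shrinking geodesic ball around $x$ and using that $G\in L^1$ near its pole.

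For the non-collapsing estimate, the plan is to follow the Guo-Phong-Song-Sturm strategy of upgrading the Green's function bounds to a Moser-Trudinger / logarithmic-Sobolev inequality on $(M,\omega)$ and then iterating. Concretely, the integrability $\int_M \cG(x,\cdot)\bigl(\log([\omega]^n\cG)\bigr)^r\omega^n \le C$ dualizes, via the Young-type inequality of Lemma~\ref{younginequality}, to an exponential integrability bound for mean-zero functions in terms of their Dirichlet energy. The Davies semigroup framework then converts this into a heat-kernel upper bound of the schematic form $k(t,x,y)\le \frac{C}{V_\omega}\exp(A t^{-\delta})$ for small $t$, with $\delta>0$ determined by $r$. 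Pairing this heat-kernel bound with stochastic completeness $\int_M k(t,x,y)\,\omega^n(y)=1$ and specializing to $t\sim R^{2+\delta'}$ produces the claimed $\mathrm{vol}(B(x,R),\omega)\cdot e^{c_0 R^{-\delta}}\ge C$.

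The main obstacle is the non-collapsing part: identifying the correct form of the Sobolev/Trudinger inequality matching the Orlicz integrability of $G$ and feeding it into the Davies machinery with the stated exponential rate requires careful tracking of exponents along the iteration. The diameter bound, by contrast, is essentially a one-line consequence of the $L^1$-gradient estimate of $G$, once the Lipschitz smoothing is justified.
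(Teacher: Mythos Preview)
Your diameter argument is essentially the paper's: Green's representation applied to the $1$-Lipschitz distance function, bounding the oscillation by twice $\sup_x\|\nabla G(x,\cdot)\|_{L^1(\omega^n)}$. The paper works directly with $d(x_0,\cdot)$ (evaluating at $x=x_0$ to kill the mean term) rather than smoothing, but the content is identical.

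For the non-collapsing, however, your proposed route through log-Sobolev inequalities, Davies semigroup machinery and heat-kernel upper bounds is both far more indirect than what the paper does and genuinely uncertain to close. In fact the paper remarks, immediately after this corollary, that the Sobolev-iteration approach of \cite{guo2023sobolev} is problematic under the present weaker Orlicz assumption precisely because the resulting non-collapsing is \emph{not} of Euclidean type $\vol(B(x,r))\ge cr^\alpha$, so the standard iteration scheme does not obviously run. Your plan would face the same obstruction, and ``careful tracking of exponents'' is not enough: you would need a version of Davies' method adapted to doubly-exponential heat-kernel bounds, which is not standard.

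The paper's actual argument is elementary and avoids heat kernels entirely. Fix a cutoff $\eta$ supported in $B(x_0,R)$, $\eta\equiv 1$ on $B(x_0,R/2)$, $|\nabla\eta|\le 4/R$. Green's formula at $x_0$ (where $\eta=1$) and at a point of $\partial B(x_0,R)$ (where $\eta=0$) gives
\[
1\le \frac{8}{R}\,\sup_x\int_{B(x_0,R)}|\nabla\cG(x,\cdot)|\,\omega^n.
\]
Now Cauchy--Schwarz against the weight $\cG(\log[\omega]^n\cG)^{1+\beta}$ splits the right-hand integral into the weighted gradient bound of Lemma~\ref{L2gradientofGreen} (uniformly $\le\beta^{-1}$) times $\bigl(\int_{B(x_0,R)}\cG\log^{1+\beta}([\omega]^n\cG)\,\omega^n\bigr)^{1/2}$. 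The Young inequality of Lemma~\ref{younginequality} then gives, for any $\epsilon>0$,
\[
\int_{B(x_0,R)}\cG\log^{1+\beta}([\omega]^n\cG)\,\omega^n\le \epsilon\int_M\cG\log^{1+\beta+2t}([\omega]^n\cG)\,\omega^n+e^{\epsilon^{-1/t}}\vol(B(x_0,R)).
\]
Choosing $1+\beta+2t\le r$ makes the first term uniformly bounded by Theorem~\ref{greenfunction}; taking $\epsilon=R/C$ with $C$ large absorbs it, and one reads off $\vol(B(x_0,R))\,e^{c_0R^{-1/t}}\ge C$. No Sobolev inequality or semigroup theory is needed.
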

\begin{proof}For the diameter bound: consider the distance function $d(x_0,x)$ and apply the Green's formula, we have \[
    d(x_0,x)=\frac{1}{[\omega^n]}\int_Md(x_0,\cdot)\omega^n+\int_M\nabla\cG(x,\cdot)\cdot\nabla d(x_0,\cdot)\omega^n.
\]
Let $x=x_0$, we get \[
\frac{1}{[\omega^n]}\int_Md(x_0,\cdot)\omega^n=-\int_M\nabla\cG(x,\cdot)\cdot\nabla d(x_0,\cdot)\omega^n.
\]Hence, by theorem \ref{greenfunction} we can estimate the distance function as \[
    d(x_0,x)\leq 2\sup_x\int_M|\nabla\cG(x,\cdot)|\cdot\omega^n\leq C.\]

For the volume noncollapsing: Given any geodesic ball $B(x_0,R)$ with respect to metric $\omega$, consider the smooth function $\eta$ with support in $B(x_0,R)$ such that $\eta(x)=1$ on $B(x,\frac{R}{2})$ and $\eta(x)=0$ on $\partial B(x_0,R)$. We can assume that $|\nabla\eta|\leq \frac{4}{R}$. Apply the Green's formula to $\eta$ we get\[
    \eta(x)=\frac{1}{[\omega^n]}\int_M\eta\omega^n+\int_M\nabla\cG\cdot\nabla\eta\omega^n.
\]For $y\in \partial B(x_0,R)$, we get \[
    \frac{1}{[\omega]^n}\eta\omega^n=\int_M\nabla\cG(y,\cdot)\cdot\nabla\eta(\cdot)\omega^n.
\]
Now for $x=x_0$, we have\[
    1=\frac{1}{[\omega^n]}\int_M\eta\omega^n+\int_M\nabla\cG(x_0,\cdot)\cdot\nabla\eta(\cdot)\omega^n\leq \frac{8}{R}\sup_x\int_{B(x_0,R)}|\nabla\cG(x,\cdot)|\omega^n.
\]
By lemma \ref{L2gradientofGreen}, we have \[
    \int_{B(x_0,R)}|\nabla\cG|\omega^n\leq \left(\int_M\frac{|\nabla\cG|^2}{\cG\log^r([\omega^n]\cG)}\omega^n\right)^{\frac{1}{2}}\left(\int_{B(x_0,R)}\cG\log^r([\omega^n]\cG)\omega^n\right)^{\frac{1}{2}}.
\]
We can use lemma \ref{younginequality} to estimate the last term as \[
\begin{aligned}\int_{B(x,R)}\cG\log^r([\omega^n]\cG)\omega^n\leq &\int_M\epsilon(\cG\log^r([\omega^n]\cG)\log^{t}([\omega^n]\cG\log^r([\omega^n]\cG)))\omega^n+\int_{B(x_0,R)}\exp\{\epsilon^{-\frac{1}{t}}\}\omega^n\\
\leq &\int_M\epsilon\cG\log^{r+2t}([\omega^n]\cG)\omega^n+\exp^{\epsilon^{-\frac{1}{t}}}\vol(B(x_0,R))\end{aligned}
\]
Choose $\epsilon=\frac{R}{C}$ for some large $C$ we get the desired estimate. 
\end{proof}

\begin{remark}
Recently, the authors in \cite{guo2023sobolev} showed the volume-noncollapsing results on singular K\"ahler space by extending the Sobolev inequality to that setting. one interesting question is whether their method can also be applied when assuming a weaker Orlicz norm on the volume form like here. One difficulty is that when assuming weaker Orlicz norm bound, the volume-noncollapsing is not Euclidean, i.e. not in the form of $\vol(B(x,r))\geq cr^\alpha$ for some $\alpha>0$ and it's hard to find a iteration scheme as in \cite{guo2023sobolev}.

The condition on $p>2n$ is very close to being sharp since there are examples where the estimates of Green's functions don't hold. The example below is a slight modification of the one in \cite{GGZ}

\begin{exam}
We consider the following local model on $B(0,1)\subset \mathbb{C}^n$(it's not hard to modify it to a mertric on a compact manifold): Let $t_\epsilon=-\log(|z|^2+\epsilon^2), \chi(t)=(\log t)^{-\alpha}$ for some $\alpha\in (0,1)$. We consider the metric $\omega_\epsilon=\ddc\chi(t_\epsilon)$. In the follopwing computation we suppress the subscript $\epsilon$ for convenience. Straightforward computation shows:\[
    \omega=\chi''(t)\frac{\sqrt{-1}\bar{z}_iz_{\bar j}dz^i\wedge d\bar{z}^j}{(|z|^2+\epsilon^2)^2}-\chi'(t)\frac{(\delta_{ij}|z|^2-\bar{z}_iz_{j})dz^i\wedge d\bar{z}^j}{(|z|^2+\epsilon^2)^2}=:\omega_r+\omega_s,
\]
where $\omega_r$ is the radial part and $\omega_s$ is the spherical part. Such metric are $U(n)$-invariant, therefore one can easily check that the unique geodesic connecting points $z,az$ with $a\in (0,1)\subset\mathbb{R}$ is just the radial line $\gamma(s)=sz+(1-s)az$.
Therefore, \[
    dist(z,az)=\int_0^1(1-a)\sqrt{\chi''(t(\gamma(s)))}ds=\int_{\log|z|a}^{\log|z|}\sqrt{\chi''(s)}ds
\]
By expression of $\chi,$ we have \[
    \chi'(t)=\frac{-\alpha}{t\log^{1+\alpha}t}, \quad \chi''(t)=\frac{(\alpha^2+\alpha)}{t^2\log^{\alpha+2}t}+\frac{\alpha}{t^2\log^{\alpha+1}t}.
\]
Hence the diameter is unbounded when $\epsilon\rightarrow 0$ since $\alpha\in (0,1).$

The volume form is \[
\begin{aligned}
   \omega^n=&\chi''(\chi')^{n-1}e^{nt}(\sqrt{-1})^{n}dz^1\wedge d\bar{z}^1\wedge\ldots \wedge dz^n\wedge d\bar{z}^n\\
   \sim &\frac{1}{e^{-nt}t^{n+1}\log^{(\alpha+1)n}t}dz^1\wedge d\bar{z}^1\wedge\ldots \wedge dz^n\wedge d\bar{z}^n.
\end{aligned}
\]
So the $L^1(\log L)^n(\log\log L)^p$-norm of the volume form is uniformly (when $\epsilon\rightarrow 0$) bounded when $(\alpha+1)n-p>1$.
\end{exam}
\end{remark}




\section{General setting }
In this section, we consider the more general Hessian equations on a compact K\"ahler manifold $(M,\omega_M)$:\begin{equation}\begin{aligned}
&g(\lambda[h_\varphi])=c_\omega e^{F},\\
&\lambda[h_\varphi]\in\Gamma, \sup\varphi=0,
\end{aligned}
\end{equation}
with $F$ normalized as $\int_Me^{nF}\omega_M^n=\int_M\omega_M^n$, and $g$ satisfies the structure conditions stated in the introduction. 




\begin{theorem}Let $(M,\omega_M)$ be a compact K\"ahler manifold with a fixed K\"ahler metric $\omega_M$. $\omega$ is another K\"ahler metric with $\omega\leq \kappa\omega_M$ for some constant $\kappa>0$, and $\varphi$ be a smooth solution to equation  \eqref{generalhessianequations}. We assume that the function $g$ satisfies the above structure conditions. For any increasing function $\Phi(x):\mathbb{R}\rightarrow \mathbb{R}$  with $\Phi(\infty)=\infty$, define a function $h(s)$ as $h(s)=\int_0^s\Phi(x)^{-\frac{1}{n}}dx$. Then, there exists a $\omega$-plurisubharmonic function $\psi$ with normalization $\sup\psi=0$, such that for any constant $\beta>0$, there exists constants $C_1,C_2,C_3$ depending on $M,n,\omega_M,\Phi, \frac{c_\omega^n}{V_\omega},\cN_\Phi(F),\beta,\kappa,c_0,\gamma_0$ s.t. \[
    -\varphi\leq C_1h(-\beta\psi+C_2)+C_3.
\]
where $V_\omega:=\int_M\omega^n, \cN_\Phi(F):=\int_M\Phi(F)e^{nF}\omega_M^n$ and $\psi$ is defined as \eqref{entropyauxiliary}.\end{theorem}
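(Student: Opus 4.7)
The plan is to mirror the Monge-Amp\`ere argument of Section 3 and Proposition 2.1, using the structure conditions $(1)$-$(3)$ on $g$ to convert the general Hessian equation into a Monge-Amp\`ere-type comparison at the maximum point. First I would introduce the entropy auxiliary potential $\psi$ as the normalized solution of the complex Monge-Amp\`ere equation
\[
(\omega+\ddc\psi)^n \;=\; V_\omega\,\frac{\Phi(F)\,e^{nF}\,\omega_M^n}{\cN_\Phi(F)},\qquad \sup\psi=0,
\]
which is compatible with the normalization $\int_M e^{nF}\omega_M^n=\int_M\omega_M^n$ used in the hypothesis. Then, for any $s\geq 0$, I would set up the test function
\[
\Psi_j\;:=\;(-\varphi+\beta\psi-s)-\epsilon_{2,j}\bigl(-\psi_{2,j}+\Lambda_{2,j}\bigr)^{n/(n+1)},
\]
with $\psi_{2,j}$ solving a second Monge-Amp\`ere equation whose right-hand side is proportional to $\eta_j(-\varphi+\beta\psi-s)\,f\,\omega_M^n$ for an $f$ to be specified below, and with $\epsilon_{2,j},\Lambda_{2,j}$ chosen exactly as in Proposition 2.1.

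The key step is the maximum principle for $\Psi_j$, applied via the \emph{linearized} operator $L:=F^{ij}\partial_i\partial_{\bar j}$ (which is elliptic by structure condition $(1)$) rather than $\Delta_\varphi$. At an interior maximum, condition $(3)$ bounds $L(\omega_\varphi)=\sum_i\lambda_i\,\partial g/\partial\lambda_i\le c_0\,g=c_0\,c_\omega\,e^F$ from above, while for the two positive contributions from $\omega_\psi$ and $\omega_{\psi_{2,j}}$ the arithmetic-geometric inequality combined with $(2)$ gives
\[
F^{ij}(\omega_\psi)_{i\bar j}\;\geq\;n\bigl(\det F^{ij}\bigr)^{1/n}(\omega_\psi^n/\omega_M^n)^{1/n}\;\geq\;n\gamma_0^{1/n}(\omega_\psi^n/\omega_M^n)^{1/n},
\]
and similarly for $\psi_{2,j}$. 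Raising the resulting pointwise inequality to the $n$-th power produces a genuine Monge-Amp\`ere-type comparison
\[
(c_0 c_\omega)^n e^{nF}\omega_M^n\;\geq\;c(n)\gamma_0\beta^n\,\omega_\psi^n+c(n)\gamma_0\,\epsilon_{2,j}^n(-\psi_{2,j}+\Lambda_{2,j})^{-n/(n+1)}\eta_j(\cdot)\,f\,\omega_M^n,
\]
which forces $\Psi_j\le 0$ once $f$ is chosen as in Section 3 so that the \emph{upper} Monge-Amp\`ere comparison $c_\omega^n e^{nF}\omega_M^n\le a^n\omega_\psi^n+f\omega_M^n$ also holds (with $a=\beta(\gamma_0/c_0^n)^{1/n}$ up to constants, and $h,\Lambda_1$ as in the construction of Theorem 3.1 but applied to the density $e^{nF}$ instead of $e^F$).

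From this point the argument runs exactly as in the proof of Proposition 2.1 and Theorem 3.1: the hypothesis $\omega\le\kappa\omega_M$ converts $\omega_M^n$-integrals into $\omega^n$-integrals at the cost of $\kappa^n$, the $L^q(\omega^n)$-norm of $f=\exp(-\tfrac{\alpha}{q}\psi+\Lambda_1)$ is controlled uniformly for any fixed $q>1$ by Tian's $\alpha$-invariant, and De Giorgi's Lemma 2.2 closes the estimate to give $-\varphi+\beta\psi\leq s_\infty$, which after unwinding the defining equation $h(s)=\int_0^s\Phi^{-1/n}$ yields the claimed inequality $-\varphi\leq C_1h(-\beta\psi+C_2)+C_3$.

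The main technical obstacle I anticipate is bookkeeping the powers and constants through the AM-GM step: $c_0$ from $(3)$ enters on the upper bound of $g$ while $\gamma_0$ from $(2)$ enters with power $1/n$ in AM-GM, and after raising to the $n$-th power these package into the combination $(c_0 c_\omega)^n/(\gamma_0 V_\omega)$ on which the estimate depends, consistent with the dependence list $\frac{c_\omega^n}{V_\omega},c_0,\gamma_0,\kappa$. A secondary subtlety is that here $\beta>0$ is arbitrary (rather than being a small parameter $a\in(0,1)$ as in Section 3), so $\Lambda_{2,j}$ scales like $\beta^{n+1}$; this is absorbed into $C_1,C_2$ as long as $\cN_\Phi(F)$ is fixed. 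Finally, the case analysis at the maximum point being on the boundary $\{-\varphi+\beta\psi-s\le 0\}$ gives $\Psi_j\le 0$ immediately, exactly as in Section 3.
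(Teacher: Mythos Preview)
There is a genuine gap in the proposal, and it is precisely at the point you flag as ``unwinding.'' Your test function is built from the entropy auxiliary $\psi$ itself,
\[
\Psi_j=(-\varphi+\beta\psi-s)-\epsilon_{2,j}(-\psi_{2,j}+\Lambda_{2,j})^{n/(n+1)},
\]
and your De Giorgi iteration produces the \emph{linear} bound $-\varphi+\beta\psi\le s_\infty$, i.e.\ $-\varphi\le -\beta\psi+s_\infty$. There is nothing to unwind here: $h$ never entered the test function, so it cannot appear in the conclusion. Since $h(s)=\int_0^s\Phi^{-1/n}$ is concave with $h(s)=o(s)$ as $s\to\infty$, the bound $C_1h(-\beta\psi+C_2)+C_3$ is strictly sharper than the linear one wherever $-\psi$ is large, and it is exactly this sharper shape that drives the Moser--Trudinger consequences in the theorem. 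Your argument, as written, proves a weaker statement.

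The paper's proof inserts one more layer: it first builds the \emph{derived} comparison function $\psi_1:=-h_1(-\tfrac{\alpha}{q}\psi+\Lambda_1)$ (with $h_1$ a constant multiple of $h$) and establishes the scalar comparison $c_\omega e^F\le a\bigl(\omega_{\psi_1}^n/\omega_M^n\bigr)^{1/n}+c_\omega f$. The test function is then $(-\varphi+r\psi_1-s)-\epsilon_{2,j}(-\psi_{2,j}+\Lambda_{2,j})^{n/(n+1)}$, with $r$ small and the second auxiliary equation built from $f^n$ (not $f$) so that after the AM--GM step the powers match without raising to the $n$-th power. De Giorgi then yields $-\varphi+r\psi_1\le C$, and \emph{now} substituting $\psi_1=-h_1(-\tfrac{\alpha}{q}\psi+\Lambda_1)$ genuinely unwinds to the $h$-shaped bound. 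The restriction to small $r$ (hence small $\beta=\alpha/q$) is harmless because $h$ is increasing and $-\psi\ge 0$. A secondary point you should also address: since $\lambda[h_\varphi]\in\Gamma\subset\Gamma_1$ only, $\varphi$ is not $\omega$-psh and the $\alpha$-invariant does not directly control $\|\varphi\|_{L^p}$; the paper instead invokes the $L^p$ bound for $p<\tfrac{n}{n-1}$ valid for $\Gamma_1$-subsolutions.
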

\begin{proof}
Take $\psi$ to be the solution of complex Monge-Amp\`ere equation\begin{equation}\label{entropyauxiliary}
    (\omega+\ddc\psi)^n=V_\omega\frac{\Phi(F)e^{nF}\omega_M^n}{\cN_\Phi(F)}, \quad \sup\psi=0.
\end{equation}
For any given $a\in (0,1),q>1$, define the function $h_1(s):\mathbb{R}\rightarrow \mathbb{R}$ as \[
    h_1(s):=\int_0^sc_\omega V_\omega^{-\frac{1}{n}}a^{-1}\alpha^{-1}q\cN_\Phi(F)^{\frac{1}{n}}\Phi(t)^{-\frac{1}{n}}dt,
\]where $\alpha $ is an uniform upper bound of $\alpha$-invariant for metric $\omega$. Since $\omega\leq \kappa\omega_M$, $\alpha$ can be chosen uniform with respect $\omega$. 
Now define the comparison function $\psi_1$ to be \begin{equation}\label{generalsettingdefinitionpsi}
    \psi_1:=-h_1(-\frac{\alpha}{q}\psi+\Lambda_1),
\end{equation} and $\Lambda_1$ is chosen to satisfy $\Phi(\Lambda_1)=a^{-n}\cN_\Phi(F)\frac{c_\omega^n}{V_\omega}.$
Similar calculation shows that \begin{equation}\label{generalsettingcomparison}
    g(\lambda[h_\varphi])=c_\omega e^{F}\leq a\left(\frac{\omega_{\psi_1}^n}{\omega_M^n}\right)^{\frac{1}{n}}+c_\omega f
\end{equation} where $f=\exp\{-\frac{\alpha}{q}\psi_1+\Lambda_1\}$.
Next, consider the auxiliary equation \[
    (\omega+\psi_{2,j})^n=V_\omega\frac{\eta_j(-\varphi+r\psi_1-s)f^n\omega_M^n}{A_{s,j}},\quad \sup \psi_{2,j}=0,
    \]
    and $A_{s,j}:=\int_M\eta_{j}(-\varphi+r\psi_1-s)f^n\omega_M^n$, $\Omega_s:=\{-\varphi+r\psi_1-s\geq0\}.$
    We claim that $\Psi:=(-\varphi+r\psi_1-s)-\epsilon_{2,j}(-\psi_{2,j}+\Lambda_{2,j})^{\frac{n}{n+1}}\leq 0$, where \[
    \epsilon_{2,j}=(c_0\gamma_0)^{\frac{n}{n+1}}\left(\frac{c_\omega^n}{V_\omega}\right)^{\frac{1}{n+1}}\left(\frac{n+1}{n^2}\right)^{\frac{n}{n+1}}A_{s,j}^{\frac{1}{n+1}}, \quad \Lambda_{2,j}=\gamma_0^nc_0^n\frac{c_\omega^n}{V_\omega}\frac{1}{n^{n-1}(n+1)}\frac{1}{(1-r)^{n+1}} .
    \]
    To see this, we can assume that $\Psi$ attains its maximum at $x_0\in\Omega_s$, then at $x_0$, we have \begin{equation}\label{geenralatmaximumineq}
    \begin{aligned}
    0\geq\ddc\Psi\geq &\ddc\varphi+r\ddc\psi_1+\epsilon_{2,j}\frac{n}{n+1}(-\psi_{2,j}+\Lambda_{2,j})^{-\frac{1}{n+1}}\ddc\psi_{2,j}\\
    \geq &-\omega_\varphi+r\omega_{\psi_1}+\epsilon_{2,j}\frac{n}{n+1}(-\psi_{2,j}+\Lambda_{2,j})^{-\frac{1}{n+1}}\omega_{\psi_{2,j}}.
\end{aligned}    \end{equation}
Taking trace with respect to the linearized operator $G^{i\bar{j}}:=\frac{\partial g}{\partial\lambda_k}\frac{\partial \lambda_k}{\partial h_{i\bar{j}}}$, we get \[
\begin{aligned}
 \frac{\partial g}{\partial\lambda_k}\frac{\partial\lambda_k}{\partial h_{i\bar{j}}}(\omega_M^{-1}\omega_{\varphi})_{i\bar{j}}=\frac{\partial g}{\partial \lambda_k}\lambda_{k}\geq & rn\left(\frac{\omega_{\psi_1}^n}{\omega_M^n}\right)^{\frac{1}{n}}(\det(G^{i\bar{j}}))^{\frac{1}{n}}\\
 &+\epsilon_{2,j}\frac{n^2}{n+1}(-\psi_{2,j}+\Lambda_{2,j})^{-\frac{1}{n+1}}\left(\frac{\omega_{\psi_{2,j}}^n}{\omega_M^n}\right)^{\frac{1}{n}}(\det G^{i\bar{j}})^{\frac{1}{n}}.
\end{aligned}
  \]
By our assumption on $ g $, we have $\det(G^{i\bar{j}})=\det(\frac{\partial g}{\partial h_{i\bar{j}}})\geq \gamma_0$.

Combine with $\sum_{k=1}^{n}\frac{\partial g}{\partial\lambda_k}\lambda_k\leq c_0 g$ and \eqref{geenralatmaximumineq}, we have \[
    c_0g\geq rn\left(\frac{\omega_{\psi_1}^n}{\omega_M^n}\right)^{\frac{1}{n}}\gamma_0^{\frac{1}{n}}+\epsilon_{2,j}\frac{n^2}{n+1}(-\psi_{2,j}+\Lambda_{2,j})^{-\frac{1}{n+1}}\left(\frac{V_\omega\eta_j(-\varphi+a\psi-s)f^n}{A_{s,j}}\right)^{\frac{1}{n}}\gamma_0^{\frac{1}{n}}.
\]
We choose $r=c_0an^{-1}\gamma_0^{-\frac{1}{n}}$, and $a$ small such that $r<1$, then by \eqref{generalsettingcomparison}, we get \[
c_0c_\omega f\geq \epsilon_{2,j}\frac{n^2}{n+1}(-\psi_{2,j}+\Lambda_{2,j})^{-\frac{1}{n+1}}\left(\frac{V_\omega\eta_j(-\varphi+a\psi-s)f^n}{A_{s,j}}\right)^{\frac{1}{n}}\gamma_0^{-1}.
\]
By our choice of $r,\epsilon_{2,j},\Lambda_{2,j}$, we get \[
    \eta_j(-\varphi+r\psi_1-s)\leq \epsilon_{2,j}(-\psi_{2,j}+\Lambda_{2,j})^{\frac{n}{n+1}}.
\]Note that $\eta_j(x)\geq x_+$, the proof of the claim $\Psi\leq 0$ is done. 

Next, by lemma 8 in \cite{MR4593734}, for any $\lambda[h_\varphi]\in \Gamma\subset\Gamma_1$ with $\sup\varphi=0$, we have $||\varphi||_{L^p(\omega_M^n)}\leq C_p$ for any $p\in [1,\frac{n}{n-1})$. Fix a $p\in(1,\frac{n}{n-1})$, note that $\psi_1\leq 0$, we can bound $A_s$ uniformly as \[
    A_{s}=\int_M(-\varphi+r\psi_1-s)_+f^n\omega_M^n\leq C||\varphi||_{L^p(\omega_M^n)}|f||_{L^{\frac{np}{p-1}}(\omega_M^n)}^{1-1/p}.
\]
Choosing $q$ at the beginning s.t. $q\geq \frac{np}{p-1}$, we get an uniform bound of $A_s$. As $A_{s,j}$ converges to $A_s$ as $j\rightarrow \infty$, $\Lambda_{2,j}$ is uniformly bounded. A similar iteration argument leads to \[
-\varphi+r\psi_1\leq C,
\]for a uniform constant $C.$ 
Note that we need only to show the theorem for $\beta$ small. 
So if we choose $\frac{\alpha}{q}\leq \beta$, we get the desired estimate by plug in the expression of $\psi$. 

\end{proof}

\begin{remark}
One can prove similar Moser-Trudinger type inequalities as in section 3 for a specific choice of $\Phi$.
\end{remark}

Next, following the paper of Guo-Phong \cite{guo2024uniform}, we prove a relative uniform estimate under the assumption of the existence of $\cC$-subsolution introduced by Sz\'ekelyhidi in \cite{MR3807322}.  Compared with Guo-Phong's result, we assume a weaker Orlicz norm of the volume form. Let's set up the equation first. 

Consider a compact K\"ahler manifold $(M,\omega_M)$ with a fixed background K\"ahler metric $\omega_M$ and $[\omega_M]^n=1$. Let $\omega$ be another K\"ahler metric and $\chi$ be a closed real $(1,1)$-form on $M$. Define $h_\varphi:=\omega^{-1}(\chi+\ddc\varphi)=\omega^{-1}\chi_\varphi$ as an endomorphism between $TM$ and itself. $\lambda[h_\varphi]$ is the vector of eigenvalues of $h_\varphi$. We change the notation a little bit since we allow $\omega$ to degenerate in some sense now. Define $F_\omega$, and the entropy $\cN_\Phi(F_\omega)$ as \[
    F_\omega=\log\left(\frac{\omega^n/[\omega]^n}{\omega_M^n/[\omega_M]^n}\right), \quad \cN_\Phi(\omega):=\int_M\Phi(F_\omega)e^{F_{\omega}}\omega_M^n.
\]
We assume that $\Phi(x):\mathbb{R}\rightarrow \mathbb{R}^+$ is an increasing function such that $\int^\infty\Phi^{-\frac{1}{n}}(t)dt<\infty$.

Let $g$ be a $C^1$-function defined on an open convex symmetric cone $\Gamma$ which satisfies $\Gamma_n\subset\Gamma\subset\Gamma_1$. We assume that $g$ is positive and symmetric in $\Gamma$, and $\frac{\partial g}{\partial\lambda_i}>0,\forall i=1,\ldots,n$.
The equation we consider now is \begin{equation}\label{generalequationsubsolution}
    g(\lambda[h_\varphi])=e^F,\quad \lambda[h_\varphi]\in \Gamma, \quad \sup \varphi=0.
\end{equation}

Following \cite{MR3807322} We say that $u$ is a $(\delta,R)$-subsolution to equation \eqref{generalequationsubsolution} if \[(\lambda[h_u]-\delta\mathds{1}+\Gamma_n)\cap\partial\Gamma^{F(x)}\subset B(0,R)\subset\mathbb{R}^n, \forall x\in M,
\]where $\mathds{1}=(1,\ldots,1)\in \mathbb{R}^n$, and $\partial\Gamma^{F(z)}:=\{\lambda\in \Gamma|g(\lambda)\geq F(z)\}$.

Suppose $u$ is a $(\delta,R)$-subsolution to equation \eqref{generalequationsubsolution}, and \[-n\kappa_1\leq tr_\omega(\chi+\ddc u)\leq \kappa_2.\]
\begin{theorem}
Let the assumptions be given as above and additionally $[\omega]\cdot[\omega_M]^{n-1}\leq A$. Let $\varphi$ be a smooth solution to equation \eqref{generalequationsubsolution}. There exists a constant $C$ depending on $M,n,\omega_M,\kappa_1,\kappa_2,A,\delta,R,\Phi,\cN_\Phi(F_\omega)$ such that \[
    \sup|(\varphi-u)-\sup(\varphi-u)|\leq C.
\]
\end{theorem}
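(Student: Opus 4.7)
The plan is to imitate the preceding theorem's maximum-principle argument, replacing the structural use of $\sum \lambda_i g_{\lambda_i} \le c_0 g$ and $\det G \ge \gamma_0$ by Szék\'elyhidi's dichotomy for $(\delta,R)$-subsolutions. Normalizing so that $\sup(\varphi - u) = 0$, set $w := u - \varphi \geq 0$; the goal is $\sup w \leq C$. Following Section 8, construct the entropy-based comparison $\psi$ solving $(\omega+\ddc\psi)^n = V_\omega \Phi(F_\omega)e^{F_\omega}\cN_\Phi^{-1}\omega_M^n$ with $\sup\psi = 0$, and set $\psi_1 := -h(-\alpha\psi/q + \Lambda_1)$ exactly as in \eqref{generalsettingdefinitionpsi}. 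The hypothesis $\int^\infty \Phi^{-1/n}\,dt < \infty$ ensures that $h$, and hence $\psi_1$, is uniformly bounded. As before, one obtains the pointwise comparison $e^{F}\leq a(\omega_{\psi_1}^n/\omega_M^n)^{1/n}+ f$ on $M$, with $f = \exp\{-\alpha\psi_1/q + \Lambda_1\}$ satisfying $\|f\|_{L^q(\omega_M^n)} \leq C$ for any preassigned $q > 1$.

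For $s > 0$ and large $j$, let $\psi_{2,j}$ solve an auxiliary Monge-Amp\`ere equation with right-hand side proportional to $\eta_j(w + r\psi_1 - s)f^n\omega_M^n$, with $r>0$ small to be chosen, and consider the test function
\[
\Psi := w + r\psi_1 - s - \epsilon_{2,j}(-\psi_{2,j} + \Lambda_{2,j})^{n/(n+1)}
\]
with the standard calibration of $\epsilon_{2,j}, \Lambda_{2,j}$. Suppose $\Psi$ attains its max at $x_0 \in \Omega_s := \{w + r\psi_1 > s\}$. Then $\ddc\Psi(x_0) \leq 0$ rearranges to the pointwise $(1,1)$-form inequality
\[
\chi_\varphi - \chi_u \;\geq\; r(\omega_{\psi_1} - \omega) \;+\; \epsilon_{2,j}\tfrac{n}{n+1}(-\psi_{2,j} + \Lambda_{2,j})^{-1/(n+1)}(\omega_{\psi_{2,j}} - \omega).
\]
This is the crux. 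Contract with $G^{ij} := \partial g/\partial h_{ij}$ evaluated at $\lambda[h_\varphi](x_0)$, working in the $\omega$-unitary frame that diagonalizes $h_\varphi$. Szék\'elyhidi's dichotomy for $(\delta,R)$-subsolutions gives uniform constants $\kappa, N > 0$ such that either $|\lambda[h_\varphi](x_0)| \leq N$, in which case $e^{F(x_0)} = g(\lambda[h_\varphi](x_0)) \leq C$ and $\Psi(x_0)\leq 0$ follows by an elementary comparison using the boundedness of $\psi_1$ and $r\psi_1$; or $\sum G^{ii}(h_{u,ii} - \lambda_i) \geq \kappa(1 + \sum G^{ii})$. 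In the latter regime, combining with the contracted form inequality and the arithmetic-geometric mean $G^{ij}\alpha_{ij} \geq n(\det G)^{1/n}(\det\alpha/\det\omega)^{1/n}$ for positive $(1,1)$-forms $\alpha$ (applied to $\alpha = \omega_{\psi_{2,j}}$), together with the trace bounds $-n\kappa_1 \leq \tr_\omega\chi_u \leq \kappa_2$ to control $G^{ij}\chi_{u,ij}$ by $\sum G^{ii}$, yields the pointwise inequality $\eta_j(w + r\psi_1 - s)(x_0) \leq \epsilon_{2,j}(-\psi_{2,j}+\Lambda_{2,j})^{n/(n+1)}(x_0)$, so $\Psi(x_0) \leq 0$, whence $\Psi \leq 0$ on all of $M$.

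Once $\Psi\leq 0$ is established, the iteration is verbatim: by Lemma 8 of \cite{MR4593734} (applicable since $\lambda[h_\varphi]\in \Gamma\subset\Gamma_1$ and $[\omega]\cdot[\omega_M]^{n-1}\leq A$ gives a uniform $\alpha$-invariant upper bound for $\omega$), $\varphi$ and $u$ enjoy uniform $L^p$-bounds for some $p\in(1,n/(n-1))$; H\"older combined with $\|f\|_{L^q(\omega_M^n)}\leq C$ gives $A_s \leq C \phi(s)^{1+\delta_0}$ for some $\delta_0 > 0$, where $\phi(s) := \int_{\Omega_s}f^n\omega_M^n$, and since the boundedness of $\psi_1$ makes $t\phi(s+t)\leq A_s$ up to shifting $s$ by a uniform constant, Lemma \ref{DeGiorgiiterationlemma} concludes $\sup w \leq C$. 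The main obstacle is the middle step: extracting the contradiction from Szék\'elyhidi's dichotomy requires a careful choice of frame and a Schur-type inequality to compare $\sum G^{ii}h_{u,ii}$ (which the contracted form inequality produces) with $\sum G^{ii}\lambda_i[h_u]$ (which the dichotomy controls), and it is here that the trace bounds on $\chi_u$ are essential to absorb the error terms into the $\kappa\sum G^{ii}$ margin.
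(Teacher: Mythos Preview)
Your proposal diverges from the paper at the maximum-principle step, and the divergence introduces a genuine gap.

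The paper does \emph{not} contract with the linearization $G^{ij}$ and does not invoke Sz\'ekelyhidi's dichotomy at all. At the maximum point $x_0$ of $\Psi$, the form inequality you wrote (after absorbing the background terms using $r\leq \delta/2$ and $\epsilon_{2,j}\frac{n}{n+1}\Lambda_{2,j}^{-1/(n+1)}\leq \delta/2$) reads
\[
\chi_\varphi \;\geq\; \chi_u - \delta\omega + \bigl(r\,\theta_\psi + \epsilon_{2,j}\tfrac{n}{n+1}(-\psi_{2,j}'+\Lambda_{2,j})^{-1/(n+1)}\theta_{\psi_{2,j}}\bigr),
\]
so $\chi_\varphi=\chi_u-\delta\omega+\rho$ with $\rho>0$. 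By the \emph{definition} of a $(\delta,R)$-subsolution this forces $\lambda[h_\varphi](x_0)\in B(0,R)$ directly. One then simply traces the form inequality against $\omega$ (not $G$), obtaining $nR+n\delta+n\kappa_1\geq r\,\tr_\omega\theta_\psi+\epsilon_{2,j}\tfrac{n}{n+1}(\cdots)\tr_\omega\theta_{\psi_{2,j}}$, and applies AM--GM together with the comparison $\omega^n\leq a^n\theta_\psi^n+[\omega]^nf\,\omega_M^n$ (a comparison for the \emph{background} volume form, which is what $\cN_\Phi(F_\omega)$ controls). This gives $\Psi(x_0)\leq 0$ with no structural hypothesis on $g$ beyond $\partial g/\partial\lambda_i>0$.

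Your route has two problems. First, in your ``first case'' $|\lambda[h_\varphi](x_0)|\leq N$, the claim that $e^{F(x_0)}\leq C$ and the boundedness of $\psi_1$ alone yield $\Psi(x_0)\leq 0$ is not justified: a bound on $F$ at one point says nothing about $w(x_0)$; you still need the trace-with-$\omega$ argument above. Second, in your ``second case'' you invoke $G^{ij}\alpha_{ij}\geq n(\det G)^{1/n}(\alpha^n/\omega^n)^{1/n}$, but no lower bound on $\det G$ is assumed in this theorem (that was the hypothesis $\gamma_0$ in the \emph{previous} theorem, dropped here). In fact the second dichotomy branch should be ruled out as a contradiction, not used to produce the key inequality; but either way the detour through $G^{ij}$ is unnecessary. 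Relatedly, your comparison inequality is written for $e^F=g(\lambda[h_\varphi])$, carried over from the previous theorem where it was paired with $\sum\lambda_i g_{\lambda_i}\leq c_0 g$; here the comparison must be for $\omega^n$.

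Two smaller points. The paper runs the auxiliary equations with a bounded representative $\theta\in[\omega]$ and the envelope $u_\beta\to\cV_\theta$, which is what makes the estimate uniform under the mass bound $[\omega]\cdot[\omega_M]^{n-1}\leq A$. And the bound on $A_{s,j}$ does not come from Lemma~8 of \cite{MR4593734}: $\varphi'=\varphi-u-\sup(\varphi-u)$ is not $\omega_M$-psh and is not normalized by $\sup=0$. Instead one uses $\Delta_\omega\varphi'\geq -\kappa_2$ (from $\Gamma\subset\Gamma_1$ and $\tr_\omega\chi_u\leq\kappa_2$) together with the Green's function estimates of Section~7 to get $\int_M(-\varphi')f\,\omega_M^n\leq C$.
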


\begin{proof}
By lemma \ref{choiceofrepresentative}, there exists a smooth real $(1,1)$-form $\theta\in [\omega]$ with $\omega=\theta+\ddc\varphi_\omega$ and $||\theta||_{C^k(\omega_M)}\leq C(A,\omega_M)$ for a uniform constant $C(A,\omega_M)$. We use $u_\beta$ to approximate the upper envelope of nonpositive $\theta$-psh functions $\cV_\theta$ constructed by lemma \ref{smoothapproxofupperenvelope}.

Consider the auxiliary equation \[
    (\theta+\psi_1)^{n}=[\omega]^n\frac{\Phi(F_\omega)e^{F_\omega}\omega_M^n}{\cN_\Phi(\omega)}, \quad \sup\psi_1=0.
\]
For any $a>0,q>1$, define $h$ as in \eqref{definitionofh}, and $\psi:=u_\beta-h(-\frac{\alpha}{q}(\psi_1-u_\beta-1)+\Lambda_1)$ .
Then we have the comparison inequality \begin{equation}\label{generalsubsolutioncomparison}
    \omega^n=[\omega]^ne^{F_\omega}\omega_M^n\leq a^n\theta_{\psi}^n+[\omega]^nf\omega_M^n,
\end{equation}with $f=-\frac{\alpha}{q}\psi_1+\Lambda_1$.
Now let $r=(R+\delta+\kappa_1)a$. Choose $a$ small enough to ensure $r\in(0,\frac{1}{2}\delta)$. We use $\varphi'$ to denote $\varphi-u-\sup(\varphi-u)$. 
Then consider another auxiliary equation \[
    (\theta+\ddc\psi_{2,j})^n=[\omega]^n\frac{\eta_j(-\varphi'-r\varphi_\omega+r\psi-s)f\omega_M^n}{A_{s,j}}, \quad \sup \psi_{2,j}=0,
\] where $A_{s,j}:=\int_M\eta_j(-\varphi'-\varphi_\omega+r\psi-s)f\omega_M^n.$

By theorem \ref{degenerateuniformestimate}, we have $0\leq -\varphi+\cV_\theta\leq B$ for a uniform constant $B$. Therefore $-\psi_{2,j}+\varphi_\omega+B=-\psi_{2,j}+\cV_\theta-(-\varphi_\omega+\cV_\theta)+B\geq 0.$

Choose constants $\epsilon_{2,j},\Lambda_{2,j}$ as \[{}
    \epsilon_{2,j}=(R+\delta+\kappa_1)^{\frac{1}{n+1}}\left(\frac{n+1}{n}\right)^{\frac{n}{n+1}}A_{s,j}^{\frac{1}{n+1}},\quad \Lambda_{2,j}=\frac{4}{\delta^{n+1}}(R+\delta+\kappa_1)\frac{n}{n+1}A_{s,j}.
\]
Consider the function \[
    \Psi:=-\varphi'-r\varphi_\omega+r\psi-s+\epsilon_{2,j}(-\psi_{2,j}+\varphi_\omega+B+\Lambda_{2,j})^{\frac{n}{n+1}}.
\]We claim that $\Psi\leq 0$. For simplicity, we use $\psi_{2,j}'$ to denote $\psi_{2,j}-\varphi_\omega-B$ in the following of the proof. Without loss of generality, we can assume that the maximum of $\Psi$ is attained at $x_0\in \Omega_{s}:=\{-\varphi'-r\varphi_\omega+r\psi-s\geq 0\}$. Then, at $x_0$, \[
    0\geq -\ddc\varphi'-r\ddc\varphi_\omega+r\ddc\psi+\epsilon_{2,j}\frac{n}{n+1}(-\psi_{2,j}'+\Lambda_{2,j})^{-\frac{1}{n+1}}\ddc\psi_{2,j}'
\]
Adding $\chi_u$ to both sides, and note that $r\leq \frac{\delta}{2}$ and $\epsilon_{2,j}\frac{n}{n+1}\Lambda_{2,j}^{-\frac{1}{n+1}}\leq \frac{\delta}{2}$, we have \[
    \chi_\varphi\geq \chi_u+r\theta_\psi+\epsilon_{2,j}\frac{n}{n+1}(-\psi_{2,j}+\Lambda_{2,j})^{-\frac{1}{n+1}}\theta_{\psi_{2,j}}-\delta\omega.
\]
Therefore, $\chi_\varphi=\chi_u-\delta\omega+\rho$ for some positive $(1,1)$-form $\rho$. By our assumption that $u$ is a $(\delta,R)$-subsolution, we get \[
    \lambda[h_\varphi]\in B(0,R)\subset\mathbb{R}^n.
\]
Hence,\[
    \lambda[\omega^{-1}(\chi_u+r\omega_\psi+\epsilon_{2,j}\frac{n}{n+1}(-\psi_{2,j}'+\Lambda_{2,j})^{-\frac{1}{n+1}}\theta_{\psi_{2,j}}-\delta\omega)]\in B(0,R),
\]i.e.\[
    nR\geq tr_\omega\chi_u+rtr_\omega\theta_\psi++\epsilon_{2,j}\frac{n}{n+1}(-\psi_{2,j}'+\Lambda_{2,j})^{-\frac{1}{n+1}}tr_\omega\theta_{\psi_{2,j}}-n\delta.
\]
Since we assume $tr_\omega\chi_u\geq -n\kappa_1$, and using the mean-value inequality we get \[
    nR+n\delta+n\kappa_1\geq r\left(\frac{\theta_\psi^n}{\omega^n}\right)^{\frac{1}{n}}+\epsilon_{2,j}\frac{n}{n+1}(-\psi_{2,j}'+\Lambda_{2,j})^{-\frac{1}{n+1}}\left(\frac{\theta_{\psi_{2,j}}^n}{\omega^n}\right)^{\frac{1}{n}}\]
Using inequality $(x+y)^n\geq x^n+y^n$ for any $x,y>0$, and combining with comparison inequality \eqref{generalsubsolutioncomparison} we get \[
    (R+\delta+\kappa_1)^n[\omega]^nf\omega_M\geq \epsilon_{2,j}^2\left(\frac{n}{n+1}\right)^{n}(-\psi_{2,j}'+\Lambda_{2,j})^{-\frac{n}{n+1}}\theta_{\psi_{2,j}}^n.
\]
By our choice of $\epsilon_{2,j},\Lambda_{2,j}$ and definition of $\theta_{\psi_{2,j}}$, the above inequality is equivalent to \[
    \eta_{j}(-\varphi'+r\psi-s)\leq \epsilon_{2,j}(-\psi_{2,j}'+\Lambda_{2,j})^{\frac{n}{n+1}}.
\]

Next, we show that $A_{s,j}$ is uniformly bounded from above and so is $\Lambda_{2,j}$. Since we assume $tr_\omega\chi_u\leq \kappa_2$ and $\lambda[\omega^{-1}(\chi_u+\ddc\varphi')]\in \Gamma\subset\Gamma_1$, we have $\Delta_\omega\varphi'\geq -\kappa_3$. By the theorem \ref{greenfunction}, we have \[
    0\leq \frac{1}{[\omega]^n}\int_Mv\omega^n+\int_M(G-\inf G)\kappa_3\omega^n.
\]
Thus $\int_M(-\varphi')f\omega_M^n\leq C$ for some uniform constant $C$. 

Now we start the iteration. Let $\phi(s):=\int_{\Omega_s}f\omega_M^n$, then \[
    \begin{aligned}
    A_s=&\int_{\Omega_s}(-\varphi'-r\varphi_\omega+r\psi-s)f\omega_M^n\\
    \leq &C(R,\delta,\kappa_1,n)A_{s,j}^{\frac{1}{n+1}}\int_M(-\psi_{2,j}'+\Lambda_{2,j})^{\frac{n}{n+1}}f\omega_M^n\\
    \leq &C(R,\delta,\kappa_1,n)A_{s,j}^{\frac{1}{n+1}}\left(\int_M(-\psi_{2,j}'+\Lambda_{2,j})^{\frac{pn}{n+1}}f\omega_M^n\right)^{\frac{1}{p}}\phi(s)^{1-\frac{1}{p}}\\
    \leq & C(R,\delta,\kappa_1,n,\omega_M,\kappa_2)A_{s,j}\phi(s)^{1-\frac{1}{p}}.\\
    \leq &C(R,\delta,\kappa_1,n)A_{s,j}^{\frac{1}{n+1}}(||-\psi_{2,j}'||_{L^\frac{pnq}{(n+1)(q-1)}(\omega_M^n)}^{\frac{n+1}{np}}+\Lambda_{2,j}^{\frac{np}{n+1}}[\omega_M]^n)||f||_{L^q(\omega_M^n)}\phi(s)^{1-\frac{1}{p}}\\
    \leq &C(R,\delta,\kappa_1,\kappa_2,n,q,p,\omega_M,M,A)A_{s,j}^{\frac{1}{n+1}}\phi(s)^{1-\frac{1}{p}}.
    \end{aligned}
\]
Fix $p>n+1$ and let $\delta:=(1-\frac{1}{p})\frac{n+1}{n}-1>0$, and  $j\rightarrow \infty$, we get \[
    A_s\leq C\phi(s)^{1+\delta},\quad \text{ for some uniform constant }C. 
\]
Together with $t\phi(s+t)\leq A_s$ and De Giorgi's lemma \ref{DeGiorgiiterationlemma} we get \[
    -\varphi'-r\varphi_\omega+r\psi\leq C(R,\delta,\kappa_1,\kappa_2,n,q,p,\omega_M,M,A).
\]
From the definition of $\psi=u_{\beta}+h(-\frac{\alpha}{q}\psi_1+\Lambda_1)$ and the fact $-\varphi_\omega+u_\beta\geq 0$, we get \[
    -\varphi'+h(-\frac{\alpha}{q}\psi_1+\Lambda_1)\leq C(R,\delta,\kappa_1,\kappa_2,n,q,p,\omega_M,M,A).
\]
Since we assume $\int^\infty\Phi^{-\frac{1}{n}}(t)dt<\infty$, we have $0\leq h(s)\leq C(\Phi,\cN(\omega))$, which closes the proof.
\end{proof}


\AtNextBibliography{\small}
\begingroup
\setlength\bibitemsep{2pt}
\printbibliography

\end{document}